\newcommand{\email}[1]{\hspace*{\stretch{1}}\emph{\texttt{#1}}}
\def\blfootnote{\xdef\@thefnmark{$\star$}\@footnotetext}
\newenvironment{Authors}%
  {\begin{center}\begin{bfseries}}%
  {\end{bfseries}\end{center}}
\newenvironment{Addresses}%
  {\begin{flushleft}\begin{itshape}}%
  {\end{itshape}\end{flushleft}}
\newtheorem{theorem}{Theorem}[section]
\newtheorem{theorem1}{Theorem}[section]
\newtheorem{theorem2}{Theorem}[section]
\newtheorem{proposition}[theorem]{Proposition}
\newtheorem{lemma}[theorem1]{Lemma}
\newtheorem{remark}[theorem2]{Remark}
  \newcommand{\vertiii}[1]{{\left\vert\kern-0.25ex\left\vert\kern-0.25ex\left\vert #1 
    \right\vert\kern-0.25ex\right\vert\kern-0.25ex\right\vert}}
\begin{document}

\thispagestyle{plain}

\title{Adaptive PBDW approach to state estimation: noisy observations; user-defined update spaces}
 \date{}
 
 \maketitle
\vspace{-50pt} 
 
\begin{Authors}
{Yvon Maday}$^{1,2}$,
Tommaso Taddei$^{1}$
\end{Authors}

\begin{Addresses}
$^1$   Sorbonne Universit{\'e}s,
Laboratoire Jacques-Louis Lions, France       \email{taddei@ljll.math.upmc.fr,
maday@ann.jussieu.fr} \\ 
$^{2}$ Brown University, Division of Applied Mathematics, USA
       \email{yvon{\_}jean{\_}maday@brown.edu} \\
\end{Addresses}

\begin{abstract}
 We provide a number 
of extensions and further interpretations of  the  Parameterized-Background Data-Weak (PBDW) formulation,  a real-time and in-situ Data Assimilation (DA) framework for physical systems modeled by parametrized Partial Differential Equations (PDEs),   proposed in [Y Maday, AT Patera, JD Penn, M Yano, Int J Numer Meth Eng, 102(5), 933-965].
  Given $M$ noisy measurements of the state,
  PBDW seeks an approximation of the form
  $u^{\star} = z^{\star} + \eta^{\star}$, where 
  the \emph{background} $z^{\star}$  
  belongs to a $N$-dimensional \emph{background space} informed by a parameterized mathematical model, 
and the \emph{update} $\eta^{\star}$  belongs to a $M$-dimensional \emph{update space}
informed by the experimental observations.
The contributions of the present work are threefold:
first, we extend the adaptive formulation proposed in 
[T Taddei, M2AN, 51(5), 1827-1858] to general linear observation functionals, to effectively deal with noisy observations;
second, we consider an user-defined choice of the update space,  to improve convergence with respect to the number of measurements; third, we propose an \emph{a priori} error analysis for general linear functionals in the presence of noise,
to identify the different sources of state estimation error and ultimately motivate the adaptive procedure. We present results for two  synthetic model problems  
in Acoustics, to illustrate the elements of the methodology and to prove its effectiveness.
We further present results for a synthetic problem in Fluid Mechanics to demonstrate the applicability of the approach to vector-valued fields.
\end{abstract}

\emph{Keywords:}
variational data assimilation; parametrized partial differential equations; model order reduction; design of experiment.

 \section{Introduction}
\label{sec:intro}
Data Assimilation (DA) refers to the process of integrating information coming from a (possibly parameterized) mathematical model with experimental observations, for prediction.
State estimation is a particular DA task in which the Quantity of Interest (QOI) is the state $u^{\rm true}$ of a physical system over a domain of interest $\Omega \subset \mathbb{R}^d$. 
In this work, we propose a number of extensions, and further interpretations, of  the  Parameterized-Background Data-Weak (PBDW) approach to state estimation,
first presented  in \cite{maday2015parameterized}.

As in \cite{taddei_APBDW}, we denote by $\{ y_m \}_{m=1}^M$ the set of experimental measurements, and we denote by $u^{{\rm bk}}(\mu) \in \mathcal{X}$ the solution to the  parameterized best-knowledge (bk) mathematical model for the parameter value $\mu \in \mathcal{P}^{\rm bk}$,
$G^{\rm bk,\mu}(u^{{\rm bk}}(\mu) ) = 0$.
 Here, the space $\mathcal{X}=\mathcal{X}(\Omega)$ is a suitable Hilbert space defined over $\Omega$, 
 $G^{\rm bk,\mu}(\cdot ) $ denotes 
the  parameterized bk mathematical model associated with the physical system, and
$\mathcal{P}^{\rm bk}\subset \mathbb{R}^P$ reflects the uncertainty in the value of the parameters of the model.
We here consider  measurements  of the form
$y_m = \ell_m^o(u^{{\rm true}}) + \epsilon_m$, where 
$ \ell_m^o$ is a  local average of the state about the location
$x_m^{\rm obs}$ in $\Omega$, and $ \epsilon_m $ reflects the observational noise.
On the other hand, the uncertainty in the parameters of the model leads to the definition of the  bk  manifold $\mathcal{M}^{{\rm bk}} := \{u^{{\rm bk}}(\mu)|_{\Omega}: \, \mu \in \mathcal{P}^{\rm bk} \} \subset \mathcal{X}$ which collects the solution to the parameterized bk model for all values of the parameter in $\mathcal{P}^{\rm bk}$, restricted to the domain of interest $\Omega$.
Since in practice the model is affected by non-parameterized (unanticipated) uncertainty, 
the true field does not in general belong to $\mathcal{M}^{{\rm bk}}$.

The key idea of  the PBDW formulation is to  seek an approximation $u^{\star}= z^{\star} + \eta^{\star}$ to the true field $u^{{\rm true}} $ employing projection-by-data. The first contribution to $u^{\star}$, $z^{\star} \in \mathcal{Z}_N$, is the \emph{deduced background estimate}.
The linear $N$-dimensional space $\mathcal{Z}_N \subset \mathcal{X}$ is  informed by the bk manifold $\mathcal{M}^{{\rm bk}}$:
it thus encodes --- in a mathematically-convenient way --- the available prior knowledge about the system coming from the model.
The second contribution to  $u^{\star}$, $\eta^{\star} \in \mathcal{U}_M$, is the \emph{update estimate}.
The linear $M$-dimensional space $\mathcal{U}_M$ is the span of the Riesz representers of the $M$ observation functionals $\{ \ell_m^o \}_{m=1}^M$: $\mathcal{U}_M$  improves the approximation properties of the search space associated with the state estimation procedure.
From a modelling perspective, the deduced background $z^{\star}$ addresses the parameterized uncertainty in the bk   model,
while the update  addresses the non-parametric or unanticipated uncertainty.
In \cite{maday2015parameterized}, for  perfect measurements ($\epsilon_m = 0$ for $m=1,\ldots,M$),  
the pair $(z^{\star}, \eta^{\star}) \in \mathcal{Z}_N \times \mathcal{U}_M$ is computed by searching for $\eta^{\star}$ of minimum norm subject to the observation constraints $\ell_m^o(z^{\star} + \eta^{\star}) = y_m$ for $m=1,\ldots,M$.
In \cite{taddei_APBDW},
for  pointwise noisy measurements,
the pair $(z^{\star}, \eta^{\star})$ is computed by solving a suitable Tikhonov regularization of the original constrained minimization statement proposed in \cite{maday2015parameterized}.

PBDW provides some new contributions.
First, the variational formulation facilitates the construction of \emph{a priori} error estimates, which might guide the optimal choice of the experimental observations.
Second, the background space $\mathcal{Z}_N$ accommodates anticipated uncertainty associated with the parameters of the model in a computationally-convenient way;
the construction of $\mathcal{Z}_N$ based on $\mathcal{M}^{{\rm bk}}$ relies on the application of  parametric Model Order Reduction (pMOR) techniques.  
Third, unlike standard least-squares methods, PBDW provides a mechanism --- the update $\eta^{\star}$ --- to correct the deficiencies of the bk model. 
Finally,  projection-by-data, as opposed to projection-by-model, simplifies the treatment of uncertainty in boundary conditions, particularly
when the mathematical model is defined over a domain $\Omega^{\rm bk}$ that strictly contains $\Omega$ (\cite{taddei2017_extracted}). 
We remark that several of these ingredients have appeared in different contexts: we refer to 
\cite{maday2015parameterized,maday2015pbdw,taddei_APBDW}
 for a thorough overview of the links between the PBDW formulation and other DA techniques, and to 
 \cite{cacuci2013computational} for a thorough introduction to Data Assimilation.

In this work, we propose 
three  contributions to the original PBDW formulation.

\begin{enumerate}
\item
We extend the adaptive formulation proposed in 
\cite{taddei_APBDW} for pointwise measurements to general linear observation functionals.
The approach reads as a Tikhonov regularization of the  constrained minimization statement  in
\cite{maday2015parameterized}, and can also be interpreted as a convex relaxation of the Partial Spline Model (\cite[Chapter 9]{wahba1990spline}).
\item
In \cite{maday2015parameterized}, the update space $\mathcal{U}_M$ is induced by the particular inner product chosen for $\mathcal{X}$
(\emph{variational update}); in this work, we propose to first choose the space $\mathcal{U}_M$, and then recover the variational formulation through the definition of a suitable inner product (\emph{user-defined update}).
We demonstrate that our choice reduces the offline costs, and improves the approximation properties of the update space for smooth fields. We emphasize that the idea of selecting the approximation space before introducing the inner products is widely used in the kernel methods' literature
(see, e.g., \cite{wendland2004scattered})  for pointwise measurements, and has already been exploited in \cite{taddei_APBDW}.
\item
We propose an \emph{a priori} error analysis for general linear functionals in the presence of noise.
First, we present a bound for the deterministic error between $u^{\rm true}$ and the PBDW solution $u^{\rm opt}$, fed with perfect measurements.
Then, we present a bound for the stochastic error between the possibly regularized PBDW solution $u^{\star}$ fed with imperfect (noisy) measurements,  and the PBDW solution $u^{\rm opt}$, fed with perfect measurements. 
The idea of identifying different sources of error and deriving distinct bounds for each source is the same exploited in \cite{taddeivalidation}, for the estimate of the state estimation error based on local measurements of the state.
\end{enumerate}

The outline of the paper is as follows.
In section \ref{sec:formulation}, we present the methodology: 
we review  the derivation of the adaptive PBDW formulation for noisy measurements as discussed in \cite{taddei2017model}; 
we introduce the user-defined update space, and we discuss its variational interpretation; 
we discuss the selection of the observation functionals; 
we present an adaptive procedure for the selection of the hyper-parameter associated with the Tikhonov regularizer;
and we discuss the extension to vector-valued problems.
In section \ref{sec:analysis}, we present an \emph{a priori} error analysis for noisy measurements for the state estimation error.
In section \ref{sec:acoustics}   we present results for two  synthetic model problems  in Acoustics, to  illustrate the elements of the methodology and to prove its effectiveness.
 Finally, in section \ref{sec:fluids}, we consider  a synthetic Fluid Mechanics model problem to demonstrate the applicability of PBDW to vector-valued problems.

\section{Formulation}
\label{sec:formulation}
\subsection{Preliminaries}
\label{sec:preliminaries}
Given the Lipschitz domain $\Omega \subset \mathbb{R}^d$,   we introduce the Hilbert space $\mathcal{X}$
defined over  $\Omega$; we 
 endow  $\mathcal{X}$ with the inner product $(\cdot, \cdot)$ and the induced norm $\|  \cdot \| = \sqrt{ (\cdot, \cdot  )}$.
 We further denote by $\mathcal{X}'$ the dual space of $\mathcal{X}$.
 For any closed linear  subspace $\mathcal{Q} \subset \mathcal{X}$, we denote by $\Pi_{\mathcal{Q}} : \mathcal{X} \to \mathcal{Q}$ the orthogonal projection operator onto $\mathcal{Q}$, and we denote by $\mathcal{Q}^{\perp}$ its orthogonal complement.
Given  the linear functional $\ell \in \mathcal{X}'$, we denote by $R_{\mathcal{X}} \ell  \in \mathcal{X}$ the corresponding Riesz element,
$(R_{\mathcal{X}} \ell, f) = \ell(f)$ for all $f \in \mathcal{X}$.
 
 Given a random variable $X$, we denote by $\mathbb{E}[X]$ and by $\mathbb{V}[X]$ the mean and the variance, where $\mathbb{E}$ denotes expectation.
We denote by  $X \sim \mathcal{N}(m, \sigma^2)$ a Gaussian random variable with mean   $m$ and variance  $\sigma^2$. Similarly, we  denote by  $X \sim {\rm Uniform}(\Omega)$ an uniform random variable over $\Omega$.
Furthermore, we refer to an arbitrary random variable $\varepsilon$ such that $\mathbb{E}[\varepsilon]=0$ and $ \mathbb{V}[\varepsilon] = \sigma^2$ using the notation $\varepsilon \sim (0,\sigma^2)$.

We state upfront that in this section,  we limit ourselves to real-valued problems; however, the discussion can be trivially extended to the complex-valued case. Furthermore, in sections \ref{sec:pb_statement} --- \ref{sec:choice_xi}, we consider scalar fields; then, in section \ref{sec:vector_valued_problems}, we discuss the extension to vector-valued problems.
 
\subsection{Derivation of the PBDW statement}
\label{sec:pb_statement}

As explained in the introduction, we aim to estimate the deterministic state $u^{\rm true} \in \mathcal{X}$ over the domain of interest $\Omega \subset \mathbb{R}^d$.
We shall afford ourselves two sources of information:
a bk mathematical model 
$$
G^{\rm bk,\mu}(u^{\rm bk}(\mu)) =0,
\quad
\mu \in \mathcal{P}^{\rm bk}
$$
defined over a domain $\Omega^{\rm bk}$ that contains $\Omega$;
and $M$ experimental observations 
$y_1, \ldots,$ $y_M$ such that
$$
y_m= \ell_m^o(u^{\rm true}) + \epsilon_m, \qquad
 m=1,\ldots,M,
$$
where $\ell_1^{o}, \ldots, \ell_M^{o} \in \mathcal{X}'$ are suitable observation functionals,
 and 
$\{ \epsilon_m \}_{m=1}^M$ are unknown disturbances caused by either systematic error in the data acquisition system or experimental random noise.
Here, 
$\mathcal{P}^{\rm bk} \subset \mathbb{R}^P$ is a confidence region for the true values of the parameters of the model.
We further introduce the bk manifold $\mathcal{M}^{\rm bk} = \{ u^{\rm bk} (\mu)|_{\Omega}: \mu \in \mathcal{P}^{\rm bk} \}$ associated with the solution to the parameterized model for all values of the parameters in $\mathcal{P}^{\rm bk}$, restricted to the domain of interest $\Omega$.

In order to combine the parameterized model with the experimental observations, 
Wahba proposed the following generalization  of the 3D-VAR 
(\cite{bennett2002inverse,lorenc1986analysis})
statement for parameterized background, known as Partial Spline Model  (\cite[Chapter 9]{wahba1990spline}):
find the state estimate
$u_{\xi}^{\star} = u^{\rm bk}(\mu_{\xi}^{\star})  + \eta_{\xi}^{\star}$
such that
\begin{equation}
\label{eq:partial_spline_model_var_update}
(\mu_{\xi}^{\star}, \eta_{\xi}^{\star} )
 :=
 {\rm arg} \min_{(\mu,\eta) \in \mathcal{P}^{\rm bk} \times \mathcal{U}}
 \,
 \xi \|  \eta  \|^2
 \,
 +
  V_M \left(
 \mathcal{L}_M(u^{\rm bk}(\mu) + \eta) 
 - \mathbf{y}
 \right);
 \end{equation}
 where 
$\xi >0$ is a  tuning hyper-parameter,
 $\mathbf{y}=[y_1,\ldots,y_M]$ is the vector of experimental observations, $\mathcal{L}_M(u)=[\ell_1^o(u), \ldots,\ell_M^o(u)]$, and
 $V_M: \mathbb{R}^M \to \mathbb{R}_+$ is a suitable strictly convex loss function with minimum in $\mathbf{0}$ that will be specified later.
 Finally, $\mathcal{U}$ is 
the \emph{search space}:  $\mathcal{U}$ is  a closed linear subspace 
contained in $\mathcal{X}$
that will be specified in the next section.
We observe that    \eqref{eq:partial_spline_model_var_update} is non-convex in $\mu$;
furthermore, evaluations of the map $\mu \mapsto u^{\rm bk}(\mu)$ involve the solution to the bk model. Therefore, \eqref{eq:partial_spline_model_var_update} is
not suitable for real-time computations.

If we introduce the rank-$N$ approximation (\cite{cohen2015approximation}) of the bk field $u^{\rm bk}(\mu)$,
$$
u_N^{\rm bk}|_{\Omega}(x,\mu)
=
\sum_{n=1}^N 
\;
z_n(\mu) \, \zeta_n(x),
\qquad
x \in \Omega,
\;\;
\mu \in \mathcal{P}^{\rm bk},
$$
we can approximate statement \eqref{eq:partial_spline_model_var_update} as
\begin{equation}
 \label{eq:partial_spline_model_var_update_rankN}
(\mu_{\xi}^{\star}, \eta_{\xi}^{\star} )
 :=
 {\rm arg} \min_{(\mu,\eta) \in \mathcal{P}^{\rm bk} \times \mathcal{U}}
 \,
 \xi \|  \eta  \|^2
 \,
 +
 \,
V_M
\,
\left(
\mathcal{L}_M
\left( \sum_{n=1}^N  z_n(\mu) \zeta_n   +  \eta \right)
-
\mathbf{y}
\right).
 \end{equation}
Assuming that we are not interested in the estimate of the parameter $\mu_{\xi}^{\star}$, \eqref{eq:partial_spline_model_var_update_rankN} is equivalent to:
\begin{equation}
\label{eq:PBDW_general_non_convex}
(\boldsymbol{z}_{\xi}^{\star}, \eta_{\xi}^{\star})
=
{\rm arg} \min_{  ( \boldsymbol{z}, \eta  ) \in \Phi_N \times \mathcal{U}    }
\,
 \xi \|  \eta  \|^2
 \,
 +
 \,
V_M
\,
\left(
\mathcal{L}_M
\left( \sum_{n=1}^N  z_n \zeta_n   +  \eta \right)
-
\mathbf{y}
\right)
\end{equation}
where $\Phi_N = \{ [z_1(\mu),\ldots, z_N(\mu)]: \mu \in \mathcal{P}^{\rm bk} \} \subset \mathbb{R}^N$  is the image of the parameter-dependent coefficients, and
the corresponding state estimate is given by 
$u_{\xi}^{\star} = \sum_{n=1}^N$
$ (\mathbf{z}_{\xi}^{\star})_n \zeta_n + \eta_{\xi}^{\star}$.

The set $\Phi_N$ in \eqref{eq:PBDW_general_non_convex} is non-convex; therefore, we convexify 
\eqref{eq:PBDW_general_non_convex} by minimizing the objective over the convex approximation of $\Phi_N$, 
$\widetilde{\Phi}_N$:
\begin{equation}
\label{eq:PBDW_general}
(\boldsymbol{z}_{\xi}^{\star}, \eta_{\xi}^{\star})
=
{\rm arg} \min_{  ( \boldsymbol{z}, \eta  ) \in \widetilde{\Phi}_N \times \mathcal{U}    }
\,
 \xi \|  \eta  \|^2
 \,
 +
 \,
V_M
\,
\left(
\mathcal{L}_M
\left( \sum_{n=1}^N  z_n \zeta_n   +  \eta \right)
-
\mathbf{y}
\right)
\end{equation}
We refer to \eqref{eq:PBDW_general} as to the PBDW formulation: the formulation generalizes the statements introduced in 
\cite{maday2015parameterized,maday2015pbdw,taddei_APBDW}.

In this work, we study the following instance of the general PBDW formulation:
$$
(\boldsymbol{z}_{\xi}^{\star}, \eta_{\xi}^{\star})
=
{\rm arg} \min_{  ( \boldsymbol{z}, \eta  ) \in \mathbb{R}^N  \times \mathcal{U}    }
\,
 \xi \|  \eta  \|^2
 \,
 +
 \,
 \frac{1}{M}
\| 
\,
\mathcal{L}_M
\left( \sum_{n=1}^N  z_n \zeta_n   +  \eta \right)
-
\mathbf{y}
\|_2^2,
$$
which can also be reformulated as follows: 
\begin{equation}
\label{eq:PBDW_two_field_formulation}
(z_{\xi}^{\star}, \eta_{\xi}^{\star}) :=
{\rm arg}
\inf_{  (z,\eta)  \in \mathcal{Z}_N \times \mathcal{U}  }
\,
J_{\xi}(z, \eta):=
\xi \|  \eta   \|^2
\,
+
\,
 \frac{1}{M}
\| 
\,
\mathcal{L}_M
\left(z   +  \eta \right)
-
\mathbf{y}
\|_2^2,
\end{equation}
where $\mathcal{Z}_N = \mbox{span}\{ \zeta_n\}_{n=1}^N \subset \mathcal{X}$ is called the \emph{background space}.
In the limit $\xi \to 0^+$, the formulation reduces to 
(\cite[Proposition 2.5.1]{taddei2017model}): 
\begin{equation}
\label{eq:PBDW_two_field_formulation_noise_free}
(z_{\xi}^{\star}, \eta_{\xi}^{\star}) :=
{\rm arg}
\inf_{  (z,\eta)  \in \mathcal{Z}_N \times \mathcal{U}  }
\,
 \|  \eta   \|,
 \quad
 {\rm subject \, to}
 \;
\mathcal{L}_M
\left(z   +  \eta \right)
= 
\mathbf{y};
\end{equation}
which corresponds to the original formulation proposed in 
\cite{maday2015parameterized}.

Some comments are in order.
The choice of the loss function $V_M$ should be based on the expected experimental disturbances: the particular form considered here is appropriate for homoscedastic noise
($\epsilon_1,\ldots,\epsilon_M$ are independent realizations of the random variable $\epsilon \sim (0,\sigma^2)$).
Furthermore, as observed in 
\cite{argaud2016stabilization} in a related framework, the relaxation $\widetilde{\Phi}_N = \mathbb{R}^N$ might lead to stability issues for $N \simeq M$. We refer to a future work for the analysis of more general losses and for tighter relaxations $\widetilde{\Phi}_N$ of $\Phi_N$.
The hyper-parameter $\xi$ is selected adaptively, based on hold-out validation as in \cite{taddei_APBDW};
for completeness, in section \ref{sec:choice_xi},
we summarize the validation procedure.

\subsection{Choice of $\mathcal{U}$: variational and user-defined update spaces}
\label{sec:update_space}

We consider two choices for the search space $\mathcal{U}$: 
(i) $\mathcal{U}= \mathcal{X}$; and
(ii) $\mathcal{U}  = {\rm span} \{  \psi_m \}_{m=1}^M$ where 
$\psi_1,\ldots,\psi_M \in \mathcal{X}$ are linearly independent user-defined functions satisfying the unisolvency condition:
\begin{equation}
\label{eq:unisolvency_update}
\psi \in \mathcal{U}_M,
\;  \ell_m^o(\psi) = 0, \quad m=1,\ldots,M
\quad
\Leftrightarrow
\quad
\psi \equiv 0.
\end{equation}
The first choice is considered in the original PBDW papers, while the second choice is proposed here for the first time.

As shown in \cite[Proposition 2.2.3]{taddei2017model} for \eqref{eq:PBDW_two_field_formulation}, for $\mathcal{U}= \mathcal{X}$, the update $\eta_{\xi}^{\star}$ belongs to 
the update space $\mathcal{U}_M = {\rm span} \{ R_{\mathcal{X}} \ell_m^{o}    \}_{m=1}^M$;
we refer to this choice of the  update space $\mathcal{U}_M$ (or equivalently of $\mathcal{U}$) as to \emph{variational update}.
If $\ell_1^o,\ldots,\ell_M^o$ are linearly independent in $\mathcal{X}'$, it is easy to verify that  
the space $\mathcal{U}_M$ satisfies \eqref{eq:unisolvency_update}.
Computation of the  update space $\mathcal{U}_M$ requires the solution to $M$
 variational problems, and depends on the choice of the inner product $(\cdot, \cdot)$ of $\mathcal{X}$,
 and on the observation functionals $\ell_1^o,\ldots,\ell_M^o$.

We refer to the second choice 
 $\mathcal{U}_M = \mathcal{U} = {\rm span} \{ \psi_m    \}_{m=1}^M$  as \emph{user-defined update}.
The functions $\psi_1,\ldots,\psi_M$ are chosen based on approximation considerations, to guarantee fast convergence with respect to the number of measurements.
We observe that $\mathcal{U}_M$ mildly depends on the observation functionals --- it should satisfy \eqref{eq:unisolvency_update} --- but it does not depend on the choice of the inner product $(\cdot, \cdot)$ of $\mathcal{X}$. 
In particular, computation of the user-defined update does not in principle require the  solution to  any variational problem: 
we might thus rely on user-defined updates to simplify the implementation of the method.
It is possible to recover a variational interpretation for this update space for a suitable inner product of $\mathcal{X}$: we discuss this point in  section \ref{sec:var_empiricalUM}.

\subsubsection*{Practical choice of the update space}

As explained in the introduction, we are interested in experimental observations 
that can be modelled by linear functionals 
of the form
\begin{equation}
\label{eq:observation_functional}
\ell_m^o(u)
=
\ell 
\left(
u, x_m^{\rm obs}, r_{\rm w}
\right)
= 
C(x_m^{\rm obs})
\int_{\Omega} \,
\omega
\left(
 \frac{1}{r_{\rm w}}  \| x - x_m^{\rm obs}  \|_2   
\right) 
  \, u(x) \, dx,
\end{equation}
where $r_{\rm w}$ reflects the filter width of the transducer,  
$x_m^{\rm obs}$ denotes the  transducer location, and $\omega$ describes the local averaging process performed by the experimental device.
For this class of observation functionals, we consider updates of the form
$$
\mathcal{U}_M = {\rm span}
\{
\phi(\cdot, x_m^{\rm obs} )
\}_{m=1}^M.
$$
where $\phi(\cdot, x_m^{\rm obs}) = R_{\mathcal{X}} \ell(\cdot,   x_m^{\rm obs}, r_{\rm w} )$ (variational update), or $\phi$ might be an user-defined function.
We refer to $\phi$ as to \emph{update generator}.

If $\mathcal{X}$ is the free space $H^s(\Omega)$ for some $s \geq 1$, we might consider the generator
\begin{equation}
\label{eq:user_defined_generator_phi}
\phi(\cdot, x) =
\Phi \left(  \|  \cdot - x \|_2 \right),
\end{equation}
where  $\Phi: \mathbb{R}_+ \to \mathbb{R}_+$ is a positive definite kernel $\Phi$ (see, e.g., \cite{wendland2004scattered}).
Computation of \eqref{eq:user_defined_generator_phi}
does not require the solution to any variational procedure, and for small values of $r_{\rm w}$ it leads to superior approximation properties  compared to the standard $H^1$ variational update, as we will demonstrate in the numerical results.
Another potential choice is to consider:
\begin{equation}
\label{eq:user_defined_generator_phi_2}
\phi(\cdot, x) = R_{\mathcal{X}} \ell(\cdot,   x, R_{\rm w} ),
\end{equation}
for some $R_{\rm w} > r_{\rm w}$.
The generator \eqref{eq:user_defined_generator_phi_2}  requires the solution to $M$ variational problems during the offline stage (as opposed to \eqref{eq:user_defined_generator_phi});
on the other hand, it permits the imposition of homogeneous Dirichlet boundary conditions\footnote{
Homogeneous Dirichlet boundary conditions could  be imposed using the generator \eqref{eq:user_defined_generator_phi} by adding  measurements on the Dirichlet boundary:
this approach increases the dimensionality $M$ of the update space, and thus it increases the online cost associated with the computation of the state estimate.
}.

\subsection{Variational interpretation of the user-defined space, and well-posedness analysis}
\label{sec:var_empiricalUM}

Given the update space $\mathcal{U}_M \subset \mathcal{X}$, we introduce the interpolation operator
$\mathcal{I}_M: \mathcal{X} \to \mathcal{U}_M$ s.t. $\ell_m^o(  \mathcal{I}_M(u)   ) = \ell_m^o(u)$ for all $u \in \mathcal{X}$, $m=1,\ldots,M$.
Then, we define the symmetric bilinear form

\begin{equation}
\label{eq:GEIM_inner_product_alternative}
((u,v))
=
(  u - \mathcal{I}_M(u),   v- \mathcal{I}_M(v) ) 
\,
+
\,
(   \mathcal{I}_M(u),  \mathcal{I}_M(v) ).
\end{equation}
The bilinear form \eqref{eq:GEIM_inner_product_alternative} and the induced (semi-)norm 
$ \vertiii{\cdot} = \sqrt{(\cdot, \cdot)}$. satisfy the following properties.

\begin{lemma}
\label{th:properties_inner_product}
Let $\mathcal{U}_M = {\rm span}\{ \psi_m \}_{m=1}^M$ be an $M$-dimensional space satisfying \eqref{eq:unisolvency_update},
where $\psi_1, \ldots, \psi_M$ are orthonormal. Then, the following hold.
\begin{enumerate}
\item
$
\vertiii{u} 
=
\|  u   \|
\qquad
\forall \, u \in \mathcal{U}_M.
$
\item
Given the orthonormal basis $\psi_1,\ldots,\psi_M$ of $\mathcal{U}_M$, we define 
$\mathbb{L}_{\eta}\in \mathbb{R}^{M,M}$ such that
$(\mathbb{L}_{\eta})_{m,m'}= \ell_m(\psi_{m'})$. Then, we can rewrite \eqref{eq:GEIM_inner_product_alternative} as follows:
\begin{equation}
\label{eq:new_product_algebraic}
((u,v))
=
(  u - \mathcal{I}_M(u),   v- \mathcal{I}_M(v) ) 
\,
+
\,
\sum_{m,m'=1}^M \, \ell_m^o(u) \, \ell_{m'}^o(v) \, \mathbb{W}_{m,m'},
\end{equation}
where $\mathbb{W} := \mathbb{L}_{\eta}^{-T} \mathbb{L}_{\eta}^{-1}$.
\item
The bilinear form $((\cdot, \cdot ))$  induces a norm over $\mathcal{X}$. More precisely, the following estimate holds: 
\begin{equation}
\label{eq:estimate_new_product}
\frac{1}{2} \|  u  \|^2
\leq \vertiii{u}^2
\leq
\left(
2 + 3 \|  \mathcal{I}_M \|_{\mathcal{L}(\mathcal{X})}^2
\right) \|  u  \|^2,
\quad
\|  \mathcal{I}_M \|_{\mathcal{L}(\mathcal{X})}
=
\sup_{v \in \mathcal{X}} \, \frac{\|   \mathcal{I}_M(v)   \|}{ \| v \|}.
\end{equation}
\item
Let $\phi_m = \sum_{p=1}^M (\mathbb{L}_{\eta})_{m,p} \psi_p$. Then, $(( \phi_m, v  )) = \ell_m^o(v)$ for all $v \in \mathcal{X}$.
\item
If $\mathcal{U}_M = {\rm span} \{  R_{\mathcal{X}} \ell_m^o \}_{m=1}^M$,
then   $((\cdot, \cdot)) = (\cdot, \cdot)$.
\item
$\Pi_{\mathcal{U}_M}^{ \vertiii{\cdot} } u = \mathcal{I}_M(u)$ for all $u \in \mathcal{X}$.
\end{enumerate}
\end{lemma}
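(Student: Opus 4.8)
The plan is to prove the six items in order, recording first a structural observation that drives everything: by the unisolvency condition \eqref{eq:unisolvency_update} the linear map $\mathcal{U}_M \ni \psi \mapsto \mathcal{L}_M(\psi) \in \mathbb{R}^M$ is injective, hence bijective, so $\mathcal{I}_M$ is a well-defined linear projection onto $\mathcal{U}_M$ (in particular $\mathcal{I}_M^2 = \mathcal{I}_M$ and $\mathcal{I}_M|_{\mathcal{U}_M} = {\rm id}$), the matrix $\mathbb{L}_{\eta}$ is invertible, and $\mathcal{X} = \mathcal{U}_M \oplus \ker\mathcal{I}_M$ with $\ker\mathcal{I}_M = \{u - \mathcal{I}_M(u): u \in \mathcal{X}\}$. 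Item 1 is then immediate: for $u \in \mathcal{U}_M$ one has $\mathcal{I}_M(u)=u$, so the first summand of \eqref{eq:GEIM_inner_product_alternative} vanishes and $((u,u))=(u,u)$. For item 2 I would write $\mathcal{I}_M(u) = \sum_{m'} c_{m'}(u)\psi_{m'}$; the interpolation constraints $\ell_m^o(\mathcal{I}_M(u)) = \ell_m^o(u)$ become $\mathbb{L}_{\eta}\mathbf{c}(u) = \mathcal{L}_M(u)$, i.e. $\mathbf{c}(u) = \mathbb{L}_{\eta}^{-1}\mathcal{L}_M(u)$, and orthonormality of $\{\psi_m\}$ gives $(\mathcal{I}_M(u),\mathcal{I}_M(v)) = \mathbf{c}(u)^{\top}\mathbf{c}(v) = \mathcal{L}_M(u)^{\top}\mathbb{W}\,\mathcal{L}_M(v)$ with $\mathbb{W} = \mathbb{L}_{\eta}^{-\top}\mathbb{L}_{\eta}^{-1}$, which is \eqref{eq:new_product_algebraic}.

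For item 3 both inequalities come from the triangle inequality in $\|\cdot\|$ together with $(a+b)^2 \le 2a^2 + 2b^2$. From $\|u\| \le \|u-\mathcal{I}_M(u)\| + \|\mathcal{I}_M(u)\|$ I obtain $\|u\|^2 \le 2\vertiii{u}^2$, which is the left inequality (and also yields definiteness, so $\vertiii{\cdot}$ is genuinely a norm). Applying $(a+b)^2\le 2a^2+2b^2$ to $\|u-\mathcal{I}_M(u)\|^2$ and using $\|\mathcal{I}_M(u)\| \le \|\mathcal{I}_M\|_{\mathcal{L}(\mathcal{X})}\|u\|$ on both summands of \eqref{eq:GEIM_inner_product_alternative} gives $\vertiii{u}^2 \le \big(2 + 3\|\mathcal{I}_M\|_{\mathcal{L}(\mathcal{X})}^2\big)\|u\|^2$.

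Items 4--6 I would treat as follows. For item 4, since $\phi_m \in \mathcal{U}_M$ we have $\mathcal{I}_M(\phi_m) = \phi_m$, so \eqref{eq:GEIM_inner_product_alternative} collapses to $((\phi_m, v)) = (\phi_m, \mathcal{I}_M(v))$; expanding $\mathcal{I}_M(v)$ in the orthonormal basis and using $\mathbb{L}_{\eta}\mathbf{c}(v) = \mathcal{L}_M(v)$ gives $(\phi_m,\mathcal{I}_M(v)) = \sum_p (\mathbb{L}_{\eta})_{m,p}\,c_p(v) = (\mathbb{L}_{\eta}\mathbf{c}(v))_m = \ell_m^o(v)$. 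For item 5, the one step that needs an idea rather than a computation is the identity $\mathcal{I}_M = \Pi_{\mathcal{U}_M}$ when $\mathcal{U}_M = {\rm span}\{R_{\mathcal{X}}\ell_m^o\}$: indeed $\ell_m^o(\Pi_{\mathcal{U}_M}u) = (\Pi_{\mathcal{U}_M}u, R_{\mathcal{X}}\ell_m^o) = (u, R_{\mathcal{X}}\ell_m^o) = \ell_m^o(u)$ because $R_{\mathcal{X}}\ell_m^o \in \mathcal{U}_M$, so $\Pi_{\mathcal{U}_M}u$ solves the interpolation problem and hence equals $\mathcal{I}_M(u)$ by uniqueness; substituting into \eqref{eq:GEIM_inner_product_alternative} and using the Pythagorean identity (the cross terms $(u-\Pi_{\mathcal{U}_M}u, \Pi_{\mathcal{U}_M}v)$ and $(\Pi_{\mathcal{U}_M}u, v-\Pi_{\mathcal{U}_M}v)$ vanish) yields $((u,v)) = (u,v)$. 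Finally, for item 6 I would check that $\mathcal{U}_M$ and $\ker\mathcal{I}_M$ are $((\cdot,\cdot))$-orthogonal: for $w \in \mathcal{U}_M$ and $v \in \ker\mathcal{I}_M$, \eqref{eq:GEIM_inner_product_alternative} gives $((w,v)) = (w-w,\,v-0) + (w,\,0) = 0$; since $u = \mathcal{I}_M(u) + (u-\mathcal{I}_M(u))$ splits $u$ into a $\mathcal{U}_M$-part and a $\ker\mathcal{I}_M$-part, this is precisely the $((\cdot,\cdot))$-orthogonal decomposition of $u$, whence $\Pi_{\mathcal{U}_M}^{\vertiii{\cdot}}u = \mathcal{I}_M(u)$.

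I do not expect a real obstacle here: each item is a short linear-algebra or Hilbert-space argument. The two points requiring care are (i) invoking unisolvency at the outset so that $\mathcal{I}_M$ and $\mathbb{L}_{\eta}$ are legitimate, well-defined objects, and (ii) in item 5, recognizing that for the variational update space the data-interpolant coincides with the $(\cdot,\cdot)$-orthogonal projection onto $\mathcal{U}_M$; once those are in place, everything else follows directly from the definition \eqref{eq:GEIM_inner_product_alternative}.
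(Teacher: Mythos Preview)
Your proof is correct and follows essentially the same line as the paper's. The only noticeable differences are cosmetic: for item~4 you compute $((\phi_m,v)) = (\phi_m,\mathcal{I}_M(v))$ directly from the definition \eqref{eq:GEIM_inner_product_alternative} and then use orthonormality, whereas the paper passes through the algebraic form \eqref{eq:new_product_algebraic} and does a longer matrix manipulation; for item~6 you verify the $((\cdot,\cdot))$-orthogonality of $\mathcal{U}_M$ and $\ker\mathcal{I}_M$ explicitly, while the paper deduces it from item~4. In both places your route is slightly shorter, but the underlying ideas are the same.
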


\begin{proof}
If $u \in \mathcal{U}_M$, $\mathcal{I}_M(u)=u$.
Therefore, 
$$
\vertiii{u}^2
=
(u - \mathcal{I}_M(u), u - \mathcal{I}_M(u)) + 
(\mathcal{I}_M(u), \mathcal{I}_M(u))
=
(u,u)
=
\|  u \|^2,
$$
which proves the first statement.

Observing that $\mathcal{I}_M(u) = \sum_{m=1}^M  ( \mathbb{L}_{\eta}^{-1} \mathcal{L}_M(u) )_m \psi_m$, and recalling that 
$\psi_1,\ldots,\psi_M$ are orthonormal, we find 
$$
\begin{array}{ll}
(\mathcal{I}_M(u), \mathcal{I}_M(v)) 
&
\displaystyle{
=
\sum_{m,m'=1}^M \,
 ( \mathbb{L}_{\eta}^{-1} \mathcal{L}_M(u) )_m
  ( \mathbb{L}_{\eta}^{-1} \mathcal{L}_M(v) )_{m'}
  \,
  (\psi_m, \psi_{m'})
 }
\\[4mm]
&
\displaystyle{
 =
  \sum_{m=1}^M \,
 ( \mathbb{L}_{\eta}^{-1} \mathcal{L}_M(u) )_m
  ( \mathbb{L}_{\eta}^{-1} \mathcal{L}_M(v) )_{m}
=
\sum_{p,p'=1}^M
\,
\ell_p^o(u) \,  \ell_{p'}^o(v) \, \mathbb{W}_{p,p'},
}
\\
\end{array}
$$ 
which proves the second statement.

We now prove \eqref{eq:estimate_new_product} (cf. statement 3).
First, we find that
$$
\|  u \|^2
\leq 2 \left(
\|  u - \mathcal{I}_M(u) \|^2
+
\| \mathcal{I}_M(u) \|^2
\right)
=
2 \vertiii{u}^2,
$$
where the first inequality follows from 
the triangular inequality and
$(a+b)^2 \leq 2 a^2 + 2b^2$, while the second identity follows 
 directly 
from the definition of $((\cdot, \cdot))$.
Second, exploiting  the fact that the interpolation operator is continuous in $\mathcal{X}$, we find
$$
\vertiii{u}^2
=
\|  u - \mathcal{I}_M(u) \|^2
+
\| \mathcal{I}_M(u) \|^2
\leq
2 \| u \|^2
+
3 \|  \mathcal{I}_M(u)   \|^2
\leq 
\left(
2 + 3 \|  \mathcal{I}_M \|_{\mathcal{L}(\mathcal{X})}^2
\right)   \| u \|^2.
$$
Combining the latter two estimates, we obtain \eqref{eq:estimate_new_product}.

We prove the fourth statement.
Since $\phi_m \in \mathcal{U}_M$, $\phi_m - \mathcal{I}_M(\phi_m) \equiv 0$. Therefore,
$$
((  \phi_m, v)) = \sum_{p,p'=1}^M \, \ell_p^o(\phi_m) \, \ell_{p'}^o(v) \mathbb{W}_{p,p'}.
$$
By definition of the interpolation operator, 
we have 
$\ell_{p'}^o(v) = \ell_{p'}^o(  \mathcal{I}_M(v)  )$ for $p'=1,\ldots,M$, and 
$\mathcal{I}_M(v) = \sum_{k'=1}^M \left(
\mathbb{L}_{\eta}^{-1} \mathcal{L}_M(v)
\right)_{k'} \psi_{k'}$. Then, we find
$$
\begin{array}{ll}
(( \phi_m, v   )) = &
\displaystyle{
\sum_{k,k'=1}^M \sum_{p,p'=1}^M
\,
\left( \mathbb{L}_{\eta} \right)_{m,k}
\ell_p^o(\psi_k) \ell_{p'}^o(\psi_{k'})
\mathbb{W}_{p,p'} \left( \mathbb{L}_{\eta}^{-1} \mathcal{L}_M(v) \right)_{k'}
}
\\[4mm]
&
\displaystyle{
=
\sum_{k,q=1}^M (\mathbb{L}_{\eta})_{m,k}
\left(
\mathbb{L}_{\eta}^{-1} 
\right)_{k,q}
\ell_q^o(v)
=
\sum_{q=1}^M
\,
\left(
\mathbb{L}_{\eta}
\mathbb{L}_{\eta}^{-1} 
\right)_{m,q}
\ell_q^o(v)
=
\ell_m^o(v),}
\end{array}
$$
for all  $v \in \mathcal{X}$.
Thesis follows.

In order to prove the fifth statement,
we observe that, 
for $\mathcal{U}_M = {\rm span} \{ R_{\mathcal{X}} \ell_m^o \}_{m=1}^M$,
$\mathcal{I}_M(u) = \Pi_{\mathcal{U}_M} u$.
Then, exploiting the properties 
of $((\cdot, \cdot))$ shown so far
and the projection theorem, we find
$$
\vertiii{u}^2  =
\| u -  \mathcal{I}_M(u)\|^2 + \|  \mathcal{I}_M(u)\|^2
=
\| \Pi_{\mathcal{U}_M^{\perp}}(u)\|^2
+  \| \Pi_{\mathcal{U}_M} (u)\|^2
=
\| u \|^2.
$$

The proof the sixth statement follows directly from the fourth property:
$$
(( \Pi_{\mathcal{U}_M}^{ \vertiii{\cdot} } u, v)) =  (( u, v)) \; \forall \, v \in \mathcal{U}_M \,
\Leftrightarrow \,
\ell_m^o(  \Pi_{\mathcal{U}_M}^{ \vertiii{\cdot} } u    ) = \ell_m^o(v), \;m=1,\ldots,M.
$$
The latter completes the proof.
\end{proof}

\begin{remark}
\label{norm_calIM}
The norm $\|  \mathcal{I}_M \|_{\mathcal{L}(\mathcal{X})}$ is the Lebesgue constant in the $\mathcal{X}$ norm, and it is the inverse of the inf-sup constant (\cite[Theorem 2.4]{maday2015generalized}):
$$
\gamma_M:=
\inf_{w \in \mathcal{U}_M} \sup_{v \in \mathcal{W}_M} \, 
\frac{(w,v)}{\| w  \| \| v \|  },
\quad
\mathcal{W}_M
=
{\rm span} \{ 
R_{\mathcal{X}} \ell_m^o
 \}_{m=1}^M.
$$
The Lebesgue constant depends on the triple $\mathcal{O}= (\{ \ell_m^o \}_{m=1}^M,  \mathcal{U}_M, (\mathcal{X}, \| \cdot \|)   )$.
For certain choices of the triple $\mathcal{O}$, it   is possible to determine the asymptotic behavior of $\|  \mathcal{I}_M \|_{\mathcal{L}(\mathcal{X})}$:
to provide a concrete example, 
we  refer  to \cite[Corollary 1.17]{bernardi2004discretisations} for an important result concerning the behavior of $\|  \mathcal{I}_M \|_{\mathcal{L}(\mathcal{X})}$ for
one-dimensional polynomial interpolation.
\end{remark}

Lemma \ref{th:properties_inner_product} can be exploited to 
recover an infinite-dimensional variational interpretation of the PBDW statement with user-defined update.
The proof is a straightforward consequence of \cite[Proposition 2.2.3]{taddei2017model},  and is here omitted.

\begin{proposition}
\label{th:variation_empirical_update}
Let $\mathcal{U}_M$ be an $M$-dimensional space satisfying \eqref{eq:unisolvency_update}; then, the PBDW solution to \eqref{eq:PBDW_two_field_formulation} 
for $\mathcal{U}= \mathcal{U}_M$
solves the following problem:
\begin{equation}
\label{eq:PBDW_GEIM_inner}
(z_{\xi}^{\star}, \eta_{\xi}^{\star}) :=
{\rm arg}
\inf_{  (z,\eta)  \in \mathcal{Z}_N \times \mathcal{X}  }
\,
\xi \vertiii{\eta}^2
+
\frac{1}{M}
\|  
\mathcal{L}_M  \left(z   +  \eta \right) -  \mathbf{y}
\|_2^2,
\end{equation}
where $\vertiii{\cdot}$ is the norm induced by the inner product $((\cdot, \cdot))$ of $\mathcal{X}$ defined in 
\eqref{eq:GEIM_inner_product_alternative}.

If the inf-sup constant (\cite{maday2015parameterized})
\begin{equation}
\label{eq:inf_sup_constant}
\beta_{N,M}
= \inf_{z \in \mathcal{Z}_N} \sup_{q \in \mathcal{U}_M}
\,
\frac{((z,q))}{  \vertiii{z} \vertiii{q}   }
\end{equation}
is strictly positive, then the solution 
$(z_{\xi}^{\star}, \eta_{\xi}^{\star}) $
to  \eqref{eq:PBDW_GEIM_inner} is unique, and solves the following saddle-point problem:
\begin{equation}
\label{eq:APBDW_saddle}
\left\{
\begin{array}{lll}
2\xi ((\eta_{\xi}^{\star}, q)) + 
\frac{2}{M}
\, \sum_{m=1}^M
\,
\left(   \ell_m^o(  z_{\xi}^{\star}  +  \eta_{\xi}^{\star}   )  - y_m   \right) \, \ell_m^o(q) 
&
=0
&
\forall \, q \in \mathcal{U}_M;
\\[3mm]
((\eta_{\xi}^{\star}, p))
&
=0
&
\forall \, p  \in \mathcal{Z}_N.
\\
\end{array}
\right.
\end{equation}  
Furthermore,  the estimate $u_{\xi}^{\star}$ belongs to the space:
$$
u_{\xi}^{\star} \in \mathcal{Z}_N \oplus \mathcal{Z}_N^{\perp, \vertiii{\cdot}} \cap \mathcal{U}_M
=
\{
z + \eta: 
z \in \mathcal{Z}_N,
\; \;
\eta \in \mathcal{U}_M,
\; \;
((\eta, q)) = 0 \; \; \; \forall q \in \mathcal{Z}_N
\};
$$
where 
$\mathcal{Z}_N^{\perp, \vertiii{\cdot}}$ denotes the orthogonal complement  of
$\mathcal{Z}_N$ with respect to the $\vertiii{\cdot}$ norm.
\end{proposition}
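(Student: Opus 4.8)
The plan is to establish the three assertions in turn, relying throughout on Lemma~\ref{th:properties_inner_product}; the argument is the specialization of the one behind \cite[Proposition 2.2.3]{taddei2017model} to the loss $V_M(\cdot)=\frac1M\|\cdot\|_2^2$.

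\emph{Step 1: equivalence of \eqref{eq:PBDW_two_field_formulation} (with $\mathcal{U}=\mathcal{U}_M$) and \eqref{eq:PBDW_GEIM_inner}.} Denote by $J_{\xi}^{\vertiii{\cdot}}$ the objective of \eqref{eq:PBDW_GEIM_inner}, minimized over $\mathcal{Z}_N\times\mathcal{X}$. For arbitrary $\eta\in\mathcal{X}$ I would split $\eta=\mathcal{I}_M(\eta)+(\eta-\mathcal{I}_M(\eta))$: the defining property $\ell_m^o(\mathcal{I}_M(\eta))=\ell_m^o(\eta)$ shows that the data-misfit term is unchanged if $\eta$ is replaced by $\mathcal{I}_M(\eta)$, while statement~6 of Lemma~\ref{th:properties_inner_product}, namely $\mathcal{I}_M=\Pi_{\mathcal{U}_M}^{\vertiii{\cdot}}$, gives $\vertiii{\eta}^2=\vertiii{\mathcal{I}_M(\eta)}^2+\vertiii{\eta-\mathcal{I}_M(\eta)}^2\ge\vertiii{\mathcal{I}_M(\eta)}^2$, with equality iff $\eta\in\mathcal{U}_M$. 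Hence, for $\xi>0$, the infimum of $J_{\xi}^{\vertiii{\cdot}}$ over $\mathcal{Z}_N\times\mathcal{X}$ is attained only on $\mathcal{Z}_N\times\mathcal{U}_M$, where $\vertiii{\eta}=\|\eta\|$ by statement~1, so $J_{\xi}^{\vertiii{\cdot}}$ agrees there with the objective $J_{\xi}$ of \eqref{eq:PBDW_two_field_formulation}. The two problems therefore share the same solution set, and every solution has $\eta_{\xi}^{\star}\in\mathcal{U}_M$.

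\emph{Step 2: uniqueness and the saddle-point system.} I would first record that $\beta_{N,M}>0$ in \eqref{eq:inf_sup_constant} is equivalent to injectivity of $\mathcal{L}_M$ on $\mathcal{Z}_N$: by statement~4 of Lemma~\ref{th:properties_inner_product} together with invertibility of $\mathbb{L}_{\eta}$ (a restatement of \eqref{eq:unisolvency_update}), the $\phi_m$ form a basis of $\mathcal{U}_M$ and $((z,\phi_m))=\ell_m^o(z)$, so $\sup_{q\in\mathcal{U}_M}((z,q))/\vertiii{q}=\vertiii{\Pi_{\mathcal{U}_M}^{\vertiii{\cdot}}z}=\vertiii{\mathcal{I}_M(z)}$; since $\mathcal{I}_M(z)=0\iff\mathcal{L}_M(z)=0$ (again \eqref{eq:unisolvency_update}) and the unit sphere of the finite-dimensional space $\mathcal{Z}_N$ is compact, $\beta_{N,M}>0$ holds iff $\mathcal{L}_M|_{\mathcal{Z}_N}$ has trivial kernel. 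Granting this, $J_{\xi}$ is a continuous, coercive functional on $\mathcal{Z}_N\times\mathcal{X}$ that is strictly convex — strictly convex in $\eta$ because of the term $\xi\vertiii{\eta}^2$, and, along the slice where $\eta$ is held fixed, strictly convex in $z$ because $\mathcal{L}_M|_{\mathcal{Z}_N}$ is injective; here I use statement~3 so that $\vertiii{\cdot}$ is equivalent to $\|\cdot\|$ and $(\mathcal{X},\vertiii{\cdot})$ is a Hilbert space. It therefore has a unique minimizer, characterized by stationarity with respect to $z\in\mathcal{Z}_N$ and to $\eta\in\mathcal{X}$ (equivalently, by Step~1, to $\eta\in\mathcal{U}_M$). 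Writing out these two Euler--Lagrange conditions, the $\eta$-equation tested on $q\in\mathcal{U}_M$ is the first line of \eqref{eq:APBDW_saddle}, and subtracting the $z$-equation from the $\eta$-equation tested on $p\in\mathcal{Z}_N$ cancels the data-residual term and leaves $2\xi((\eta_{\xi}^{\star},p))=0$, i.e.\ the second line of \eqref{eq:APBDW_saddle}; the converse (a solution of \eqref{eq:APBDW_saddle} is a critical point, hence the minimizer) follows from convexity and uniqueness. Alternatively one invokes \cite[Proposition 2.2.3]{taddei2017model} with $((\cdot,\cdot))$ in place of $(\cdot,\cdot)$.

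\emph{Step 3: the search space, and the main obstacle.} From Step~1, $\eta_{\xi}^{\star}\in\mathcal{U}_M$; from the second line of \eqref{eq:APBDW_saddle}, $((\eta_{\xi}^{\star},p))=0$ for all $p\in\mathcal{Z}_N$, i.e.\ $\eta_{\xi}^{\star}\in\mathcal{Z}_N^{\perp,\vertiii{\cdot}}$, so $u_{\xi}^{\star}=z_{\xi}^{\star}+\eta_{\xi}^{\star}\in\mathcal{Z}_N+(\mathcal{Z}_N^{\perp,\vertiii{\cdot}}\cap\mathcal{U}_M)$, the sum being direct because $\mathcal{Z}_N\cap\mathcal{Z}_N^{\perp,\vertiii{\cdot}}=\{0\}$. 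There is no deep difficulty here: everything substantive is already packaged in Lemma~\ref{th:properties_inner_product}. The point needing the most care is the reduction in Step~1 — one must verify that minimizing the $\vertiii{\cdot}$-regularized functional over all of $\mathcal{X}$ genuinely collapses to $\mathcal{U}_M$, which rests precisely on the non-obvious identification $\mathcal{I}_M=\Pi_{\mathcal{U}_M}^{\vertiii{\cdot}}$ — together with the bookkeeping in Step~2 matching the two Euler--Lagrange conditions to the particular mixed form \eqref{eq:APBDW_saddle}. The one genuinely substantive (rather than mechanical) ingredient is the equivalence $\beta_{N,M}>0\iff\mathcal{L}_M|_{\mathcal{Z}_N}$ injective, which upgrades convexity of $J_{\xi}$ to strict convexity and so yields uniqueness.
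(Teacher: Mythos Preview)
Your argument is correct and is precisely the detailed unpacking that the paper omits: the paper's own ``proof'' consists of a single sentence deferring to \cite[Proposition~2.2.3]{taddei2017model}, and your Steps~1--3 are the natural specialization of that result to the present setting via Lemma~\ref{th:properties_inner_product}. The only mildly informal point is the strict-convexity claim in Step~2 (separate strict convexity in $\eta$ and in $z$ does not in general imply joint strict convexity), but your intended argument---first force $\eta_1=\eta_2$ via the $\xi\vertiii{\eta}^2$ term, then force $z_1=z_2$ via injectivity of $\mathcal{L}_M|_{\mathcal{Z}_N}$---is valid and standard.
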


\begin{remark}
In the variational approach, we first choose an inner product $(\cdot, \cdot)$ for $\mathcal{X}$, and then we appeal to an high-fidelity solver to generate the update space;
on the other hand, in the user-defined  approach, we first choose the space $\mathcal{U}_M$, and then we use it to define the inner product $((\cdot, \cdot))$ \eqref{eq:GEIM_inner_product_alternative}.
We remark that the second approach is used in the kernel methods' literature (see, e.g., \cite{wendland2004scattered}) for pointwise measurements. Given a positive definite kernel is possible to characterize the Sobolev regularity of the resulting ambient (\emph{native}) space: the characterization of the properties of the ambient space can be then exploited 
to prove \emph{a priori} error bounds for the state estimation error, and estimate the asymptotic convergence rate as $M \to \infty$.
On the other hand, the  construction presented in this section does not allow us to characterize the smoothness of the space induced  by the norm $\vertiii{\cdot}$
in the limit $M \to \infty$, unless  $\| \mathcal{I}_M \|_{\mathcal{L}(\mathcal{X})}$ is bounded for $M \to \infty$.
\end{remark}

\subsection{Algebraic formulation}
\label{sec:algebraic}

Given the fields
$z \in \mathcal{Z}_N$ and $\eta \in \mathcal{U}_M$, we introduce the vectors $\mathbf{z} \in \mathbb{R}^N$ and $\boldsymbol{\eta} \in \mathbb{R}^M$ such that
$z = \sum_{n=1}^N z_n \zeta_n$, and $\eta = \sum_{m=1}^M \eta_m \psi_m$,
where $\zeta_1,\ldots,\zeta_N$ and
$\psi_1,\ldots,\psi_M$ are orthonormal bases
(with respect to $\|  \cdot \|$)
 of $\mathcal{Z}_N$ and $\mathcal{U}_M$, respectively.
 We remark upfront that the derivation is independent of the particular construction of the update space.
Then, we can rewrite \eqref{eq:PBDW_two_field_formulation} as a discrete optimization problem for the coefficients:
\begin{equation}
\label{eq:PBDW_algebraic}
(\mathbf{z}_{\xi}^{\star},  \boldsymbol{\eta}_{\xi}^{\star})
=
{\rm arg} \min_{  (\mathbf{z},  \boldsymbol{\eta}) \in \mathbb{R}^N \times \mathbb{R}^M  }
\,
\xi \|  \boldsymbol{\eta}  \|_2^2
\,
+
\,
\frac{1}{M}
\|   \mathbb{L}_{\eta} \boldsymbol{\eta} + 
 \mathbb{L}_{z} \mathbf{z} - \mathbf{y}  \|_2^2;
\end{equation}
where $\mathbb{L}_z \in \mathbb{R}^{M,N}$, and 
$\mathbb{L}_{\eta} \in \mathbb{R}^{M,M}$ are given by
$(\mathbb{L}_z)_{m,n} = \ell_m^o(\zeta_n)$,
$(\mathbb{L}_{\eta})_{m,m'} = \ell_m^o(\psi_{m'})$.
We observe that   \eqref{eq:unisolvency_update} implies that $\mathbb{L}_{\eta}$ is invertible.
By differentiating the objective function with respect to $\mathbf{z}$ and $\boldsymbol{\eta}$, we obtain
\begin{equation}
\label{eq:PBDW_algebraic_temp}
\left\{
\begin{array}{l}
(\xi M  \mathbb{I} +   \mathbb{L}_{\eta}^T \mathbb{L}_{\eta}     ) \boldsymbol{\eta}_{\xi}^{\star}
+  \mathbb{L}_{\eta}^T \mathbb{L}_{z}  \boldsymbol{z}_{\xi}^{\star}
 =   \mathbb{L}_{\eta}^T    \boldsymbol{y};   \\[4mm]
\mathbb{L}_{z}^T \mathbb{L}_{\eta}  \boldsymbol{\eta}_{\xi}^{\star}
+  \mathbb{L}_{z}^T \mathbb{L}_{z}  \boldsymbol{z}_{\xi}^{\star}
 = \mathbb{L}_{z}^T    \boldsymbol{y}.   \\
\end{array}
\right.
\end{equation}
If we multiply \eqref{eq:PBDW_algebraic_temp}$_1$ by $\mathbb{L}_{z}^T  \mathbb{L}_{\eta}^{-T}$, and we exploit \eqref{eq:PBDW_algebraic_temp}$_2$ we obtain
$$
\xi M    \mathbb{L}_{\eta}^T  \mathbb{L}_{\eta}^{-T}  \boldsymbol{\eta}_{\xi}^{\star} + 
\mathbb{L}_{z} ^T \mathbb{L}_{\eta}  \boldsymbol{\eta}_{\xi}^{\star} +
\mathbb{L}_{z}^T \mathbb{L}_{z}  \boldsymbol{z}_{\xi}^{\star}
 =
\mathbb{L}_{z}^T    \boldsymbol{y} 
   =
\mathbb{L}_{z}^T \mathbb{L}_{\eta}  \boldsymbol{\eta}_{\xi}^{\star} +  \mathbb{L}_{z}^T \mathbb{L}_{z}  \boldsymbol{z}_{\xi}^{\star}  
 \Rightarrow
\mathbb{L}_{z}^T \mathbb{L}_{\eta}^{-T}  \boldsymbol{\eta}_{\xi}^{\star}  = \mathbf{0}
$$
Finally, if we introduce $ 
\tilde{ \boldsymbol{\eta}}_{\xi}^{\star} =  
\mathbb{L}_{\eta}^{-T}      \boldsymbol{\eta}_{\xi}^{\star}$ and we multiply \eqref{eq:PBDW_algebraic_temp}$_1$ by $\mathbb{L}_{\eta}^{-T}$, we obtain
\begin{equation}
\label{eq:PBDW_algebraic_linear}
\left\{
\begin{array}{l}
\left( \xi M  \mathbb{I} + \mathbb{L}_{\eta}  \mathbb{L}_{\eta}^{T} \right) \tilde{ \boldsymbol{\eta}}_{\xi}^{\star} 
+ \mathbb{L}_{z}  \boldsymbol{z}_{\xi}^{\star} = \boldsymbol{y},  \\[3mm]
\mathbb{L}_{z}^T  \tilde{ \boldsymbol{\eta}}_{\xi}^{\star}  = \boldsymbol{0};
\\
\end{array}
\right.
\qquad
 \boldsymbol{\eta}_{\xi}^{\star} =  \mathbb{L}_{\eta}^{T}   \tilde{ \boldsymbol{\eta}}_{\xi}^{\star}. 
\end{equation}
System \eqref{eq:PBDW_algebraic_linear} can be used to efficiently compute the PBDW solution.

\begin{remark}
(\textbf{spectrum of the PBDW system})
Following the argument in 
\cite[Section 3.4]{benzi2005numerical}, we observe that
the saddle-point system \eqref{eq:PBDW_algebraic_linear}  is congruent to the block-diagonal matrix 
$\left[
\begin{array}{cc}
\mathbb{L}_{\eta} \mathbb{L}_{\eta}^T  + \xi M \mathbb{I} & 0  \\
0   & - \mathbb{L}_z^T (\mathbb{L}_{\eta} \mathbb{L}_{\eta}^T   + \xi M \mathbb{I})^{-1} \mathbb{L}_z \\
\end{array}
\right]$. 
If the update satisfies \eqref{eq:unisolvency_update}, $\mathbb{L}_{\eta}$ is invertible, and thus 
$\mathbb{L}_{\eta} \mathbb{L}_{\eta}^T  + \xi M \mathbb{I}$ is positive definite for all $\xi \geq 0$.

On the other hand, the second block 
$- \mathbb{L}_z^T (\mathbb{L}_{\eta} \mathbb{L}_{\eta}^T   + \xi M \mathbb{I})^{-1} \mathbb{L}_z$ is negative definite if and only if 
the rank of $\mathbb{L}_z$ is equal to $N$: this condition is equivalent to the positivity of $\beta_{N,M}$ in Proposition \ref{th:variation_empirical_update}.
\end{remark}

In view of the analysis of the method, and of the definition of the Greedy procedure for the selection of the transducers' location,  next Proposition  provides a computable expression for the inf-sup constant $\beta_{N,M}$ defined in \eqref{eq:inf_sup_constant}, and for the norm $\|  \mathcal{I}_M \|_{\mathcal{L}(\mathcal{X})}$ in \eqref{eq:estimate_new_product}.

\begin{proposition}
\label{th:inf_sup}
Suppose that $\zeta_1,\ldots,\zeta_N$ and $\psi_1,\ldots,\psi_M$ are orthonormal in $\| \cdot \|$.
The inf-sup constant $\beta_{N,M}$ is the square root of the minimum eigenvalue of the following eigenproblem:
\begin{equation}
\label{eq:inf_sup_algebraic}
\mathbb{L}_z^T \mathbb{W} \mathbb{L}_z \, \mathbf{z}_n
=
\nu_n \left(
\mathbb{I} + 
2 \mathbb{L}_z^T  \mathbb{W} \mathbb{L}_z
- 2  {\rm sym} \left( \mathbb{C}^T \mathbb{L}_{\eta}^{-1} \mathbb{L}_z \right)
\right)
\mathbf{z}_n,
\quad
n=1,\ldots,N;
\end{equation}
while the norm $\|  \mathcal{I}_M \|_{\mathcal{L}(\mathcal{X})}$ is the inverse of the square root of the minimum eigenvalue of the following eigenproblem:
\begin{equation}
\label{eq:gammaM_algebraic}
\mathbb{L}_{\eta}^T \mathbb{K}^{-1}  \mathbb{L}_{\eta} \boldsymbol{\eta}_m =
\lambda_m  \boldsymbol{\eta}_m, \quad
m=1,\ldots,M.
\end{equation}
Here,
$\mathbb{W}  = \mathbb{L}_{\eta}^{-T} \mathbb{L}_{\eta}^{-1}$,
${\rm sym}(A)=\frac{1}{2}(A+A^T)$, while the matrices 
$\mathbb{C} \in \mathbb{R}^{M,N}$ and  
$ \mathbb{K} \in \mathbb{R}^{M,M}$ 
are given by
$\mathbb{C}_{m,n} = (\psi_m, \zeta_n)$, and 
$\mathbb{K}_{m,m'} = (R_{\mathcal{X}} \ell_m^o, R_{\mathcal{X}} \ell_{m'}^o)$.
\end{proposition}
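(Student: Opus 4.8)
The plan is to expand every field in the orthonormal bases $\{\zeta_n\}_{n=1}^N$ of $\mathcal{Z}_N$ and $\{\psi_m\}_{m=1}^M$ of $\mathcal{U}_M$, use Lemma~\ref{th:properties_inner_product} to turn $((\cdot,\cdot))$ and $\vertiii{\cdot}$ into explicit quadratic forms, and then recognize the resulting constrained optima as (generalized) eigenvalues.

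For $\beta_{N,M}$ I would proceed as follows. Write $z=\sum_n\mathbf{z}_n\zeta_n\in\mathcal{Z}_N$ and $q=\sum_m\mathbf{q}_m\psi_m\in\mathcal{U}_M$. Since $q\in\mathcal{U}_M$ we have $\mathcal{I}_M(q)=q$, so the ``$u-\mathcal{I}_M(u)$'' contribution in \eqref{eq:new_product_algebraic} vanishes and $((z,q))=\mathcal{L}_M(z)^T\mathbb{W}\,\mathcal{L}_M(q)=\mathbf{z}^T\mathbb{L}_z^T\mathbb{W}\mathbb{L}_\eta\mathbf{q}=(\mathbb{L}_\eta^{-1}\mathbb{L}_z\mathbf{z})^T\mathbf{q}$, using $\mathbb{W}\mathbb{L}_\eta=\mathbb{L}_\eta^{-T}$ together with $\mathcal{L}_M(z)=\mathbb{L}_z\mathbf{z}$ and $\mathcal{L}_M(q)=\mathbb{L}_\eta\mathbf{q}$. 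By Lemma~\ref{th:properties_inner_product}(1) and orthonormality, $\vertiii{q}=\|q\|=\|\mathbf{q}\|_2$, so the inner supremum is, by Cauchy--Schwarz (attained at $\mathbf{q}=\mathbb{L}_\eta^{-1}\mathbb{L}_z\mathbf{z}$), $\sup_{q\in\mathcal{U}_M}((z,q))/\vertiii{q}=\|\mathbb{L}_\eta^{-1}\mathbb{L}_z\mathbf{z}\|_2$. For the denominator, from $\mathcal{I}_M(z)=\sum_m(\mathbb{L}_\eta^{-1}\mathbb{L}_z\mathbf{z})_m\psi_m$ one obtains $\|\mathcal{I}_M(z)\|^2=\mathbf{z}^T\mathbb{L}_z^T\mathbb{W}\mathbb{L}_z\mathbf{z}$ and $(z,\mathcal{I}_M(z))=(\mathbb{L}_\eta^{-1}\mathbb{L}_z\mathbf{z})^T\mathbb{C}\mathbf{z}=\mathbf{z}^T\,{\rm sym}(\mathbb{C}^T\mathbb{L}_\eta^{-1}\mathbb{L}_z)\,\mathbf{z}$ (using $\mathbb{C}_{m,n}=(\psi_m,\zeta_n)$, orthonormality of $\{\zeta_n\}$, and the fact that a scalar quadratic form only sees the symmetric part of its matrix); substituting into $\vertiii{z}^2=\|z-\mathcal{I}_M(z)\|^2+\|\mathcal{I}_M(z)\|^2=\|z\|^2-2(z,\mathcal{I}_M(z))+2\|\mathcal{I}_M(z)\|^2$ gives $\vertiii{z}^2=\mathbf{z}^T\mathbb{G}\mathbf{z}$ with $\mathbb{G}=\mathbb{I}+2\mathbb{L}_z^T\mathbb{W}\mathbb{L}_z-2\,{\rm sym}(\mathbb{C}^T\mathbb{L}_\eta^{-1}\mathbb{L}_z)$, which is precisely the matrix on the right-hand side of \eqref{eq:inf_sup_algebraic}. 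Hence $\beta_{N,M}^2=\inf_{\mathbf{z}\neq\mathbf{0}}(\mathbf{z}^T\mathbb{L}_z^T\mathbb{W}\mathbb{L}_z\mathbf{z})/(\mathbf{z}^T\mathbb{G}\mathbf{z})=\nu_1$, the smallest eigenvalue of \eqref{eq:inf_sup_algebraic}; note $\mathbb{G}$ is symmetric positive definite because $\vertiii{\cdot}$ is a norm on $\mathcal{X}$ (Lemma~\ref{th:properties_inner_product}(3)) and $\{\zeta_n\}$ are independent, so the Rayleigh quotient is well posed.

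For $\|\mathcal{I}_M\|_{\mathcal{L}(\mathcal{X})}$ I would invoke Remark~\ref{norm_calIM}, $\|\mathcal{I}_M\|_{\mathcal{L}(\mathcal{X})}=1/\gamma_M$ with $\gamma_M=\inf_{w\in\mathcal{U}_M}\sup_{v\in\mathcal{W}_M}(w,v)/(\|w\|\|v\|)$. Expanding $w=\sum_m\mathbf{w}_m\psi_m$ and $v=\sum_m\mathbf{v}_m R_{\mathcal{X}}\ell_m^o$, orthonormality of $\{\psi_m\}$ gives $\|w\|=\|\mathbf{w}\|_2$, the definition of $\mathbb{K}$ gives $\|v\|^2=\mathbf{v}^T\mathbb{K}\mathbf{v}$, and $(w,v)=\sum_{m,m'}\mathbf{w}_m\mathbf{v}_{m'}\ell_{m'}^o(\psi_m)=\mathbf{w}^T\mathbb{L}_\eta^T\mathbf{v}$. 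Maximizing over $\mathbf{v}$ (substitute $\mathbb{K}^{1/2}\mathbf{v}$ and apply Cauchy--Schwarz) yields $\sup_v(w,v)/\|v\|=\|\mathbb{K}^{-1/2}\mathbb{L}_\eta\mathbf{w}\|_2$, so $\gamma_M=\sigma_{\min}(\mathbb{K}^{-1/2}\mathbb{L}_\eta)=\lambda_{\min}(\mathbb{L}_\eta^T\mathbb{K}^{-1}\mathbb{L}_\eta)^{1/2}$ ($\mathbb{K}$ is symmetric positive definite since $\ell_1^o,\dots,\ell_M^o$ are linearly independent), whence $\|\mathcal{I}_M\|_{\mathcal{L}(\mathcal{X})}=\lambda_1^{-1/2}$ with $\lambda_1$ the smallest eigenvalue of \eqref{eq:gammaM_algebraic}. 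Alternatively one can compute $\|\mathcal{I}_M(v)\|^2=\|\mathbb{L}_\eta^{-1}\mathcal{L}_M(v)\|_2^2$ directly, reduce the supremum to $v\in\mathcal{W}_M$ (since $\mathcal{L}_M(v)=\mathcal{L}_M(\Pi_{\mathcal{W}_M}v)$), obtain $\|\mathcal{I}_M\|^2=\lambda_{\max}(\mathbb{K}^{1/2}\mathbb{W}\mathbb{K}^{1/2})$, and invert using that the eigenvalues of $\mathbb{K}^{-1/2}\mathbb{L}_\eta\mathbb{L}_\eta^T\mathbb{K}^{-1/2}$ coincide with those of $\mathbb{L}_\eta^T\mathbb{K}^{-1}\mathbb{L}_\eta$.

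The routine parts are the two Cauchy--Schwarz evaluations of the inner suprema and the $\mathbb{K}^{1/2}$ substitution. The one step that needs care is the algebraic identification of $\vertiii{z}^2$ on $\mathcal{Z}_N$ with $\mathbf{z}^T\mathbb{G}\mathbf{z}$ --- keeping track of which factor $\mathbb{L}_\eta^{-1}$, resp.\ $\mathbb{L}_\eta^{-T}$, is absorbed into $\mathbb{W}=\mathbb{L}_\eta^{-T}\mathbb{L}_\eta^{-1}$ versus into $\mathbb{C}^T\mathbb{L}_\eta^{-1}\mathbb{L}_z$, and observing that $(z,\mathcal{I}_M(z))$, being a scalar, may be replaced by its symmetrization --- so I expect this to be the only place where transpose or sign errors could creep in.
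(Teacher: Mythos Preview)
Your proof is correct and follows essentially the same approach as the paper: expand in the orthonormal bases, use Lemma~\ref{th:properties_inner_product} to express $((z,q))$, $\vertiii{q}^2$, and $\vertiii{z}^2$ as the quadratic forms $\mathbf{z}^T\mathbb{L}_z^T\mathbb{W}\mathbb{L}_\eta\mathbf{q}$, $\mathbf{q}^T\mathbf{q}$, and $\mathbf{z}^T\mathbb{B}\mathbf{z}$ (your $\mathbb{G}$ is the paper's $\mathbb{B}$), then reduce the inf--sup to a generalized Rayleigh quotient. For \eqref{eq:gammaM_algebraic} the paper merely says ``same argument, details omitted'' via Remark~\ref{norm_calIM}; your $\mathbb{K}^{1/2}$ substitution and Cauchy--Schwarz step make this explicit and are exactly what is intended.
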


\begin{proof}
Given $z = \sum_{n=1}^N z_n \zeta_n$, $\eta = \sum_{m=1}^M \eta_m \psi_m$,
we can write the interpolation operator as
$\mathcal{I}_M(z) = \sum_{k=1}^M \, \left(   \mathbb{L}_{\eta}^{-1} \mathbb{L}_z \mathbf{z}  \right)_k$
$ \psi_k$. Then, we obtain
$$
((z, \eta   )) = \mathbf{z}^T \mathbb{L}_z^T \mathbb{W} \mathbb{L}_{\eta} \boldsymbol{\eta},
\quad
\vertiii{\eta}^2 = \boldsymbol{\eta}^T \boldsymbol{\eta};
$$
and
$$
 \vertiii{z}^2 = 
  \|    z  \|^2 +  \|   \mathcal{I}_M(z)  \|^2 - 2 (z, \mathcal{I}_M(z))
+ 
\mathbf{z}^T \mathbb{L}_z^T \mathbb{W} \mathbb{L}_z  \boldsymbol{z}
=
\mathbf{z}^T \, \mathbb{B} \,
\mathbf{z}.
$$
where 
$\mathbb{B} = 
\mathbb{I} + 
2 \mathbb{L}_z^T  \mathbb{W} \mathbb{L}_z
-2  {\rm sym} \left( \mathbb{C}^T \mathbb{L}_{\eta}^{-1} \mathbb{L}_z \right)$.
Recalling the definition of $\mathbb{W}$,
we find
$$
\beta_{N,M}^2
=
\inf_{\mathbf{z} \in \mathbb{R}^N} \sup_{ \boldsymbol{\eta} \in \mathbb{R}^M  }
\,
\frac{
\left(   
\mathbf{z}^T \mathbb{L}_z^T \mathbb{W} \mathbb{L}_{\eta} \boldsymbol{\eta}
\right)^2
}
{ ( \boldsymbol{\eta}^T  \boldsymbol{\eta} )
\,
\mathbf{z}^T \mathbb{B} \mathbf{z}
}
=
\inf_{\mathbf{z} \in \mathbb{R}^N} 
\,
\frac{   
\mathbf{z}^T \mathbb{L}_z^T \mathbb{W} \mathbb{L}_{z} \mathbf{z}
}
{
\mathbf{z}^T \mathbb{B} \mathbf{z}
}.
$$
Introducing the Lagrangian multiplier $\nu$, we can write the optimality conditions as
$$
\left\{
\begin{array}{l}
\mathbb{L}_z^T \mathbb{W} \mathbb{L}_{z} \mathbf{z} 
=
\nu \mathbb{B} \mathbf{z};  \\[3mm]
\mathbf{z}^T \, \mathbb{B} \mathbf{z}  = 1.
\\
\end{array}
\right.
$$
Thesis follows.

The proof of \eqref{eq:gammaM_algebraic} exploits Remark \ref{norm_calIM}, and the same  argument used to prove \eqref{eq:inf_sup_algebraic}. 
We omit the details.
\end{proof}

\subsection{\texttt{SGreedy+approximation} algorithm for the selection of the observation centers}
\label{sec:sgreedy}

Algorithm \ref{SGreedy_plus} summarizes the Greedy procedure for the selection of the transducers' locations
 $x_1^{\rm obs}, \ldots, x_M^{\rm obs}$.
 During the first (stability) stage, we maximize the constant $\beta_{N,M}$ in a Greedy manner.
During the second (approximation) stage, we minimize  the fill distance 
$h_{M} := \sup_{x \in \Omega} \, \min_{m=1,\ldots, M} \| x - x_m^{\rm obs} \|_2$ in a Greedy manner.
We switch from the first to the second stage when the inf-sup constant $\beta_{N,M}$ is larger than an user-defined constant $tol$:
if $tol=0$, all the centers are selected through the approximation loop,
if $tol=1$, all centers are selected through the stability loop.
Representative values for $tol$ used in the numerical simulations are 
$tol \in  [0.2, 0.6]$.

The Greedy procedure was first presented in
\cite[Algorithm 3.2.1]{taddei2017model} for the variational update space.
We observe that the stability loop --- for variational update --- corresponds\footnote{
In the SGreedy procedure listed in \cite[Algorithm 2]{maday2015parameterized}, steps 6 and 7 of Algorithm \ref{SGreedy_plus} are replaced by 
$\ell_{m+1}^o= {\rm arg} \min_{\ell   \in  \mathscr{L}} | \ell( z_{{\rm min},m} - \mathcal{I}_M( z_{{\rm min},m}  )  )|  $, and
$\mathcal{U}_{m+1} = \mathcal{U}_m \cup {\rm span}\{ R_{\mathcal{X}} \ell_{m+1}^o  \}$,
where $\mathscr{L} = \{ \ell(\cdot, x, r_{\rm w}): \, x \in \overline{\Omega}  \}$ is the dictionary of available functionals. 
The approach presented here is easier to implement, and less computationally expensive.
Furthermore, the two procedures are asymptotically equivalent for  $r_{\rm w} \to 0^+$, and return similar results if $r_{\rm w}$ is small enough compared to the characteristic length-scale of the elements in $\mathcal{Z}_N$.}
 to the SGreedy algorithm proposed in \cite{maday2015parameterized}; on the other hand,  the  strategy for the approximation step is strongly related to the so-called \emph{farthest-first traversal} approach to the minimax facility location problem (see, e.g, \cite{owen1998strategic}), first proposed by Rosenkrantz et al. in \cite{rosenkrantz1977analysis}.
For the variational update, since $\beta_{N,M}$ is a non-decreasing function of $M$ for a fixed value of $N$ (see \cite{maday2015parameterized}), the stability constant remains above the threshold during the approximation stage.
On the other hand, for the user-defined update, there is in general no guarantee that the inf-sup constant will be above the threshold at the end of the Greedy procedure
since the inner product $((\cdot, \cdot))$ and the induced norm vary with $m$.
In the numerical experiments
we investigate the behavior  of the inf-sup constant for large values of $M$.

\begin{algorithm}[H]                      
\caption{Greedy stability-approximation balancing (\texttt{SGreedy+approx}) algorithm}     
\label{SGreedy_plus}                           
 \small
\begin{flushleft}
\begin{tabular}{|lll|}
\hline
\textbf{Input} &
$\mathcal{Z}_N  = {\rm span} \{ \zeta_n \}_{n=1}^N$
&
background space
\\ 
&
$M$
&
number of sensors
\\ 
&
$tol>0$
&
threshold for the stability constant
\\
&
$\phi: \Omega \times  \Omega \to \mathbb{R}$
&
 update generator
\\[2mm]
\hline
\textbf{Output}
&
$\mathcal{U}_M$
&
update space 
\\[2mm]
  \hline
\end{tabular}
\end{flushleft}  
 \normalsize 

\textbf{Stability}
 
\begin{algorithmic}[1]
\State
Compute 
$
x_{1}^{\rm obs} := 
\mbox{arg} \max_{x \in \bar{\Omega} } |  \zeta_1  (x)   |$, 
$\mathcal{U}_{1} =   \mbox{span} \{ \phi (\cdot, x_{1}^{\rm obs})   \} $,
$m=1$
\vspace{3pt}

\While {$m \leq M$}          
\State
Compute $\beta_{N,m}= \min_{w \in \mathcal{Z}_{N}} 
\max_{v \in \mathcal{U}_{m}} 
\frac{    ((w,v))    }
{\vertiii{w}  \vertiii{v}     }   $.
\vspace{3pt}

\If{ $\beta_{N,m}\leq tol$}
\State
Compute
$
z_{{\rm min},m}:= \mbox{arg} \min_{z \in \mathcal{Z}_N} 
\max_{v \in \mathcal{U}_{m}} 
\frac{    ((z,v))    } {\vertiii{z}  \vertiii{v}     } .
$
\State
Compute 
$
x_{m+1}^{\rm obs} := 
\mbox{arg} \max_{x \in  \bar{\Omega}  } 
|z_{{\rm min},m}(x)  - 
\mathcal{I}_m ( z_{{\rm min},m}) (x)   |.
$
\vspace{2pt}

\State
Set 
$\mathcal{U}_{m+1} = \mathcal{U}_{m} \cup \mbox{span} \{ \phi (\cdot, x_{m+1}^{\rm obs})   \} $,
$m=m+1$.
\vspace{2pt}

\Else
\State
\texttt{Break}
\EndIf
\EndWhile
\end{algorithmic}
\vspace{4pt}

\textbf{Approximation}

\begin{algorithmic}[1]
\While {$m \leq M$}          
\State
Compute 
$ x_{m+1}^{\rm obs} :=   \mbox{arg} \max_{x \in \bar{\Omega} } \,   \min_{m'=1,\ldots,m }  \,  \|  x  - x_{m'}^{\rm obs} \|_2  $.
\vspace{2pt}

\State
Set 
$\mathcal{U}_{m+1} = \mathcal{U}_{m} \cup \mbox{span} \{ \phi (\cdot, x_{m+1}^{\rm obs})   \} $,
$m=m+1$.
\vspace{2pt}

\EndWhile
\end{algorithmic}
\end{algorithm}

\subsection{Choice of $\xi$}
\label{sec:choice_xi}

The choice of $\xi$ is performed using holdout validation.
We consider two mutually exclusive datasets $\mathcal{D}_M^{\rm train} = \{ (\ell_m^o, y_m)  \}_{m=1}^M$ and
$\mathcal{D}_I^{\rm val} = \{ (\ell_i^o, y_i)  \}_{i=1}^I$
;
given the finite-dimensional search space $\Xi \subset \mathbb{R}_+$, we then select $\xi^{\star}$ such that
$$
\xi^{\star}
=
{\rm arg} \min_{\xi \in \Xi }
\,
\widehat{{\rm MSE}}(I)
:=
\frac{1}{I}
\sum_{i=1}^I \, \left( y_i - \ell_i^o(u_{\xi}^{\star}   ) \right)^2,
$$
where $u_{\bar{\xi}}^{\star} $ is the PBDW solution based on the training dataset $\mathcal{D}_M^{\rm train}$, for $\xi = \bar{\xi}$.

In this work, we consider  validation measurements of the form 
$\{ y_i =  \ell(u, x_i^{\rm obs}, r_{\rm w} )$  $+ \epsilon_i  \}_{i=1}^I$, 
where $\ell $ is introduced in \eqref{eq:observation_functional}, and
$x_1^{\rm obs}, \ldots, x_I^{\rm obs}$ are independent realizations of an uniformly-distributed random variable over $\Omega$, $X \sim {\rm Uniform}(\Omega)$.
 If $r_{\rm w}$ is small, for this choice of the observation centers, and assuming that 
$\epsilon_1, \ldots,\epsilon_I$ are independent realizations of $\epsilon  \sim (0,\sigma^2)$,  it is possible to show that  (see \cite{taddeivalidation})
$$
\mathbb{E} \left[
\widehat{{\rm MSE}}(I)
\right]
= 
\frac{1}{|\Omega|}
\,
\| u^{\rm true} - u_{\xi}^{\star} \|_{L^2(\Omega)}^2 + \sigma^2
+
C(r_{\rm w}, u^{\rm true} - u_{\xi}^{\star}),
$$
where $C(r_{\rm w}, u^{\rm true} - u_{\xi}^{\star}) \to 0$ as $r_{\rm w} \to 0^+$ if 
$\nabla (u^{\rm true} - u_{\xi}^{\star}) \in L^q(\Omega)$, $q>d$.
Therefore, for sufficiently large values of $I$, the validation procedure approximately minimizes the $L^2$ error.

The choice of the number of validation measurements is a trade-off between experimental cost (given by the number of transducers dedicated to validation) and reliability of the validation procedure.
$\kappa$-fold cross-validation 
(see, e.g., \cite[Chapter 7]{hastie2009elements} and  \cite{kohavi1995study})
reduces the experimental cost by partitioning the training  dataset $\mathcal{D}_M$ into $\kappa$ equal-sized subsamples (folds) $\{\mathcal{D}_M^{(k)}\}_{k=1}^{\kappa}$. Of the $\kappa$ folds,  a single fold is retained for testing and the remaining $\kappa-1$ folds are used for training. The procedure is then repeated $\kappa$ times with each of the $\kappa$ folds used once as the validation dataset.
After having selected the hyper-parameter $\xi$, the entire dataset is finally used for training.
We emphasize that cross-validation
relies on the assumption that the pairs $(x_m^{\rm obs}, y_m)$ 
are independently sampled from the joint distribution of inputs and outputs
(\emph{random design}).
On the other hand, in our framework the training centers $\{ x_m^{\rm obs} \}_{m=1}^M$ are chosen deterministically 
(\emph{fixed design}).
For this reason, in this work, we simply consider holdout validation with $I=M/2$; we refer to a future work for more advanced cross-validation strategies.

\subsection{Extension to vector-valued problems}
\label{sec:vector_valued_problems}

We can trivially extend the Greedy procedure for the selection of the observation centers to vector-valued fields
$u^{\rm true} = [u_1^{\rm true}, \ldots, u_J^{\rm true}] \in \mathcal{X}=\mathcal{X}(\Omega; \mathbb{R}^J)$.
Assuming that each transducer is able to measure all $J$ components of the true field in all $d$ directions, at each iteration of the Greedy algorithm, we select $x_m^{\rm obs}$ such that
(compare with Algorithm \ref{SGreedy_plus}, step 6)
$$
x_{m+1}^{\rm obs} = {\rm arg} \max_{x \in \overline{\Omega} } 
\,
\| z_{\rm min}(x) - \mathcal{I}_m (z_{\rm min}) (x)  \|_2.
$$
Then, we add $J$ modes $\phi_1(\cdot, x_{m+1}^{\rm obs}), \ldots, \phi_J(\cdot, x_{m+1}^{\rm obs}) \in \mathcal{X}$ in the update; note that  the subspace $\mathcal{U}_m$ is $m \times J$ dimensional).

As regards the choice of the $J$ generators, we might consider the generator
\begin{subequations}
\begin{equation}
\label{eq:variational_gen_vec}
\phi_i(\cdot, x) = R_{\mathcal{X}} \ell_i(\cdot, x, R_{\rm w}),
\quad
{\rm where} \,
\ell_i(u,x, R_{\rm w}) = \ell(u_i, x,R_{\rm w}),
\quad
i=1,\ldots,J;
\end{equation}
and $R_{\rm w} \geq r_{\rm w}$. Alternatively, we might consider the update 
\begin{equation}
\label{eq:user_def_vec}
\phi_i(\cdot, x) =
\Phi(\| \cdot - x \|_2) \mathbf{e}_i,
\quad
i=1,\ldots,J;
\end{equation}
where $\Phi$ is a properly-defined kernel, and 
$\mathbf{e}_1,\ldots,\mathbf{e}_J$ are the canonical vectors in $\mathbb{R}^J$.
\end{subequations}

Note that, in the case of incompressible flows considered in section  \ref{sec:fluids},
for a proper choice of the ambient space $\mathcal{X}$, the  update space \eqref{eq:variational_gen_vec} is divergence-free, 
while the user-defined update space \eqref{eq:user_def_vec} is not divergence-free.
For this reason, we might resort to matrix-valued divergence-free kernels (see, e.g., \cite{wendland2009divergence}).
We refer to a future work for the use of matrix-valued divergence-free kernels in the PBDW framework.

\section{Error analysis}
\label{sec:analysis}
We present an error analysis for general linear functionals in the presence of noise, based on the identification of two sources:
the first source is related to the finite number of measurements, while the second source is related to the presence of noise.
For simplicity of notation, we consider scalar problems; however, the analysis can be trivially extended to vector-valued fields.
Given the measurements $\{ y_m = \ell_m^o(u^{\rm true}) + \epsilon_m  \}_{m=1}^M$, 
we define the vector 
$\mathbf{u}^{\rm opt} = [\tilde{\boldsymbol{\eta}}^{\rm opt}, \mathbf{z}^{\rm opt}] \in \mathbb{R}^{M+N}$ corresponding to the solution 
$u_{\xi=0}^{\rm opt}$ to  \eqref{eq:PBDW_two_field_formulation_noise_free}
fed with perfect observations $\{ y_m^{\rm true} = \ell_m^o(u^{\rm true}) \}_{m=1}^M$ (see  \eqref{eq:PBDW_algebraic_linear}).
On the other hand, we define  $\mathbf{u}_{\xi}^{\star} = [\tilde{\boldsymbol{\eta}}_{\xi}^{\star}, \mathbf{z}_{\xi}^{\star}]$ corresponding to the solution
${u}_{\xi}^{\star}$  to the stabilized formulation \eqref{eq:PBDW_two_field_formulation}, fed with  imperfect observations.

We first present the result for perfect measurements.

\begin{proposition}
\label{th:noise_free}
Let $y_m = \ell_m^o(u^{\rm true})$, $m=1,\ldots,M$, 
let $\beta_{N,M}>0$, and let $\mathcal{U}_M$ satisfy 
\eqref{eq:unisolvency_update}.
We denote by $u_{\xi=0}^{\rm opt}$ the solution to the PBDW formulation \eqref{eq:PBDW_two_field_formulation_noise_free}.
Then, the following estimates hold:
\begin{subequations}
\begin{equation}
\label{eq:estimate_noise_free}
\vertiii{ u^{\rm true} - u_{\xi=0}^{\rm opt} } 
\leq
\frac{1}{\beta_{N,M}}
\,
\inf_{z \in \mathcal{Z}_N} \, 
\inf_{  q \in \mathcal{U}_M \cap \mathcal{Z}_N^{\perp, \vertiii{\cdot} } } \,
\vertiii{  u^{\rm true}  - z - q   };
\end{equation}
\begin{equation}
\label{eq:estimate_noise_free_H1}
\|  u^{\rm true} - u_{\xi=0}^{\rm opt} \|
\leq
\frac{ \sqrt{4 + 6 \|  \mathcal{I}_M \|_{\mathcal{L}(\mathcal{X})}^2   }         }{\beta_{N,M}}
\,
\inf_{z \in \mathcal{Z}_N} \, 
\inf_{  q \in \mathcal{U}_M \cap \mathcal{Z}_N^{\perp, \vertiii{\cdot} } } \,
\|  u^{\rm true}  - z - q \|.
\end{equation}
\end{subequations}
\end{proposition}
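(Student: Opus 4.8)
The plan is to prove \eqref{eq:estimate_noise_free} working directly in the $((\cdot,\cdot))$--geometry, and then to obtain \eqref{eq:estimate_noise_free_H1} at once from the norm equivalence \eqref{eq:estimate_new_product}. Set $E := u^{\rm true} - u_{\xi=0}^{\rm opt}$, $W := \mathcal{U}_M \cap \mathcal{Z}_N^{\perp,\vertiii{\cdot}}$, and $V := \mathcal{Z}_N \oplus W$, which is a $\vertiii{\cdot}$--orthogonal sum since $W \subseteq \mathcal{Z}_N^{\perp,\vertiii{\cdot}}$. I would first record two facts about $E$. By Proposition \ref{th:variation_empirical_update}, applied in the noise--free limit $\xi \to 0^+$, $u_{\xi=0}^{\rm opt} \in V$. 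Moreover, the constraint in \eqref{eq:PBDW_two_field_formulation_noise_free} together with $y_m = \ell_m^o(u^{\rm true})$ gives $\ell_m^o(u_{\xi=0}^{\rm opt}) = \ell_m^o(u^{\rm true})$ for all $m$, hence $\mathcal{I}_M u_{\xi=0}^{\rm opt} = \mathcal{I}_M u^{\rm true}$, i.e. $\mathcal{I}_M E = 0$; since $\mathcal{I}_M = \Pi_{\mathcal{U}_M}^{\vertiii{\cdot}}$ by Lemma \ref{th:properties_inner_product}(6), this means $E \in \mathcal{U}_M^{\perp,\vertiii{\cdot}}$.

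The core of the argument is to bound the $\vertiii{\cdot}$--projection of $E$ onto $V$. Because $\Pi_V^{\vertiii{\cdot}} = \Pi_{\mathcal{Z}_N}^{\vertiii{\cdot}} + \Pi_W^{\vertiii{\cdot}}$ and $E \perp_{\vertiii{\cdot}} \mathcal{U}_M \supseteq W$, we get $\Pi_V^{\vertiii{\cdot}} E = \Pi_{\mathcal{Z}_N}^{\vertiii{\cdot}} E$. For $z \in \mathcal{Z}_N$ one has $((E, z)) = ((E, z - \mathcal{I}_M z))$ (as $((E,\mathcal{I}_M z))=0$), so by Cauchy--Schwarz $\vertiii{\Pi_{\mathcal{Z}_N}^{\vertiii{\cdot}} E} = \sup_{z \in \mathcal{Z}_N\setminus\{0\}} ((E, z - \mathcal{I}_M z))/\vertiii{z} \le \vertiii{E}\, \sup_{z \in \mathcal{Z}_N\setminus\{0\}} \vertiii{z - \mathcal{I}_M z}/\vertiii{z}$. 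Using $\vertiii{z - \mathcal{I}_M z}^2 = \vertiii{z}^2 - \vertiii{\mathcal{I}_M z}^2$ (Lemma \ref{th:properties_inner_product}(6)) and the fact that, again by Lemma \ref{th:properties_inner_product}(6), $\sup_{q\in\mathcal{U}_M} ((z,q))/\vertiii{q} = \vertiii{\mathcal{I}_M z}$ so that \eqref{eq:inf_sup_constant} reads $\beta_{N,M} = \inf_{z \in \mathcal{Z}_N\setminus\{0\}} \vertiii{\mathcal{I}_M z}/\vertiii{z}$ (cf. Remark \ref{norm_calIM}), that supremum equals $\sqrt{1 - \beta_{N,M}^2}$, whence $\vertiii{\Pi_V^{\vertiii{\cdot}} E}^2 \le (1-\beta_{N,M}^2)\vertiii{E}^2$. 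The Pythagorean identity for $((\cdot,\cdot))$ then yields $\inf_{v \in V}\vertiii{E - v}^2 = \vertiii{E}^2 - \vertiii{\Pi_V^{\vertiii{\cdot}} E}^2 \ge \beta_{N,M}^2\vertiii{E}^2$. To close, note that for every $v \in V$ we have $u^{\rm true} - v = E + (u_{\xi=0}^{\rm opt} - v)$ with $u_{\xi=0}^{\rm opt} - v$ ranging over all of $V$; hence $\inf_{v \in V}\vertiii{u^{\rm true} - v} = \inf_{w \in V}\vertiii{E - w} \ge \beta_{N,M}\vertiii{E}$, and writing $\inf_{v\in V} = \inf_{z\in\mathcal{Z}_N}\inf_{q\in W}$ gives \eqref{eq:estimate_noise_free}.

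Then \eqref{eq:estimate_noise_free_H1} follows by sandwiching: by \eqref{eq:estimate_new_product}, $\|E\| \le \sqrt{2}\,\vertiii{E}$ and $\vertiii{u^{\rm true}-z-q} \le \sqrt{2+3\|\mathcal{I}_M\|_{\mathcal{L}(\mathcal{X})}^2}\,\|u^{\rm true}-z-q\|$, and $\sqrt{2}\cdot\sqrt{2+3\|\mathcal{I}_M\|_{\mathcal{L}(\mathcal{X})}^2} = \sqrt{4+6\|\mathcal{I}_M\|_{\mathcal{L}(\mathcal{X})}^2}$, so chaining these with \eqref{eq:estimate_noise_free} and taking the infimum gives the claim. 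The step I expect to be most delicate is the reduction $\Pi_V^{\vertiii{\cdot}} E = \Pi_{\mathcal{Z}_N}^{\vertiii{\cdot}} E$ combined with the sharp evaluation $\sup_{z} \vertiii{z - \mathcal{I}_M z}/\vertiii{z} = \sqrt{1 - \beta_{N,M}^2}$: a careless estimate (e.g. bounding $\vertiii{z-\mathcal{I}_M z}$ by $\vertiii{z}$) produces only a constant of order $1/\beta_{N,M}^2$, and recovering the advertised factor $1/\beta_{N,M}$ relies precisely on exploiting $E \perp_{\vertiii{\cdot}} \mathcal{U}_M$ and the fact that $\mathcal{I}_M$ is the $((\cdot,\cdot))$--orthogonal projection onto $\mathcal{U}_M$, both of which come from Lemma \ref{th:properties_inner_product}.
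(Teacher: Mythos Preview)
Your proof is correct and follows the same high--level strategy as the paper: pass to the $((\cdot,\cdot))$--geometry via Lemma~\ref{th:properties_inner_product} and Proposition~\ref{th:variation_empirical_update}, establish \eqref{eq:estimate_noise_free} from the inf--sup constant $\beta_{N,M}$, and then deduce \eqref{eq:estimate_noise_free_H1} from the norm equivalence \eqref{eq:estimate_new_product}. The only difference is that the paper dispatches \eqref{eq:estimate_noise_free} by a direct citation of \cite[Remark~2.12]{binev2017data} (once Proposition~\ref{th:variation_empirical_update} has reduced the user--defined update to a variational update in the $\vertiii{\cdot}$--norm), whereas you give a self--contained argument: you exploit $E \perp_{\vertiii{\cdot}} \mathcal{U}_M$ (from $\mathcal{I}_M E = 0$ and Lemma~\ref{th:properties_inner_product}(6)) to reduce $\Pi_V^{\vertiii{\cdot}}E$ to $\Pi_{\mathcal{Z}_N}^{\vertiii{\cdot}}E$, and then use the identification $\beta_{N,M} = \inf_{z\in\mathcal{Z}_N}\vertiii{\mathcal{I}_M z}/\vertiii{z}$ together with Pythagoras to obtain the sharp factor $1/\beta_{N,M}$. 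This is precisely (a clean version of) the argument underlying the cited result, so what you gain is self--containment and some insight into why the constant is $1/\beta_{N,M}$ rather than $1/\beta_{N,M}^2$; what the paper gains is brevity. Your derivation of \eqref{eq:estimate_noise_free_H1} is identical to the paper's.
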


\begin{proof}
Thanks to the  variational interpretation of the user-defined update
(see Proposition \ref{th:variation_empirical_update}), 
proof of \eqref{eq:estimate_noise_free} follows  
directly from \cite[Remark 2.12]{binev2017data}.
On the other hand, exploiting 
\eqref{eq:estimate_noise_free} and
\eqref{eq:estimate_new_product}, we find
$$
\begin{array}{ll}
\|  u^{\rm true} - u_{\xi=0}^{\rm opt} \|
&
\displaystyle{
\leq \sqrt{2} \vertiii{ u^{\rm true} - u_{\xi=0}^{\rm opt} }
\leq \frac{\sqrt{2}}{\beta_{N,M}}
\,
\inf_{z \in \mathcal{Z}_N} \, 
\inf_{  q \in \mathcal{U}_M \cap \mathcal{Z}_N^{\perp, \vertiii{\cdot} } } \,
\vertiii{  u^{\rm true}  - z - q   }}
\\[3mm]
&
\displaystyle{
\leq
\frac{ \sqrt{4 + 6 \|  \mathcal{I}_M \|_{\mathcal{L}(\mathcal{X})}^2   }         }{\beta_{N,M}}
\,
\inf_{z \in \mathcal{Z}_N} \, 
\inf_{  q \in \mathcal{U}_M \cap \mathcal{Z}_N^{\perp, \vertiii{\cdot} } } \,
\|  u^{\rm true}  - z - q   \|,}
\\
\end{array}
$$
which proves  \eqref{eq:estimate_noise_free_H1}.
\end{proof}

If we introduce the matrices and vectors
$$
\mathbb{A}(\xi) = 
\left[
\begin{array}{cc}
\left( \xi M  \mathbb{I} + \mathbb{L}_{\eta}  \mathbb{L}_{\eta}^{T} \right)  & 
 \mathbb{L}_{z}  \ \\[3mm]
\mathbb{L}_{z}^T   & 0   \\
\end{array}
\right];
\quad
\mathbf{e}  =
\left[
\begin{array}{c}
\boldsymbol{\epsilon} \\
\mathbf{0}\\
\end{array}
\right];
\quad
\boldsymbol{\epsilon} = [\epsilon_1,\ldots,\epsilon_M]^T;
$$
we obtain the following identity that links $\mathbf{u}_{\xi}^{\star}$ to $ \mathbf{u}^{\rm opt}$
\begin{equation}
\label{eq:key_identity_uxi}
\mathbb{A}(\xi) \mathbf{u}_{\xi}^{\star}
=
\mathbb{A}(0) \mathbf{u}^{\rm opt}
+
 \mathbf{e}
 =
 \mathbb{A}(\xi) \mathbf{u}^{\rm opt}
-
 \xi M 
 \left[  
\begin{array}{c}
\tilde{\boldsymbol{\eta}}^{\rm opt} \\
\mathbf{0}
\end{array} 
  \right]
 +
 \mathbf{e}.
\end{equation}
Identity \eqref{eq:key_identity_uxi} can be used to estimate the bias of our estimate, and the expected error.
For simplicity, we present below estimates for the errors in the coefficients, 
$\mathbf{u}_{\xi}^{\star} - \mathbf{u}^{\rm opt}$;
the estimates can also be extended to compute the state estimation error  in an integral norm of interest. We omit the details.

\begin{proposition}
\label{th:stochastic_error}
Suppose that $\epsilon_1,\ldots,\epsilon_M$ are independent realizations of
$\epsilon \sim (0,\sigma^2)$. Then, the following hold:
\begin{equation}
\label{eq:bias_xi}
\|   \mathbb{E}[  \mathbf{u}_{\xi}^{\star}   ]    -    \mathbf{u}^{\rm opt}   \|_2
\leq
\frac{\xi M}{s_{\rm min} ( \mathbb{A}(\xi)  )  }
\,
\|  \tilde{\boldsymbol{\eta}}^{\rm opt}  \|_2;
\end{equation}
and
\begin{equation}
\label{eq:variance_xi}
\mathbb{E} 
\left[
\|   \mathbf{u}_{\xi}^{\star} -  \mathbf{u}^{\rm opt}      \|_2^2
\right]
\leq
\left(
\frac{\xi M}{s_{\rm min} ( \mathbb{A}(\xi)  )  }
\,
\|  \tilde{\boldsymbol{\eta}}^{\rm opt}  \|_2
\right)^2
+
\sigma^2
{\rm trace} \left(
\mathbb{A}(\xi)^{-1}
\Sigma
\mathbb{A}(\xi)^{-T}
\right);
\end{equation}
where
$s_{\rm min}( \mathbb{A}(\xi) )$ denotes the minimum singular value of 
$\mathbb{A}(\xi)$, 
 $\Sigma = 
\left[
\begin{array}{cc}
 \mathbb{I} & 0   \\
0  & 0   \\
\end{array}
\right].$
\end{proposition}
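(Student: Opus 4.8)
The plan is to work entirely from the identity \eqref{eq:key_identity_uxi}, which already isolates the two error contributions: a deterministic bias term proportional to $\xi M \, \tilde{\boldsymbol{\eta}}^{\rm opt}$ and a stochastic term proportional to $\mathbf{e}$. First I would rearrange \eqref{eq:key_identity_uxi} to express the coefficient error directly. Since $\mathbb{A}(\xi)$ is invertible whenever $\beta_{N,M}>0$ (the saddle-point system is nonsingular, as noted after \eqref{eq:PBDW_algebraic_linear}), we get
\begin{equation*}
\mathbf{u}_{\xi}^{\star} - \mathbf{u}^{\rm opt}
=
-\,\xi M \, \mathbb{A}(\xi)^{-1}
\begin{bmatrix} \tilde{\boldsymbol{\eta}}^{\rm opt} \\ \mathbf{0} \end{bmatrix}
+
\mathbb{A}(\xi)^{-1} \mathbf{e}.
\end{equation*}
Taking expectations and using $\mathbb{E}[\boldsymbol{\epsilon}] = \mathbf{0}$ kills the second term, leaving $\mathbb{E}[\mathbf{u}_{\xi}^{\star}] - \mathbf{u}^{\rm opt} = -\xi M \,\mathbb{A}(\xi)^{-1}[\tilde{\boldsymbol{\eta}}^{\rm opt};\mathbf{0}]$. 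Then \eqref{eq:bias_xi} follows from the operator-norm bound $\|\mathbb{A}(\xi)^{-1}\|_2 = 1/s_{\rm min}(\mathbb{A}(\xi))$ together with $\|[\tilde{\boldsymbol{\eta}}^{\rm opt};\mathbf{0}]\|_2 = \|\tilde{\boldsymbol{\eta}}^{\rm opt}\|_2$.

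For \eqref{eq:variance_xi} I would perform the standard bias-variance decomposition. Write $\mathbf{u}_{\xi}^{\star} - \mathbf{u}^{\rm opt} = \mathbf{b} + \mathbf{v}$, where $\mathbf{b} = -\xi M\,\mathbb{A}(\xi)^{-1}[\tilde{\boldsymbol{\eta}}^{\rm opt};\mathbf{0}]$ is deterministic and $\mathbf{v} = \mathbb{A}(\xi)^{-1}\mathbf{e}$ has mean zero. Then $\mathbb{E}[\|\mathbf{b}+\mathbf{v}\|_2^2] = \|\mathbf{b}\|_2^2 + \mathbb{E}[\|\mathbf{v}\|_2^2]$ since the cross term $2\,\mathbf{b}^T\mathbb{E}[\mathbf{v}]$ vanishes. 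The first term is bounded exactly as in \eqref{eq:bias_xi}. For the second, $\mathbb{E}[\|\mathbf{v}\|_2^2] = \mathbb{E}[\mathbf{e}^T \mathbb{A}(\xi)^{-T}\mathbb{A}(\xi)^{-1}\mathbf{e}] = {\rm trace}(\mathbb{A}(\xi)^{-1}\,\mathbb{E}[\mathbf{e}\mathbf{e}^T]\,\mathbb{A}(\xi)^{-T})$, using the cyclic property of the trace and $\mathbf{e}^T A \mathbf{e} = {\rm trace}(A\,\mathbf{e}\mathbf{e}^T)$. Since $\epsilon_1,\ldots,\epsilon_M$ are independent with variance $\sigma^2$ and $\mathbf{e} = [\boldsymbol{\epsilon};\mathbf{0}]$, we have $\mathbb{E}[\mathbf{e}\mathbf{e}^T] = \sigma^2\,\Sigma$ with $\Sigma$ the indicated block matrix, giving $\mathbb{E}[\|\mathbf{v}\|_2^2] = \sigma^2\,{\rm trace}(\mathbb{A}(\xi)^{-1}\Sigma\,\mathbb{A}(\xi)^{-T})$. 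Adding the two contributions yields \eqref{eq:variance_xi}.

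This proof is essentially bookkeeping, so I do not anticipate a genuine obstacle; the only point requiring a little care is the matrix identity \eqref{eq:key_identity_uxi} itself — one should double-check that $\mathbf{u}^{\rm opt}$ defined via the noise-free system \eqref{eq:PBDW_algebraic_linear} satisfies $\mathbb{A}(0)\mathbf{u}^{\rm opt} = [\mathbf{y}^{\rm true};\mathbf{0}]$ and that $\mathbf{u}_{\xi}^{\star}$ satisfies $\mathbb{A}(\xi)\mathbf{u}_{\xi}^{\star} = [\mathbf{y};\mathbf{0}] = [\mathbf{y}^{\rm true} + \boldsymbol{\epsilon};\mathbf{0}]$, after which the displayed identity is just $\mathbb{A}(0)\mathbf{u}^{\rm opt} = \mathbb{A}(\xi)\mathbf{u}^{\rm opt} - \xi M\,[\tilde{\boldsymbol{\eta}}^{\rm opt};\mathbf{0}]$, which follows from $\mathbb{A}(\xi) - \mathbb{A}(0) = \xi M\,\Sigma$. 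Everything else is the triangle inequality, submultiplicativity of the spectral norm, and linearity of expectation; one should just be mindful that the bound \eqref{eq:variance_xi} is stated with ``$\leq$'' rather than ``$=$'' only because of the bias estimate, the variance part being an exact identity.
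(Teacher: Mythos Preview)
Your proof is correct and follows essentially the same route as the paper: both start from identity \eqref{eq:key_identity_uxi}, take expectations to obtain the bias bound via $\|\mathbb{A}(\xi)^{-1}\|_2 = 1/s_{\rm min}(\mathbb{A}(\xi))$, and then perform the bias--variance decomposition, with the variance term computed as a trace. The only cosmetic difference is that the paper cites a textbook result for the variance identity whereas you derive it directly via $\mathbb{E}[\mathbf{e}\mathbf{e}^T]=\sigma^2\Sigma$ and the cyclic property of the trace.
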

\begin{proof}
Exploiting \eqref{eq:key_identity_uxi}, we find
$$
\mathbb{A}(\xi) \mathbf{u}_{\xi}^{\star}
 =
 \mathbb{A}(\xi) \mathbf{u}^{\rm opt}
-
 \xi M 
 \left[  
\begin{array}{c}
\tilde{\boldsymbol{\eta}}^{\rm opt} \\
\mathbf{0}
\end{array} 
  \right]
 +
 \mathbf{e}
 \,
 \Rightarrow
 \,
 \|   \mathbb{E}[  \mathbf{u}_{\xi}^{\star}   ]    -    \mathbf{u}^{\rm opt}   \|_2
\leq
\frac{\xi M}{s_{\rm min} ( \mathbb{A}(\xi)  )  }
\,
\|  \tilde{\boldsymbol{\eta}}^{\rm opt}  \|_2;
 $$
which is \eqref{eq:bias_xi}.

In order to show \eqref{eq:variance_xi}, we first observe that
(cf. \cite[Thm. C, Chapter 14.4]{rice2006mathematical}):
$$
\mathbb{E} \left[
\| 
\mathbb{E}[\mathbf{u}_{\xi}^{\star}  ]  -
\mathbf{u}_{\xi}^{\star}
\|_2^2
\right]
=
\sigma^2
{\rm trace} \left(
 \mathbb{A}(\xi)^{-1} \Sigma
  \mathbb{A}(\xi)^{-T} 
\right).
$$
Then, 
we find
$$
\begin{array}{ll}
\mathbb{E} 
\|   \mathbf{u}_{\xi}^{\star}  - \mathbf{u}^{\rm opt}  \|_2^2 
=
&
 \|  \mathbb{E}[\mathbf{u}_{\xi}^{\star}  ]  - \mathbf{u}^{\rm opt}  \|_2^2
 +
\mathbb{E} \left[
\|  \mathbb{E}[\mathbf{u}_{\xi}^{\star}  ]  - \mathbf{u}_{\xi}^{\star}  \|_2^2
\right]
\\[3mm]
&
\leq 
\left(
\frac{\xi M}{s_{\rm min} ( \mathbb{A}(\xi)  )  }
\,
\|  \tilde{\boldsymbol{\eta}}^{\rm opt}    \|_2
\right)^2
+
\sigma^2
{\rm trace} \left(
\mathbb{A}(\xi)^{-1}
\Sigma
\mathbb{A}(\xi)^{-1}
\right),
\\
\end{array}
$$
which is  \eqref{eq:variance_xi}.
\end{proof}

We observe that the bound for the mean squared error
\eqref{eq:variance_xi} is the sum of two contributions:
the former 
--- 
$\frac{\xi M}{s_{\rm min} ( \mathbb{A}(\xi)  )  } \,\|  \tilde{\boldsymbol{\eta}}^{\rm opt}  \|_2$
---
involves the accuracy of the background space,
measured by $\| \tilde{\boldsymbol{\eta}}^{\rm opt} \|_2$, 
and is monotonically increasing with $\xi$;
the latter 
---
$\sigma^2
{\rm trace} \left( \mathbb{A}(\xi)^{-1} \Sigma \mathbb{A}(\xi)^{-T} \right)$
---
involves the accuracy of the measurements, and is monotonically decreasing with $\xi$ according to our numerical experience.
Therefore, the optimal value of $\xi$ depends on the ratio between ``measurement inaccuracy"
and ``model  inaccuracy"
$\frac{\sigma}{\|  \tilde{\boldsymbol{\eta}}^{\rm opt}  \|_2}$:
 if $u^{\rm true} \in \mathcal{Z}_N$, estimate \eqref{eq:variance_xi} suggests to pick   $\xi \to \infty$; 
 if $\sigma=0$, estimate \eqref{eq:variance_xi} suggests to pick  
 $\xi \to 0^+$. 
 Since in practice estimates of the ratio $\frac{\sigma}{\|  \tilde{\boldsymbol{\eta}}^{\rm opt}  \|_2}$
 are rarely available, our analysis suggests that the value of $\xi$ should be chosen adaptively using (cross-)validation techniques such as the one discussed in section \ref{sec:choice_xi}.
These observations are in agreement with \cite[Remark 3.3]{taddei_APBDW}.

\begin{remark}
(\textbf{extension  to  complex-valued problems})
Proposition \ref{th:stochastic_error} can  be trivially extended to the complex-valued case. Assuming that measurements are of the form
$$
y_m = \ell_m^o(u^{\rm true}) + \sigma_{\rm re} \epsilon_m^{\rm re}  + {\rm i} \sigma_{\rm im} \epsilon_m^{\rm im},
\qquad
\epsilon_m^{\rm re}, \epsilon_m^{\rm im} 
\overset{\rm i.i.d.}{\sim} (0,1), 
\quad
\sigma_{\rm re}, \sigma_{\rm im}>0,
$$
we find the same estimate for the bias \eqref{eq:bias_xi}, and we find
\begin{equation}
\label{eq:variance_xi_complex}
\begin{array}{ll}
\mathbb{E}[ \|  \mathbf{u}_{\xi}^{\star} - \mathbf{u}^{\rm opt}    \|_2^2    ]
\leq
&
\displaystyle{
\left(
\frac{\xi M}{s_{\rm min}( \tilde{\mathbb{A}}(\xi) )  }
 \|  \widetilde{\boldsymbol{\eta}}^{\rm opt}    \|_2
\right)^2
+
\sigma_{\rm re}^{2}
{\rm trace} 
\left( 
\tilde{\mathbb{A}}(\xi)^{-1} \Sigma^{\rm re}  \tilde{\mathbb{A}}(\xi)^{-T}
\right)
}
\\[3mm]
&
\displaystyle{
+
\sigma_{\rm im}^{2}
{\rm trace} 
\left( 
\tilde{\mathbb{A}}(\xi)^{-1} \Sigma^{\rm im}  \tilde{\mathbb{A}}(\xi)^{-T}
\right);
}
\end{array}
\end{equation}
where 
$\tilde{\mathbb{A}}(\xi) = \left[ 
\begin{array}{cc}
{\rm Re}[{\mathbb{A}}(\xi)] & -{\rm Im}[{\mathbb{A}}(\xi)] \\
{\rm Im}[{\mathbb{A}}(\xi)] & {\rm Re}[{\mathbb{A}}(\xi)] \\
\end{array}
 \right]$,
 $\Sigma^{\rm re} = \left[ 
\begin{array}{cc}
\Sigma & 0 \\
0 & 0\\
\end{array}
 \right]$,
 $\Sigma^{\rm im} = \left[ 
\begin{array}{cc}
0 & 0 \\
0 & \Sigma \\
\end{array}
 \right]$.
\end{remark}

  \section{Application to Acoustics}
 \label{sec:acoustics}
 
 We illustrate the behavior of the PBDW formulation presented in this paper through the vehicle of two acoustic Helmholtz problems.
Since the objective of this work is to propose user-defined  update spaces that improve convergence with respect to the number of observations ($M$-convergence), we do not study the effect of the primary approximation provided by the background $\mathcal{Z}_N$: we refer to the PBDW literature for further results concerning the effect of the primary approximation.

\subsection{A two-dimensional model problem}
The model problem is the same considered in 
\cite[Section 3]{maday2015parameterized}, and \cite[Section 5]{taddei_APBDW}. 

\subsubsection{Problem definition}
Given the domain 
$\Omega = (0,1)^2$, we define the acoustic model problem: 
\begin{equation}
\label{eq:PBDW_numerics1_model}
\left\{
\begin{array}{ll}
-(1 + {\rm i} \epsilon\mu) \, \Delta u_g(\mu) \, - \, \mu^2 u_g(\mu) = \, \mu \left(  2 x_1^2 + e^{x_2} \right) \, + \, \mu g & \mbox{in} \; \Omega, \\[3mm]
\partial_n \, u_g(\mu) = 0 & \mbox{on} \, \partial \Omega, \\
\end{array}
\right.
\end{equation}
where 
${\rm i}$ is the imaginary unit, 
$\mu>0$ is the wave number, $\epsilon=10^{-2}$ is a fixed dissipation,  and $g \in L^2(\Omega)$ is a bias term that will be specified later.
Here, the parameter $\mu>0$ constitutes the anticipated, parametric uncertainty in the system, which might model our uncertainty in the speed of sound, while the function $g$ constitutes the unanticipated (non-parametric) uncertainty in the system.

To assess the performance of the PBDW formulation for various configurations,
we define the true field $u^{\rm true}$ as
the solution to \eqref{eq:PBDW_numerics1_model} for some $\mu^{\rm true} \in \mathcal{P}^{\rm bk} $ and for the following   choice  of the \emph{bias} $g$
\begin{subequations}
\label{eq:bias_numerics1_PBDW}
\begin{equation}
\label{eq:bias_a}
g:=  0.5 ( e^{-x_1} \, + \, 1.3 \cos ( 1.3 \pi x_2  )  ).
\end{equation}
On the other hand, we define the bk manifold as
\begin{equation}
\mathcal{M}^{\rm bk} := \; \{  u_{g=0}(\mu): \; \mu \in \mathcal{P}^{\rm bk}  \}.
\end{equation}
\end{subequations}

To measure  performance, we introduce  the  relative  $L^2$  and $H^1$ errors  averaged over $|\mathcal{P}_{\rm test}^{\rm bk}| = n_{\rm test}$  fields associated with different choices of the parameter $\mu$:
\begin{equation}
\label{eq:E_avg_APBDW}
E_{\rm avg}^{\rm rel}( n_{\rm test} )   := \frac{1}{  n_{\rm test} } \, \sum_{ \mu \in \mathcal{P}_{\rm test}^{\rm bk}  } \, \frac{\| u^{\rm true}(\mu) - u_{\xi}^{\star}(\mu)  \|_{\star} }{  \| u^{\rm true}(\mu)\|_{\star}  };
\qquad
\| \cdot \|_{\star} = 
\| \cdot \|_{L^2(\Omega)} \, {\rm or} \, \| \cdot \|_{H^1(\Omega)}.
\end{equation}
In all our tests, we consider $n_{\rm test}=10$ equispaced parameters in $\mathcal{P}^{\rm bk}$.

We model the (synthetic) observations by a Gaussian convolution 
with standard deviation $r_{\rm w}$:
\begin{subequations}
\label{eq:exp_observations}
\begin{equation}
\ell_m^o(v) 
= {\rm Gauss} (v, x_m^{\rm obs}; r_{\rm w})
=
C(x_m^{\rm obs}) \, \int_{\Omega} \, e^{-\frac{1}{2 r_{\rm w}^2 } \| x - x_m^{\rm obs} \|_2^2  } \, v(x ) \, dx,
\quad
m=1,\ldots,M;
\end{equation}
where $C(x_m^{\rm obs}) $ is a normalization constant such that $\ell_m^o(1)  = 1$, and
 $x_m^{\rm obs}$ denotes the transducer location.
 In order to simulate noisy measurements, we corrupt the synthetic measurements with homoscedastic random noise:
 \begin{equation}
 y_{\ell} =  \ell_m^o(u^{\rm true}) + 
\sigma^{\rm re} \epsilon_{\ell}^{\rm re} +  {\rm i} \sigma^{\rm im}  \epsilon_{\ell}^{\rm im},
\qquad
\epsilon_{\ell}^{\rm re} , \epsilon_{\ell}^{\rm im}   \,
\overset{ \rm iid}{\sim} 
\,
\mathcal{N}(0,1);
 \end{equation}
 where 
 \begin{equation}
 \sigma^{\rm re}  ={\rm  SNR} \times  {\rm std} \left(  \{  {\rm Re}[    \ell_m^o(u^{\rm true})   ]   \}_{m=1}^M   \right),
\quad
 \sigma^{\rm im}  = 
{\rm  SNR} \times 
 {\rm std} \left(  \{  {\rm Im}[    \ell_m^o(u^{\rm true})   ]   \}_{m=1}^M   \right),
  \end{equation}
  and ${\rm  SNR}$ denotes the signal-to-noise ratio in the measurements.
\end{subequations}

\subsubsection{PBDW spaces}

We introduce the ambient space $\mathcal{X}= H^1(\Omega)$ endowed with the inner product:
\begin{equation}
\label{eq:inner_product}
(u,v) = \int_{\Omega} \, u \bar{v} + \nabla u \cdot \nabla \bar{v} \, dx.
\end{equation}
Note that $\bar{(\cdot)}$ denotes the complex conjugate of $(\cdot)$. The background space 
$\mathcal{Z}_N$ is built using the Weak-Greedy algorithm (see, e.g., \cite{rozza2008reduced}):
we refer to \cite{maday2015parameterized} for further details.

As regards the update space, we consider the variational update associated with the inner product \eqref{eq:inner_product}, and the user-defined updates 
$\mathcal{U}_M = {\rm span} \{  \phi (   \lambda \|  \cdot - x_m^{\rm obs} \|_2   )   \}_{m=1}^M$ for 
$$
\left\{
\begin{array}{ll}
\displaystyle{  \phi(r) = \frac{1}{(1+r^2)^2} } & {\rm inverse \, multiquadrics}; \\[3mm]
\displaystyle{  \phi(r) =  (1-r)_+^4 (4r + 1)  } & {\rm csRBF}. \\
\end{array}
\right.
$$
The observation centers $\{  x_m^{\rm obs} \}_{m=1}^M$ are chosen according to  Algorithm \ref{SGreedy_plus}.
As regards the choice of the kernel scale, we here set  $\lambda=1$ for inverse multiquadrics, and $\lambda=2$ for csRBF. We remark that the optimal value for the kernel scale parameter $\lambda$ strongly depends on the number of measurements, and also on the characteristic length-scale of the field $u^{\rm true} - z_{\xi}^{\star}$: therefore, adaptation of the parameter $\lambda$ might improve performance, particularly for large values of $M$.

\subsubsection{Numerical results}

In Figures \ref{fig:PBDW_Sgreedy}(a)-(b)-(c),
we show the behavior of the inf-sup constant $\beta_{N,M}$ with $M$ 
 for the three different choices of the update space (and thus for the three choices of the norm $\vertiii{\cdot}$),
$r_{\rm w}=0.01$ and $N=6$, and for 
 centers $\{ x_m^{\rm obs} \}_{m=1}^{M}$ selected by the SGreedy algorithm with $tol=0.6$;
 to measure performance, we compare the results with the ones obtained using  randomly-generated centers. For the second choice, we average over $35$ different random choices of the $M$ centers. 
 We observe that the SGreedy selection of the sensor locations leads to a more stable formulation compared to randomly-generated points.
 In Figures \ref{fig:PBDW_Sgreedy}(d)-(e)-(f),
 we  show the centers $\{ x_m^{\rm obs} \}_{m=1}^{M}$ selected by the SGreedy algorithm for the three different choices of the update space: 
for all the three choices of the update space the Greedy procedure selects most points along the principal diagonal $(0,0) \to (1,1)$.  

\begin{figure}[h!]
\centering
\subfloat[ $H^1$] {\includegraphics[width=0.32\textwidth]
 {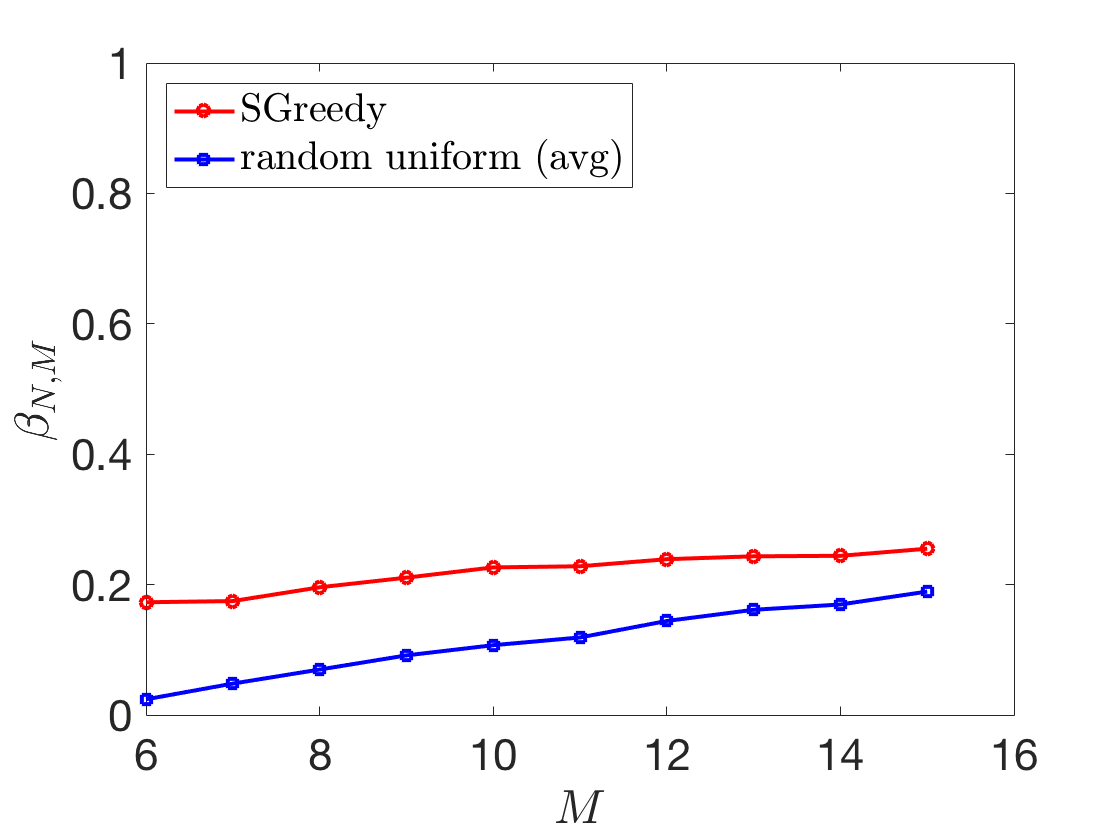}}
 ~
\subfloat[ csRBF ] {\includegraphics[width=0.32\textwidth]
 {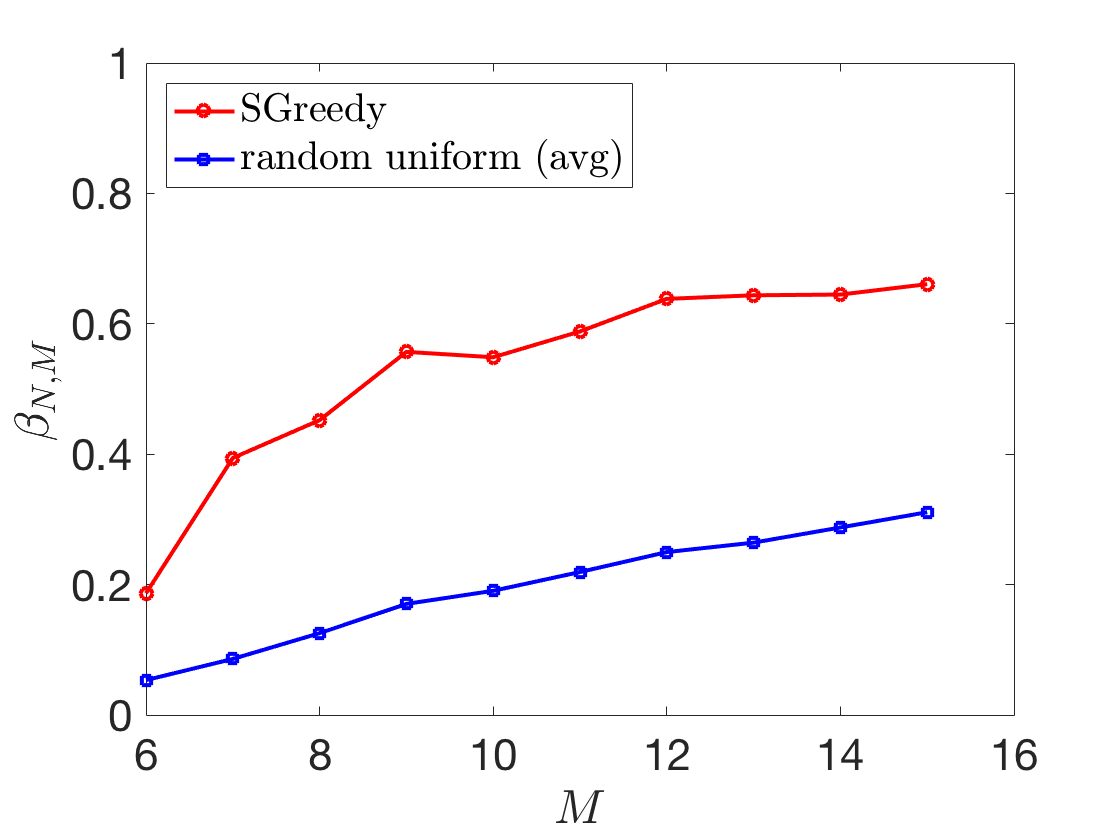}}
 ~
\subfloat[ inverse multiquadrics] {\includegraphics[width=0.32\textwidth]
 {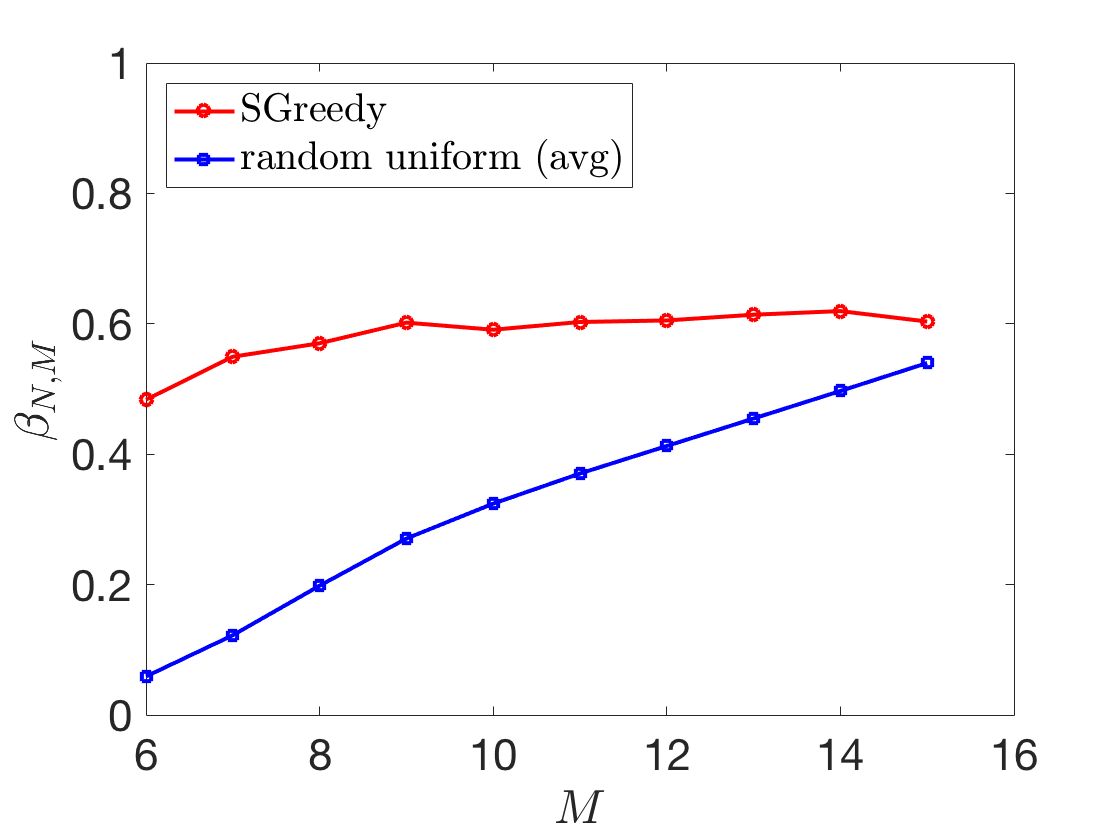}}
  
\subfloat[ $H^1$] {\includegraphics[width=0.32\textwidth]
 {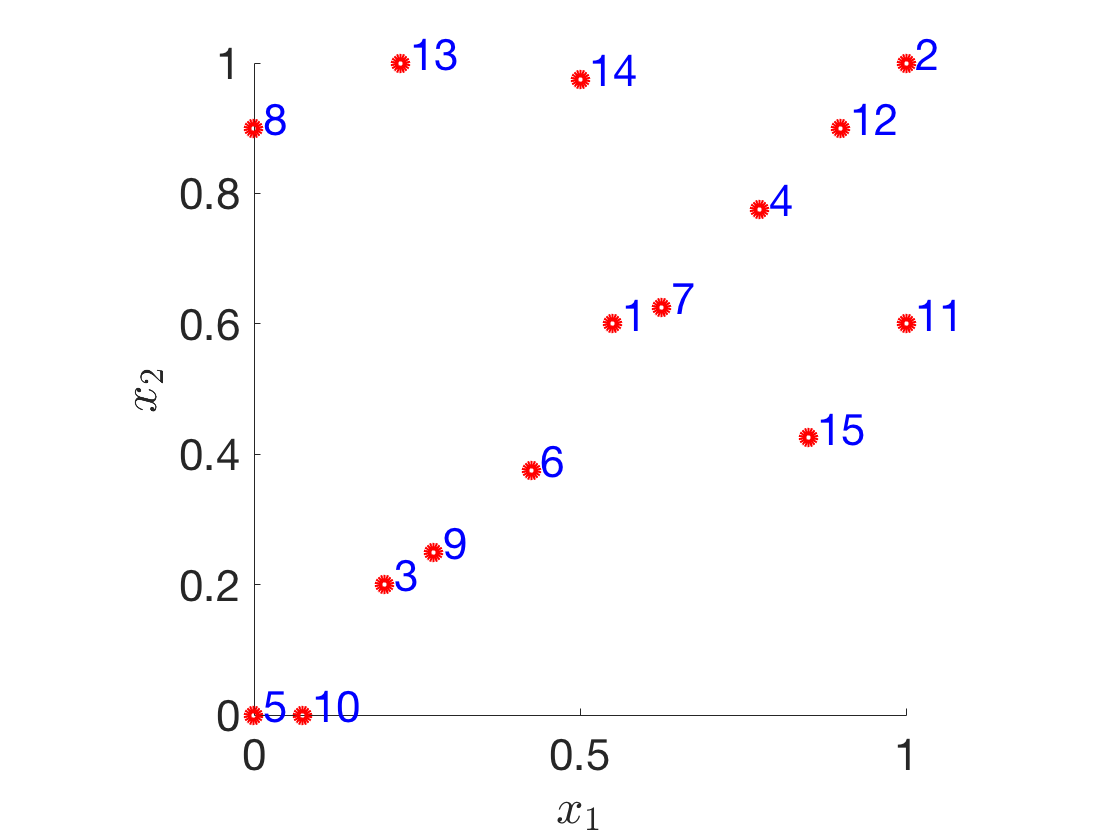}}
 ~
\subfloat[ csRBF ] {\includegraphics[width=0.32\textwidth]
 {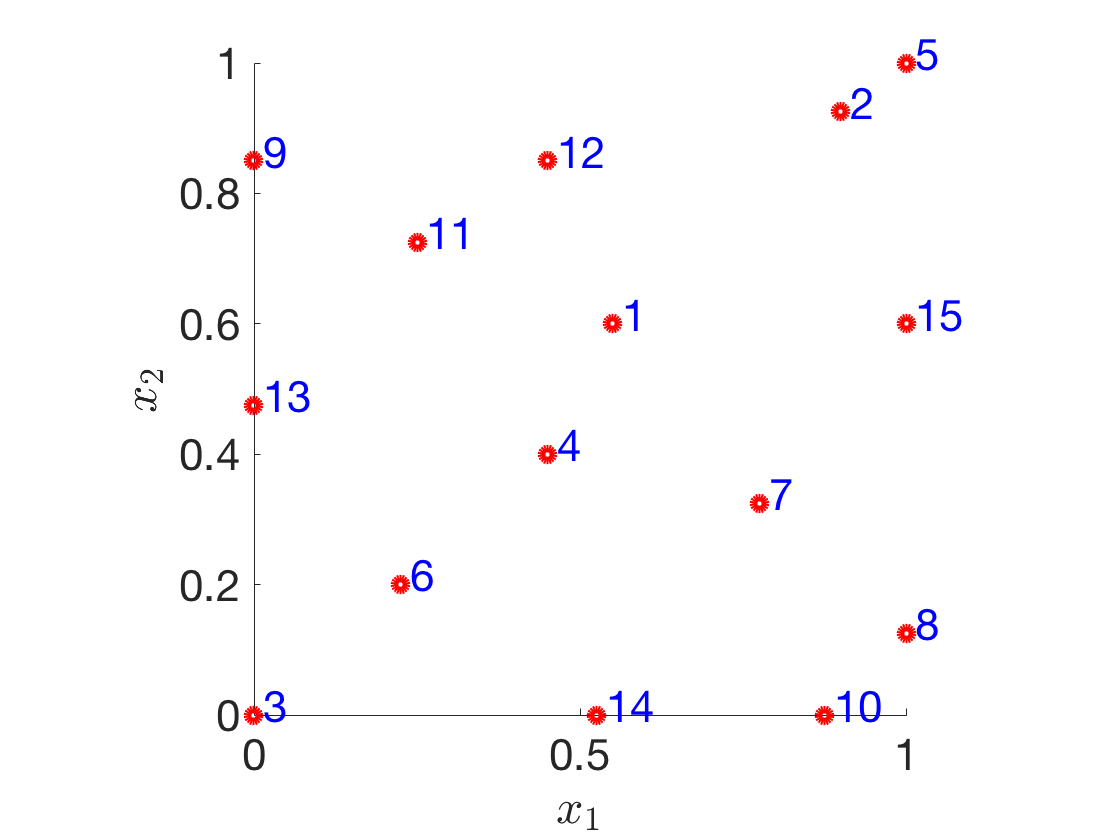}}
 ~
\subfloat[ inverse multiquadrics] {\includegraphics[width=0.32\textwidth]
 {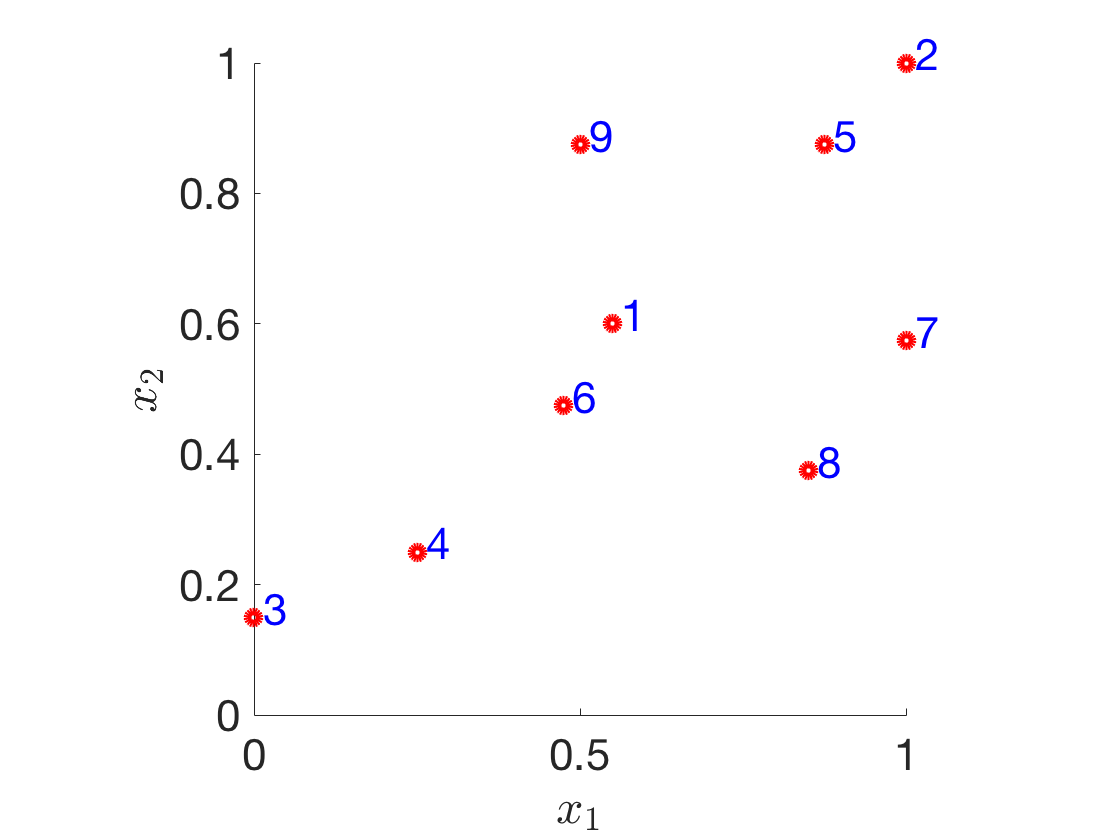}}

\caption{  Application to a  two-dimensional acoustic problem:
 application of the SGreedy algorithm ($N=6$, $r_{\rm w}=0.01$, $tol = 0.6$). Results for random centers are averaged over $35$ random trials.
  }
 \label{fig:PBDW_Sgreedy}
\end{figure}

In Figure \ref{fig:PBDW_Sgreedy_long}, we show the behavior of $\beta_{N=6,M}$ fo three
choices of the update space considered, for $M=6,\ldots,150$.
We observe that, for the user-defined update, $\beta_{N,M}$ is not monotonic increasing with $M$.
Nevertheless, we do not observe pathologic behaviors of the inf-sup constant as $M$ increases.

\begin{figure}[h!]
\centering
\subfloat[ $H^1$] {\includegraphics[width=0.32\textwidth]
 {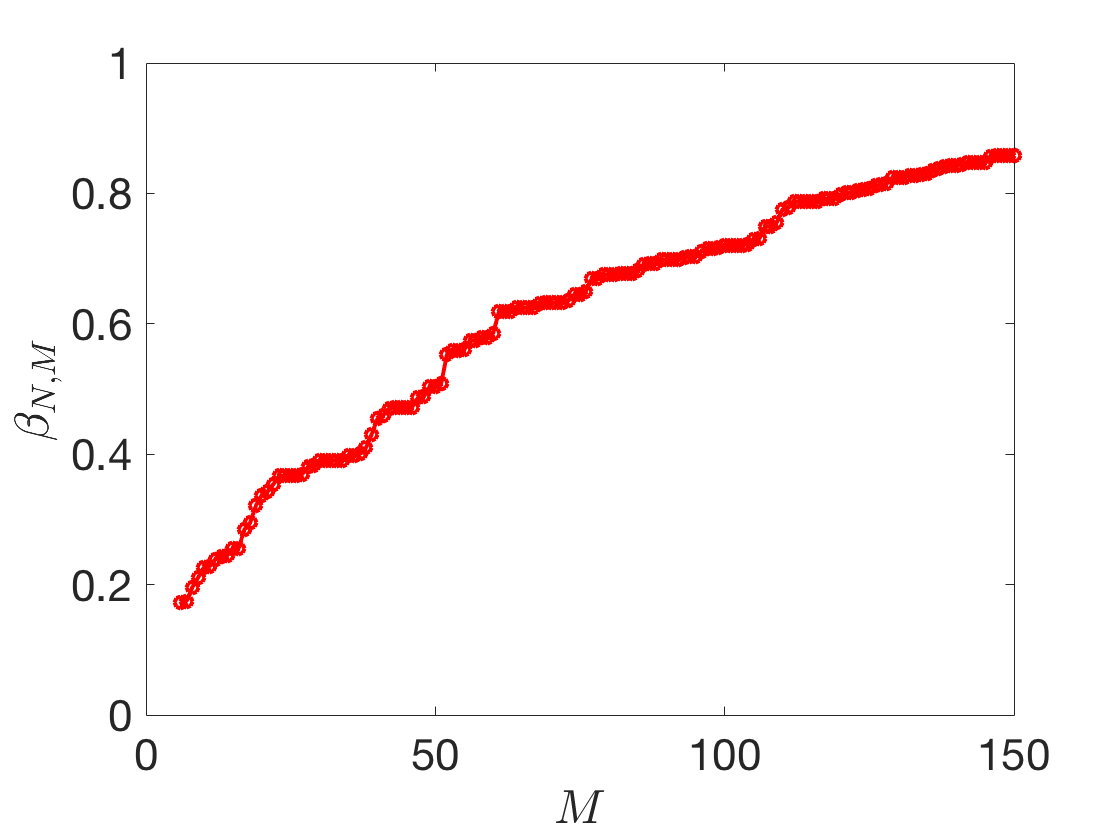}}
 ~
\subfloat[ csRBF ] {\includegraphics[width=0.32\textwidth]
 {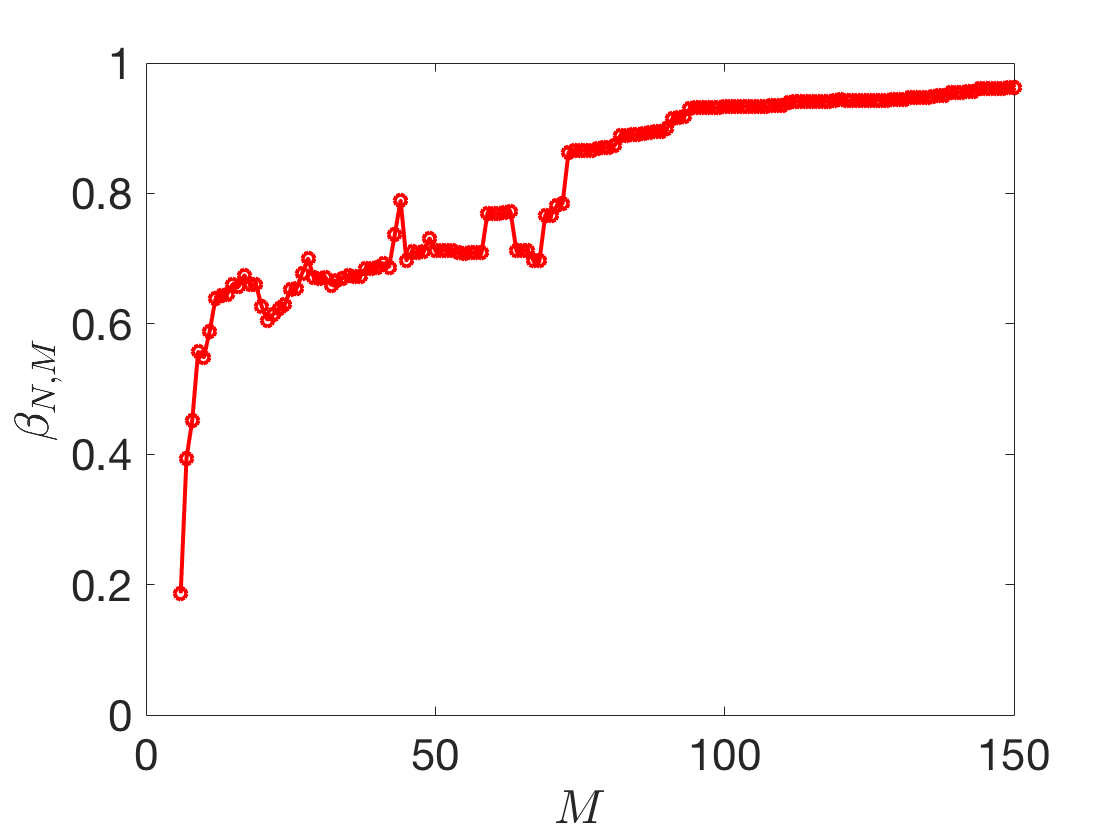}}
 ~
\subfloat[ inverse multiquadrics] {\includegraphics[width=0.32\textwidth]
 {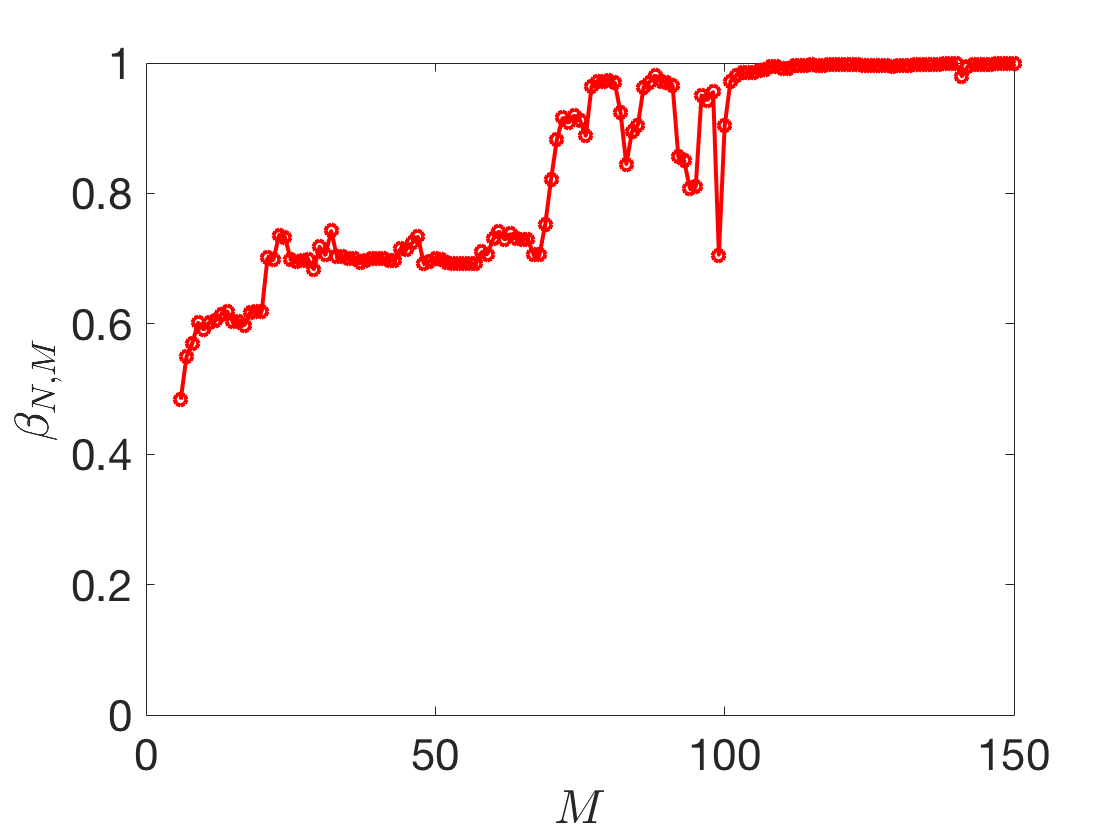}}
  
\caption{  Application to a  two-dimensional acoustic problem:
behavior of  $\beta_{N,M}$ for $M=6,\ldots,150$, for three choices of the update
($N=6$, $r_{\rm w}=0.01$, $tol = 0.6$). }
 \label{fig:PBDW_Sgreedy_long}
\end{figure}

In Figure \ref{fig:PBDW_calIM}, we show the behavior of the Lebesgue constant $\| \mathcal{I}_M \|_{\mathcal{L}(\mathcal{X})}$ defined in \eqref{eq:estimate_new_product}, for the three choices of the update. As expected for the variational update space, the Lebesgue constant is equal to one.
We also observe that $\|  \mathcal{I}_M \|_{\mathcal{L}(\mathcal{X})}$ is significantly larger for inverse multiquadrics than for csRBF: recalling estimate \eqref{eq:estimate_noise_free_H1}, the use of inverse multiquadrics (and more in general of smooth kernels) is appropriate only for smooth fields.

\begin{figure}[h!]
\centering
\subfloat[ $H^1$] {\includegraphics[width=0.32\textwidth]
 {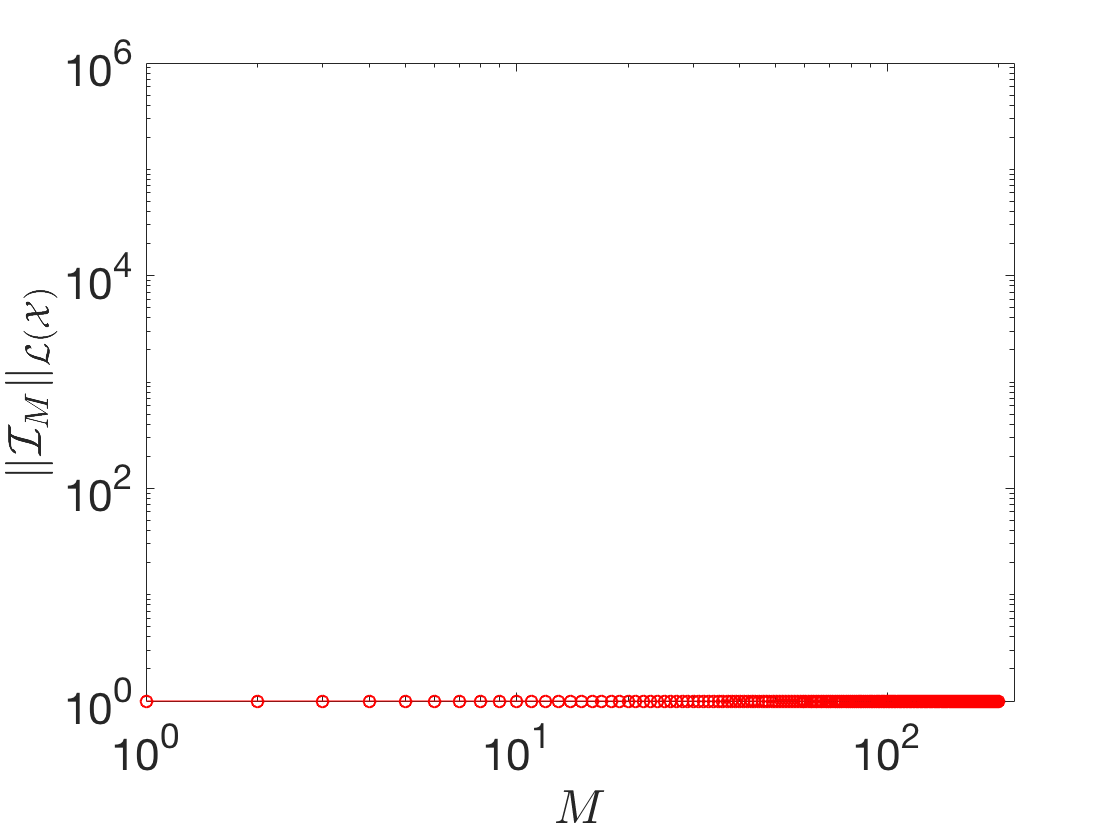}}
 ~
\subfloat[ csRBF ] {\includegraphics[width=0.32\textwidth]
 {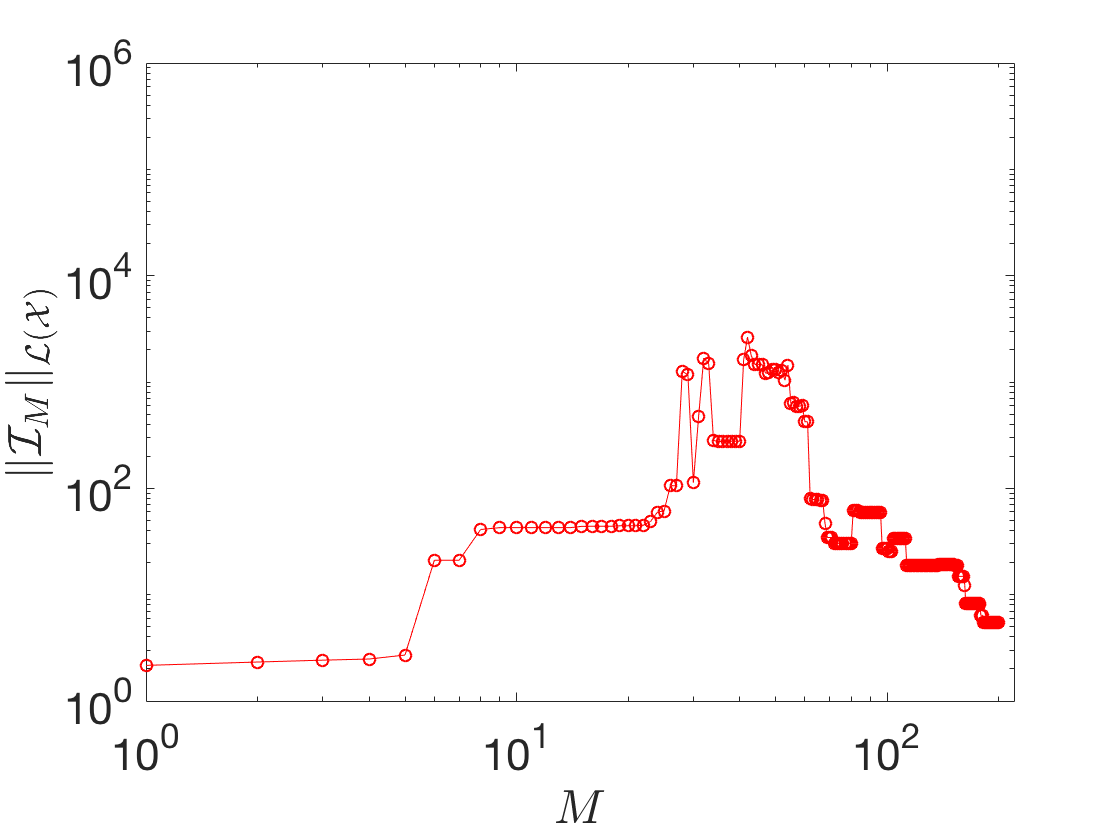}}
 ~
\subfloat[ inverse multiquadrics] {\includegraphics[width=0.32\textwidth]
 {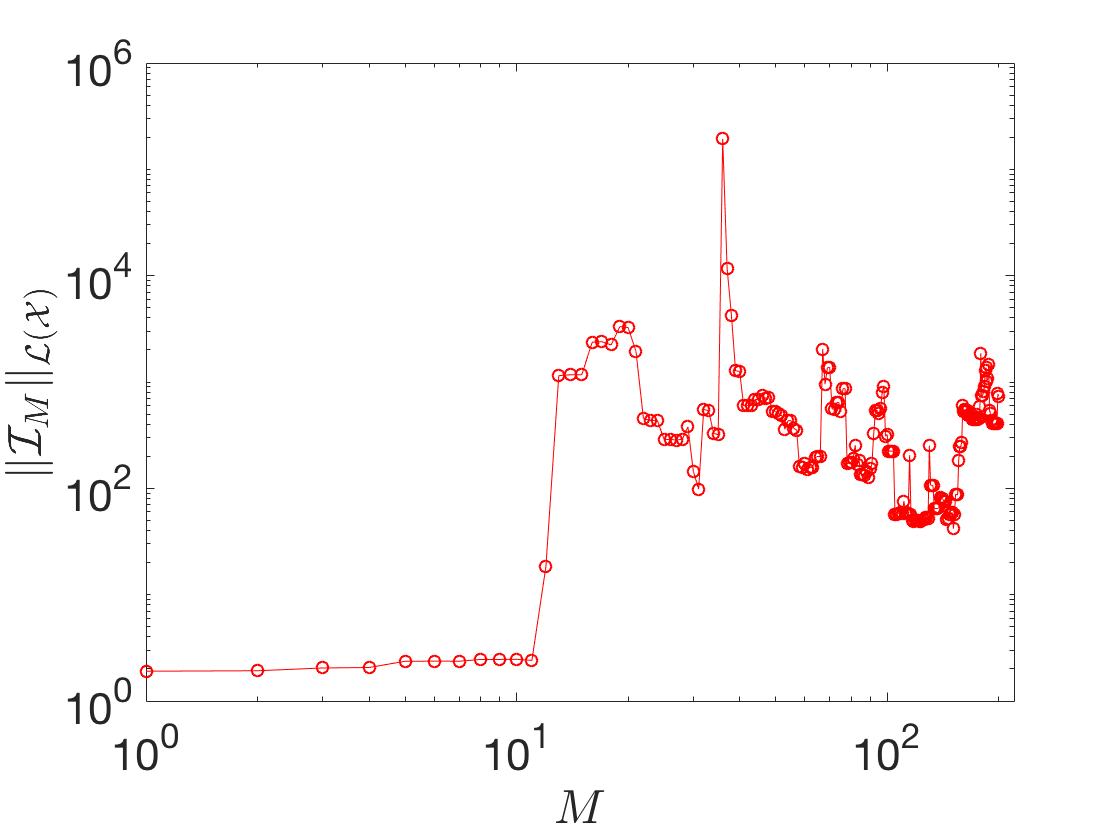}}
  
\caption{  Application to a  two-dimensional acoustic problem:
behavior of $\| \mathcal{I}_M \|_{\mathcal{L}(\mathcal{X})}$ ($N=6$, $r_{\rm w}=0.01$, $tol = 0.6$).}
 \label{fig:PBDW_calIM}
\end{figure} 

Figure \ref{fig:PBDW_M_conv} shows the behavior of the relative error $E_{\rm avg}^{\rm rel}$
\eqref{eq:E_avg_APBDW} with $M$ for perfect observations, $N=6$, and two values of $r_{\rm w}$ in \eqref{eq:exp_observations}. We observe that the user-defined update based on inverse multiquadrics leads to more accurate state estimates compared to the standard $H^1$ PBDW, particularly for $r_{\rm w}=0.01$. 
We further observe that  inverse multiquadrics outperform csRBF in all tests considered:
since the true state is smooth in $\Omega$,
the  inverse multiquadric kernel ---  which is  $C^{\infty}$ --- exhibits superior approximation properties compared to the $C^2$  csRBF kernel (see \cite[Chapter 11]{wendland2004scattered}).

\begin{figure}[h!]
\centering
\subfloat[ $r_{\rm w} = 0.01$ ] {\includegraphics[width=0.33\textwidth]
 {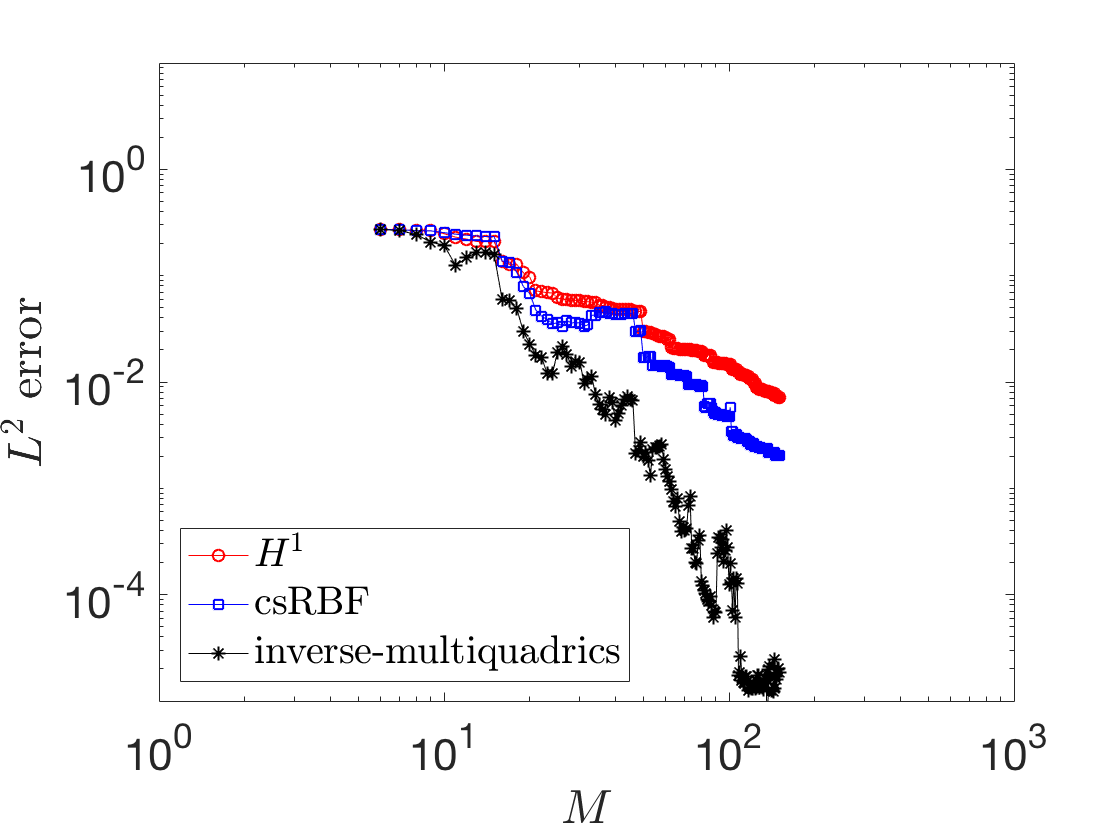}}
 ~
\subfloat[ $r_{\rm w} = 0.01$ ] {\includegraphics[width=0.33\textwidth]
 {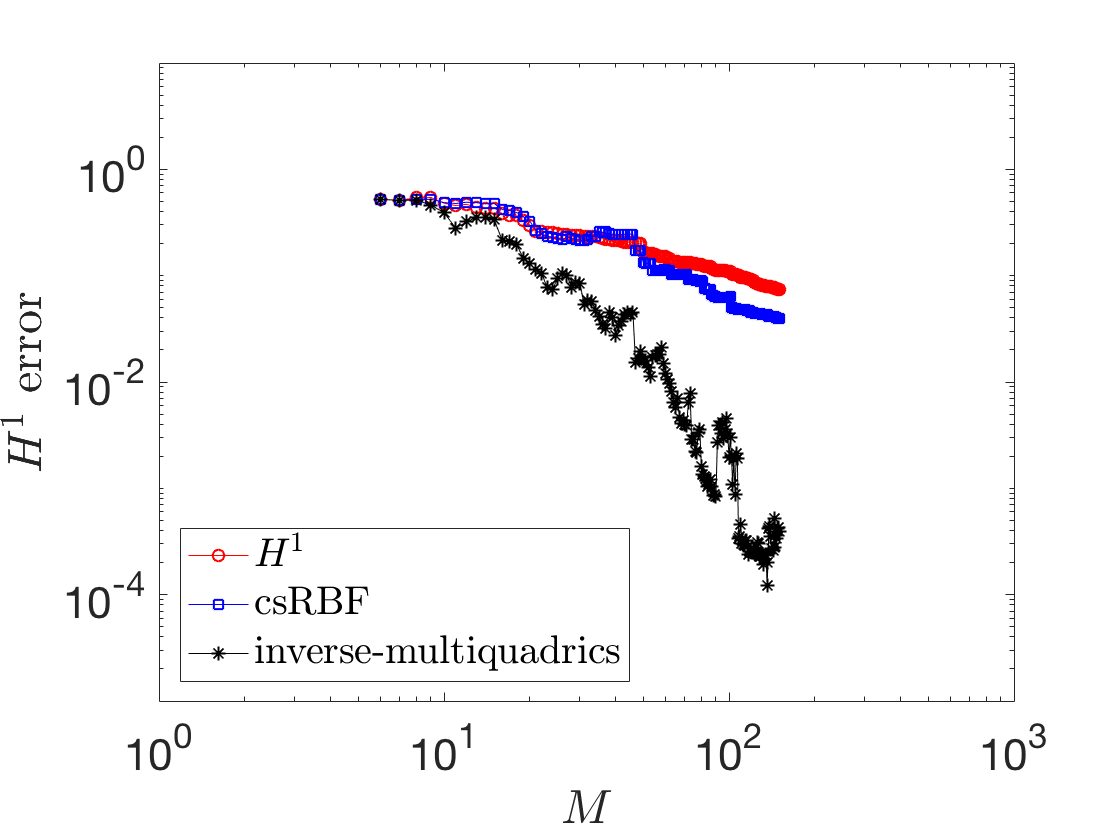}}

\subfloat[ $r_{\rm w} = 0.05$ ] {\includegraphics[width=0.33\textwidth]
 {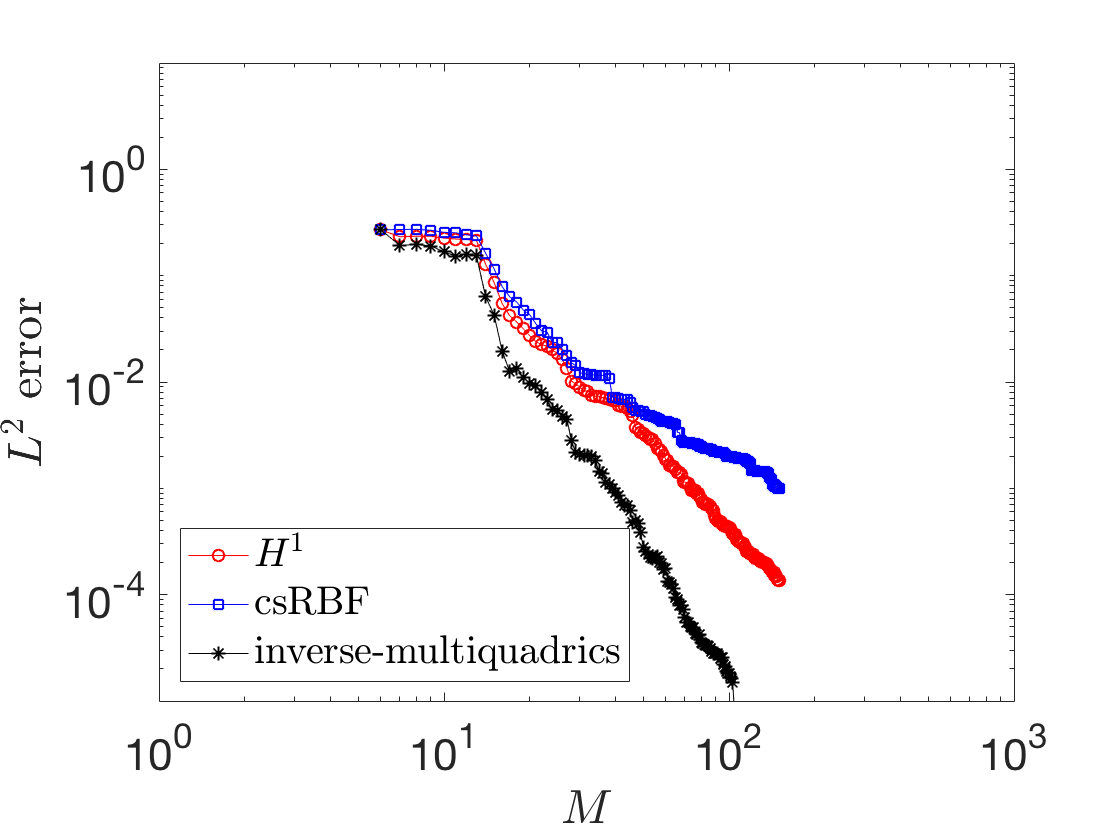}}
 ~
\subfloat[ $r_{\rm w} = 0.05$ ] {\includegraphics[width=0.33\textwidth]
 {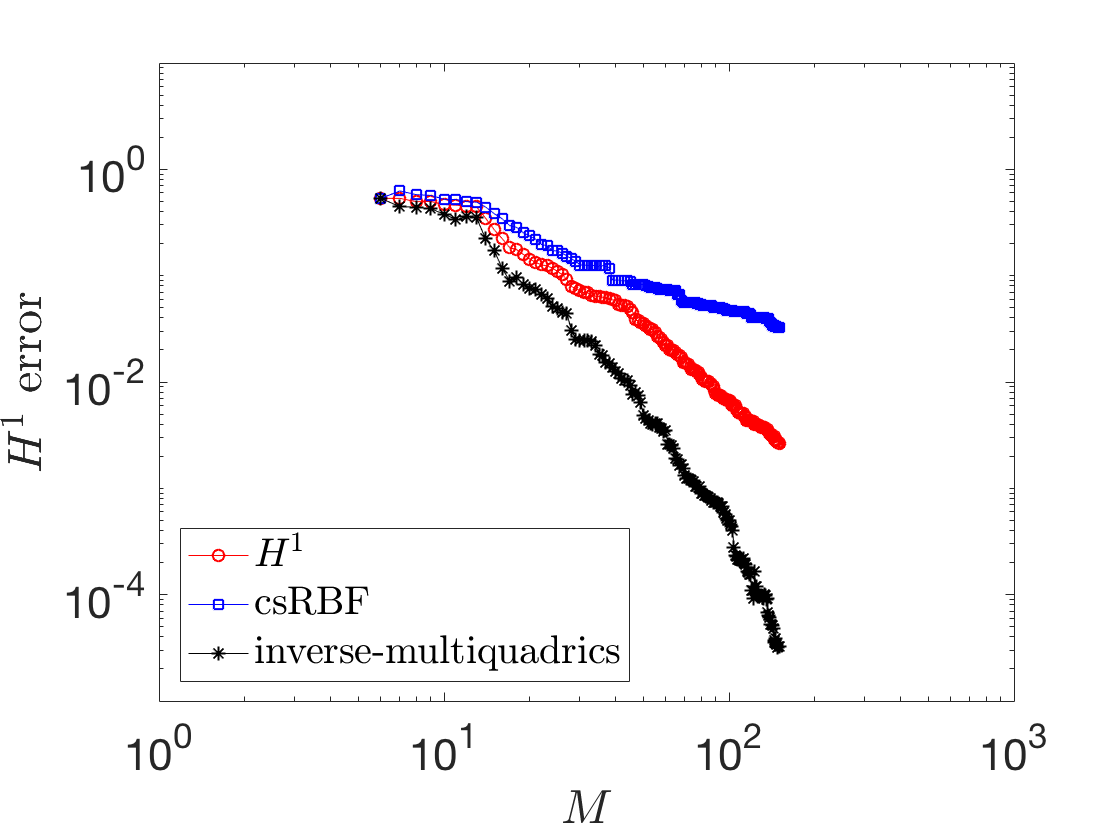}}

    \caption{  Application to a  two-dimensional acoustic problem:
   $M$ convergence for three different choices of the update space for perfect observations ($N=6$, ${\rm SNR}=0$, $tol = 0.6$).
  }
 \label{fig:PBDW_M_conv}
\end{figure} 

Figure \ref{fig:PBDW_M_conv_noisy} shows results for the behavior of the $L^2$ relative error $E_{\rm avg}^{\rm rel}$ with $M$ for imperfect observations for the user-defined (inverse multiquadrics) update and for the variational $H^1$ update.
The error $E_{\rm avg}^{\rm rel}$ is averaged over $25$ realizations of the homoscedastic random noise.
More in detail, we here consider three different noise levels ${\rm SNR}$; we set $N=4$, and we consider $r_{\rm w}=0.01$ in \eqref{eq:exp_observations}. Furthermore, in order to select the value of $\xi$, we resort to the holdout validation procedure outlined in section 
\ref{sec:choice_xi} based on additional $I=M/2$ measurements, associated with uniformly-generated points in $\Omega$.
The rate of convergence with $M$ seems to weakly depend on the choice of the update;
nevertheless, also for noisy measurements, the use of inverse multiquadrics strongly improves performance.

\begin{figure}[h!]
\centering
\subfloat[ ${\rm SNR}= 0.01$ ] {\includegraphics[width=0.32\textwidth]
 {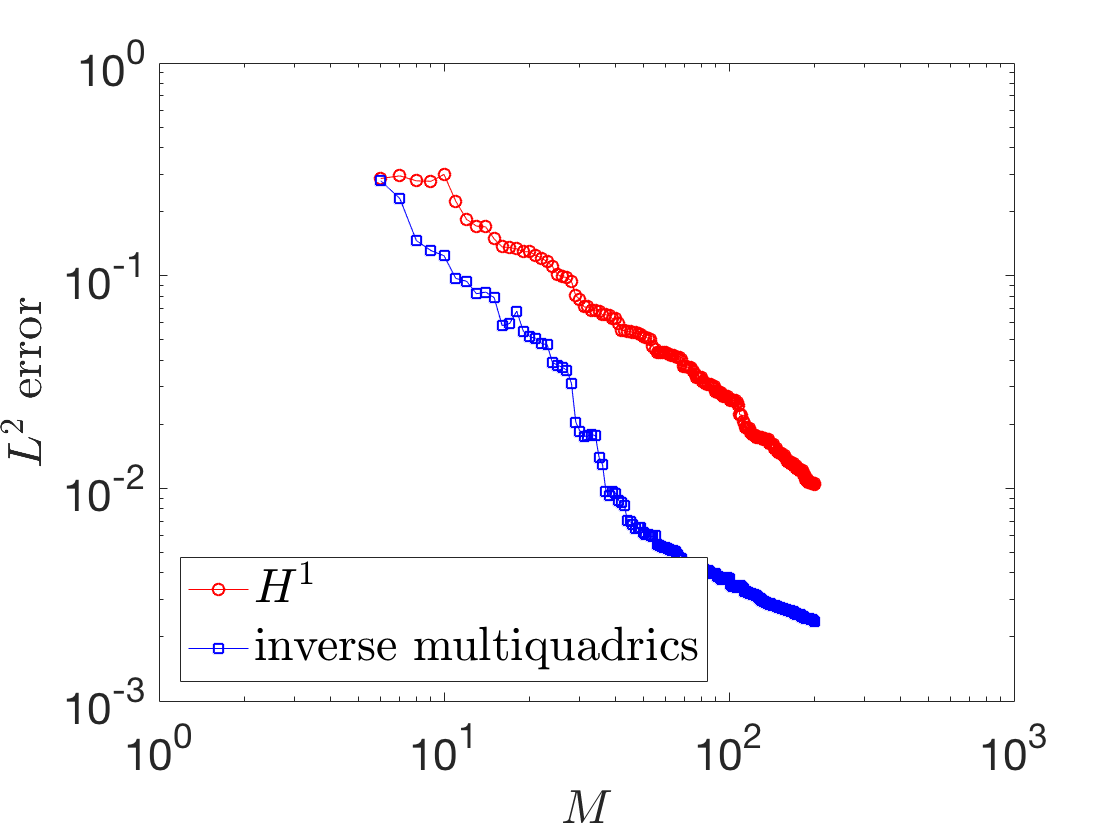}}
 ~
\subfloat[ ${\rm SNR}= 0.05$ ] {\includegraphics[width=0.32\textwidth]
 {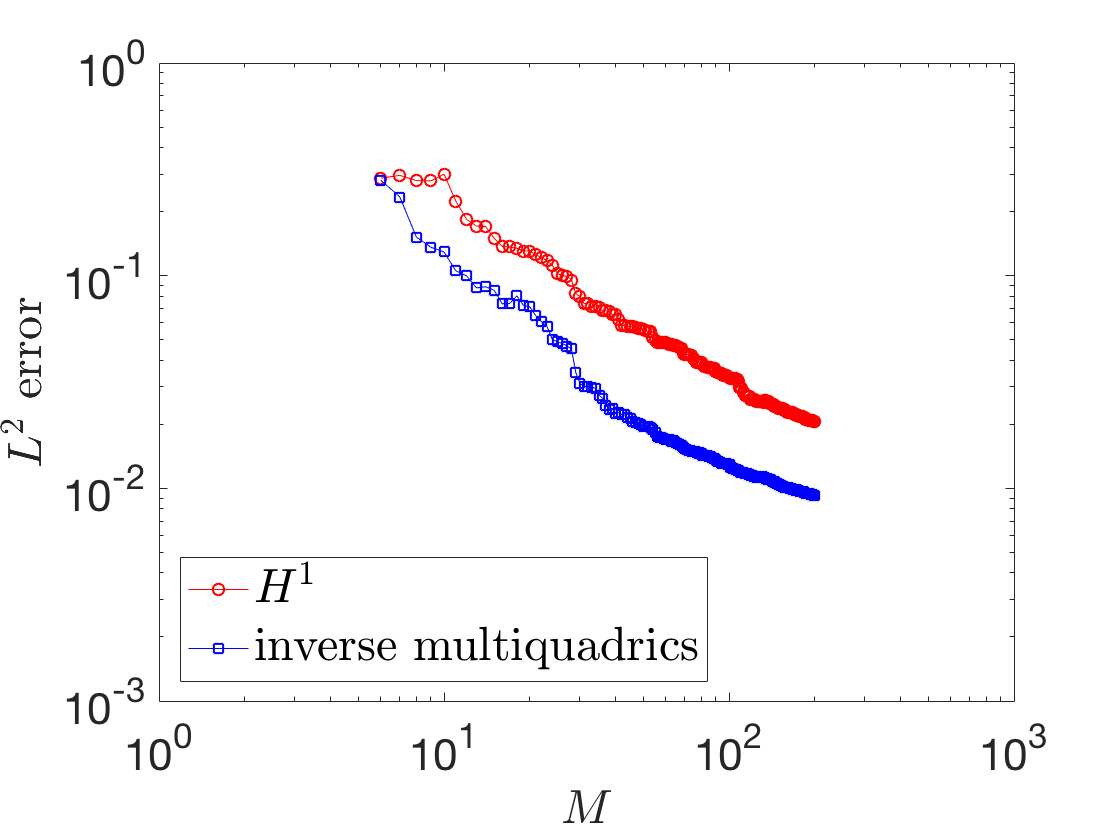}}
 ~
\subfloat[ ${\rm SNR}= 0.1$ ] {\includegraphics[width=0.32\textwidth]
 {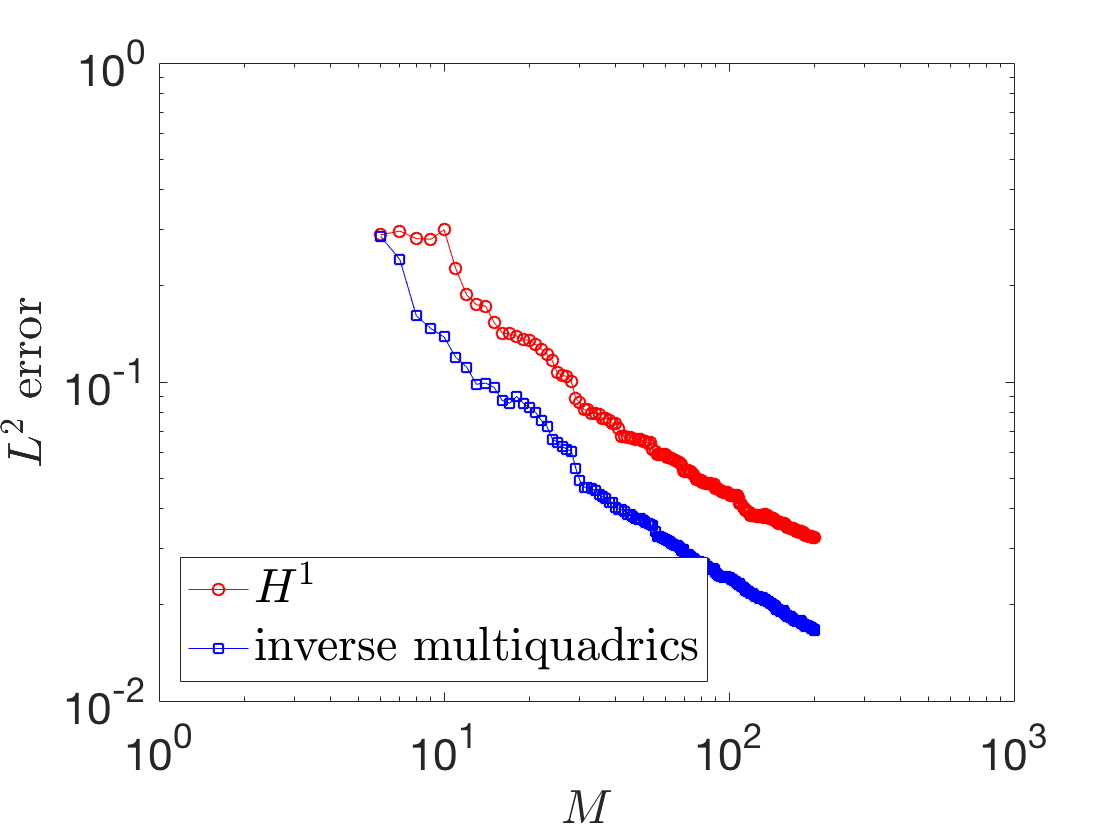}}

    \caption{  Application to a  two-dimensional acoustic problem:
   $M$ convergence for three different choices of the update space for noisy observations ($N=4$, $r_{\rm w} = 0.01$, $tol = 0.6$).
   Results are averaged over $25$  realizations of the random disturbances for each value of $M$.
  }
 \label{fig:PBDW_M_conv_noisy}
\end{figure}

In Figure \ref{fig:PBDW_CV}  we discuss the interpretation of the regularization parameter $\xi$.
Towards this end, we show the behavior of the mean squared error
$\widehat{\rm MSE}(I)$ introduced in section \ref{sec:choice_xi}
for two different choices of the true field 
($u^{\rm true} = u_{g}(\mu=5.8)$ and 
$u^{\rm true} = u_{g=0}(\mu=5.8)$), 
  two different noise levels, and
$N=5$, $M=100$, $I=50$.
Note that for $g=0$ $u^{\rm true}$ belongs to the bk manifold.
We observe that the optimal value of $\xi$ increases as the noise increases, and decreases as the best-fit error increases:
this is in good agreement with the discussion in section \ref{sec:analysis}, and also with the results in 
\cite[Figure 4]{taddei_APBDW}
for pointwise measurements.
We further observe that,
as  anticipated in  section \ref{sec:choice_xi},
the error estimator is in good qualitative agreement with the true error
$\| u^{\rm true} - u_{\xi}^{\star} \|_{L^2(\Omega)}^2$: therefore, it can be used to select the optimal value of $\xi$.

\begin{figure}[h!]
\centering
\subfloat[ ${\rm SNR}= 0.05$, $g=0$ ] {\includegraphics[width=0.32\textwidth]
 {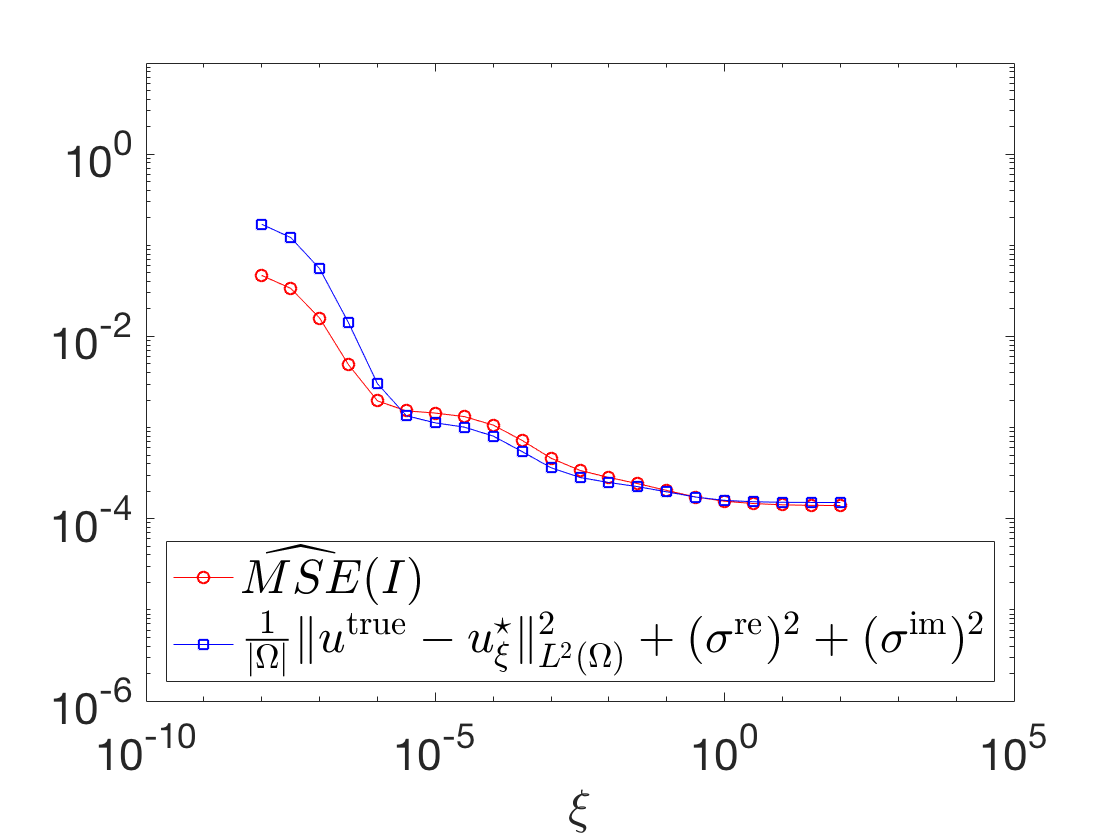}}
 ~
\subfloat[ ${\rm SNR}= 0.05$,  $g=\eqref{eq:bias_a}$ ] {\includegraphics[width=0.32\textwidth]
 {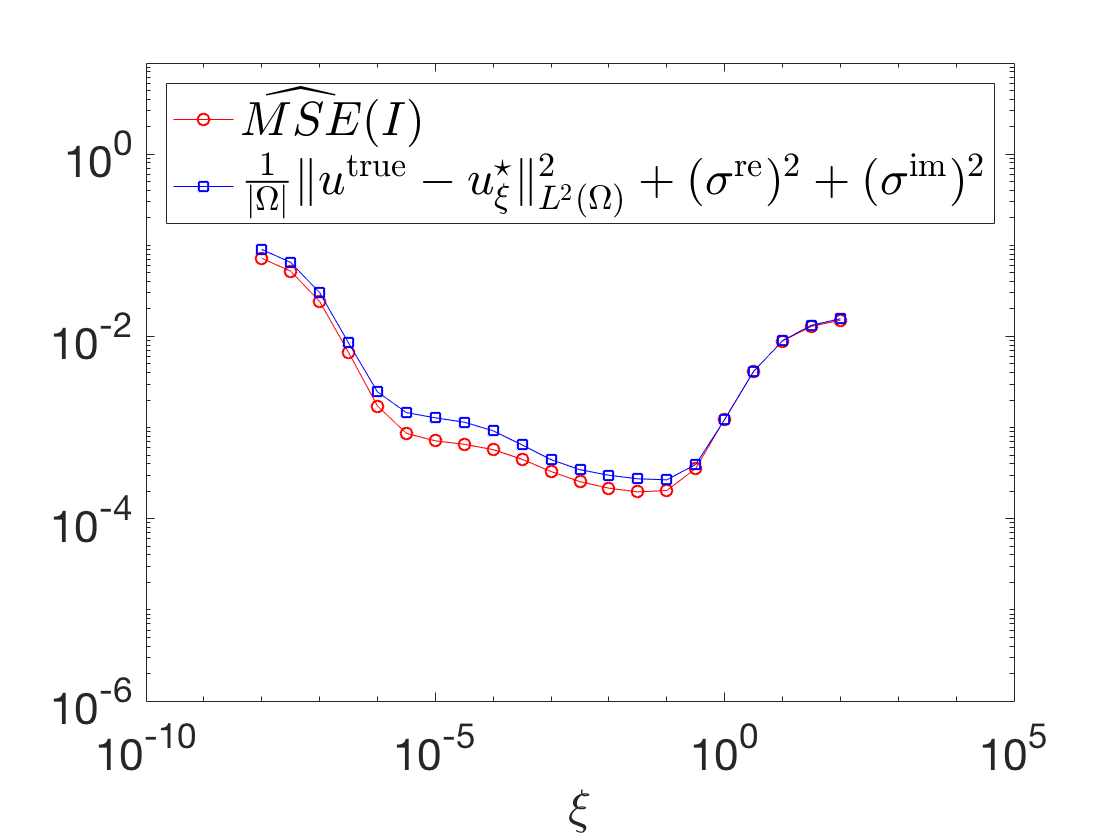}}

\subfloat[ ${\rm SNR}= 0.3$, $g=0$ ] {\includegraphics[width=0.32\textwidth]
 {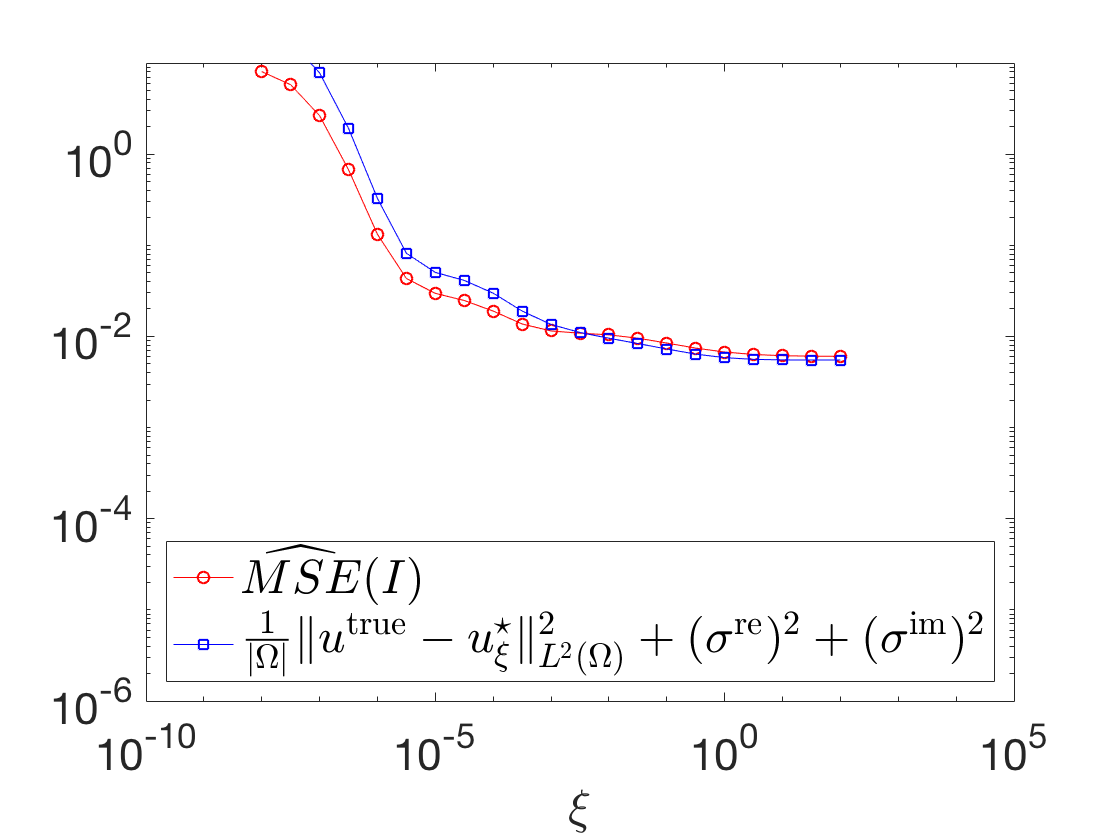}}
 ~
\subfloat[ ${\rm SNR}= 0.3$,  $g=\eqref{eq:bias_a}$ ] {\includegraphics[width=0.32\textwidth]
 {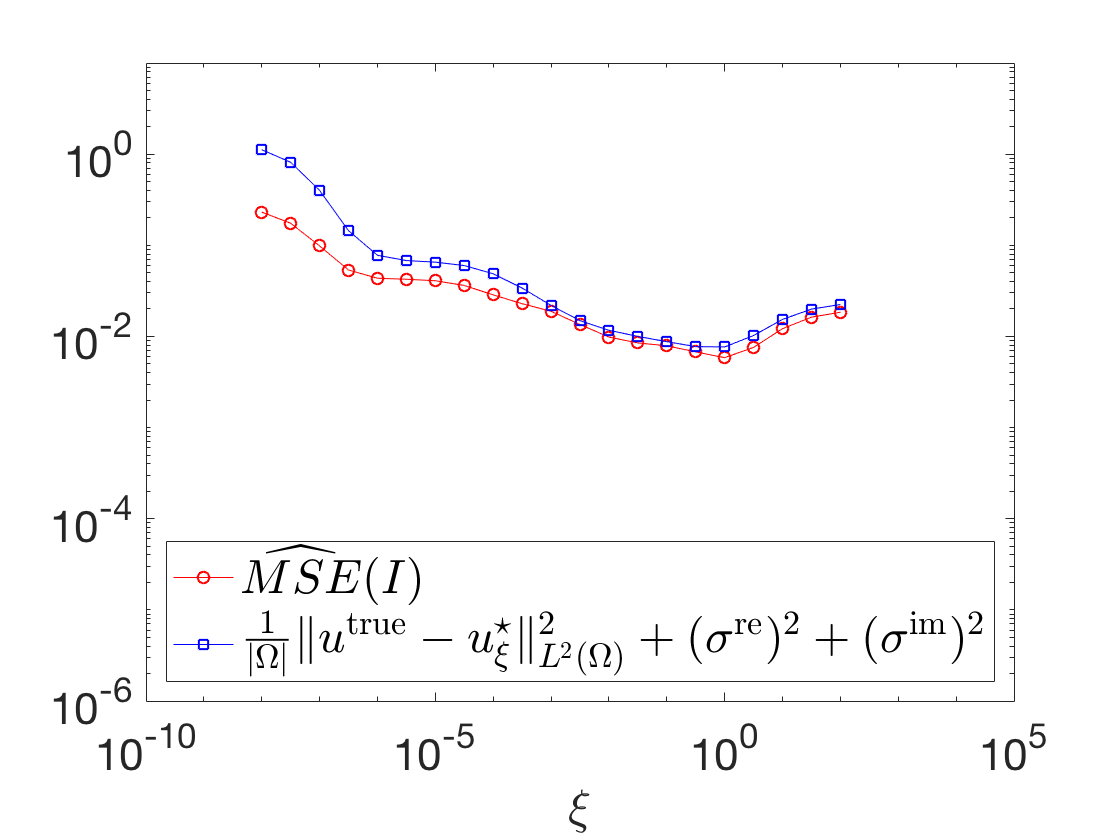}}

 \caption{Application to a  two-dimensional acoustic problem:
    interpretation of $\xi$.
    Behavior of $\widehat{\rm MSE}(I)$ and of 
    the error
$\frac{1}{|\Omega|}\| u^{\rm true} - u_{\xi}^{\star} \|_{L^2(\Omega)}^2 + (\sigma^{\rm re})^2 + (\sigma^{\rm im})^2$ for two different values of the bias $g$, and for two different values of ${\rm SNR}$.
($M=100$, $I=50$, $N=5$, $r_{\rm w}=0.01$, 
$tol=0.3$, empirical update based on inverse multiquadrics).
 }
 \label{fig:PBDW_CV}
\end{figure}

\subsection{A three-dimensional acoustic problem} 
 \subsubsection{Problem definition}
We  consider the three-dimensional  model problem:
\begin{equation}
\label{eq:acoustic_3D_exdom}
\left\{
\begin{array}{ll}
 - (1+ \epsilon i)  \Delta  u_g(\mu)
\,   - (2\pi \mu)^2  u_g (\mu) = g  &  \mbox{in} \; \Omega; \\[3mm]
\partial_n  u_g (\mu)
= 0  &  \mbox{on} \;  \partial   \Omega; \\
\end{array}
\right.
\end{equation}
where $\epsilon=10^{-2}$,   $\Omega  = (-1.5,1.5) \times (0,3) \times (0,3) \setminus  \Omega^{\rm cut}$,
$\Omega^{\rm cut} = (-0.5,0.5) \times (0.25,0.5) \times (0,1)$. 
Figure \ref{fig:acoustic_3D_extracted_dom} shows the geometry. 
In this example, we consider the bk manifold $\mathcal{M}^{\rm bk} = 
\{ u^{\rm bk}(\mu)
=u_{g^{\rm bk}}(\mu): \mu \in \mathcal{P}^{\rm bk} = [0.1,0.5] \}$, and we define the true field as the solution
to \eqref{eq:acoustic_3D_exdom} for some 
$\mu^{\rm true} \in \mathcal{P}^{\rm bk}$
and $g=g^{\rm true}$, where 
$$
g^{\rm bk}(x) = 10 \, e^{ - \| x - p^{\rm bk} \|_2^2};
\quad
g^{\rm true}(x) = 10 \, e^{ - \| x - p^{\rm true} \|_2^2};
$$
and $p^{\rm bk} = [0,2,1]$, $p^{\rm true} = [-0.1,2,1]$.
Parameterized uncertainty in the system models uncertainty in the input frequency $\mu$;
non-parametric or unanticipated uncertainty is here associated with the incorrect location of the acoustic source ($p^{\rm bk} \neq p^{\rm true}$).
Computations are based on a P2 FE discretization with roughly $\mathcal{N} = 16000$ degrees of freedom in $\Omega$. 
Figure \ref{fig:vis3D} shows the bk and true solutions  for two values of $\mu$.

\begin{figure}[h]
\centering
\subfloat[   ] {\includegraphics[width=0.33\textwidth]
 {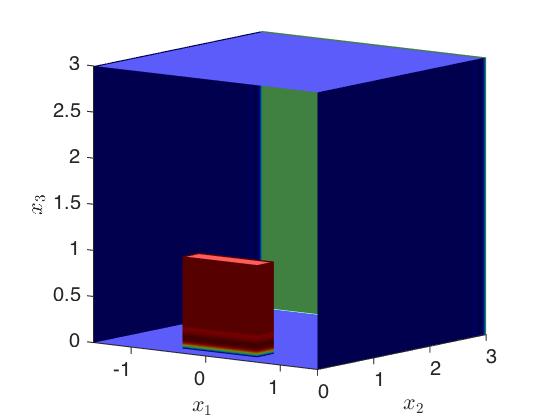}}
~~~~~
\subfloat[] {
\begin{tikzpicture}[x=1.0cm,y=1.0cm]
\linethickness{0.3 mm}
\linethickness{0.3 mm}
\draw  (0,0)--(0.25,0)--(0.25,1)--(0.5,1)--(0.5,0)-- (3,0)-- (3,3)-- (0,3)-- (0, 0);

\coordinate [label={center:  {\large {$\Omega$} }}] (E) at (1.3,2.3) ;
\coordinate [label={center:  {\large {$\Omega^{\rm cut}$} }}] (E) at (1.1,0.8) ;
\coordinate [label={center:  {\large {$x_2$}}}] (E) at (1.4, -0.3) ;
\coordinate [label={center:  {\large {$x_3$}}}] (E) at (3.3, 1.5) ;
\end{tikzpicture}
}
~~~~~
\subfloat[] {
\begin{tikzpicture}[x=1.0cm,y=1.0cm]
\linethickness{0.3 mm}
\linethickness{0.3 mm}
\draw  (-1.5,0)--(1.5,0)--(1.5,3)--(-1.5,3)--(-1.5,0);
\draw[densely dashed]     (-0.5,0)-- (-0.5,1)-- (0.5,1)-- (0.5,0);

\coordinate [label={center:  {\large {$\Omega^{\rm cut}$} }}] (E) at (0,0.4) ;
\coordinate [label={center:  {\large {$\Omega$}}}] (E) at (0, 2.3) ;

\coordinate [label={center:  {\large {$x_1$}}}] (E) at (0, -0.3) ;
\coordinate [label={center:  {\large {$x_3$}}}] (E) at (1.8, 1.5) ;
\end{tikzpicture}
}
\caption{ Application to a three-dimensional acoustic problem: computational domain. }
\label{fig:acoustic_3D_extracted_dom}
\end{figure}

\begin{figure}[h!]
\centering
\subfloat[ ${\rm Re}(u^{\rm bk})$  $\mu=0.1$ ] {\includegraphics[width=0.33\textwidth]
 {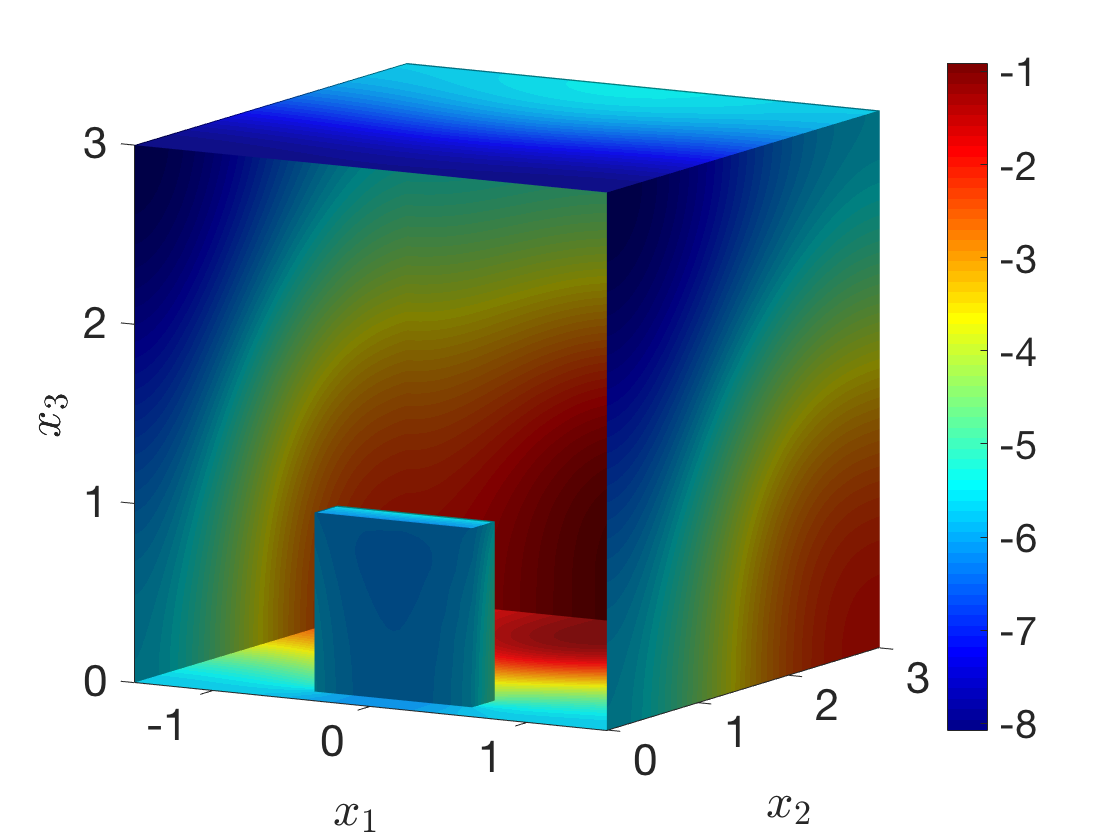}}
 ~
\subfloat[ ${\rm Re}(u^{\rm bk})$  $\mu=0.5$ ] {\includegraphics[width=0.33\textwidth]
 {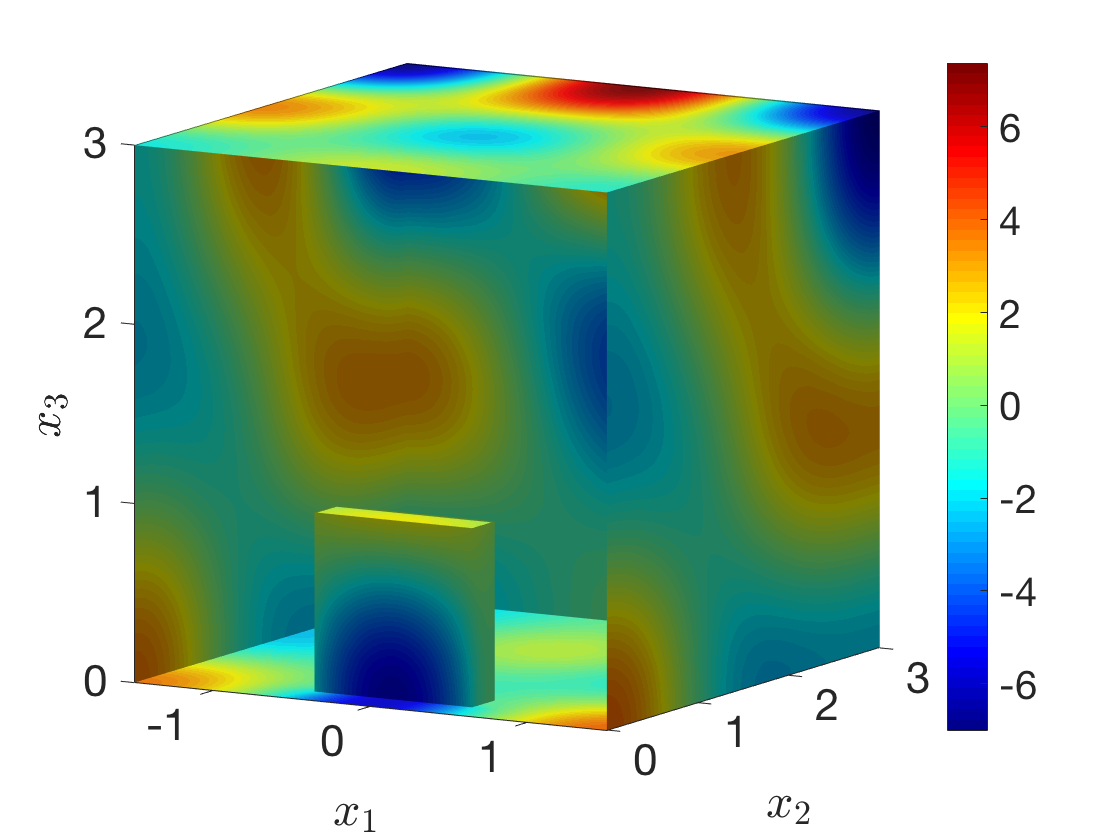}}
 
\subfloat[ ${\rm Re}(u^{\rm true})$  $\mu=0.1$ ] {\includegraphics[width=0.33\textwidth]
 {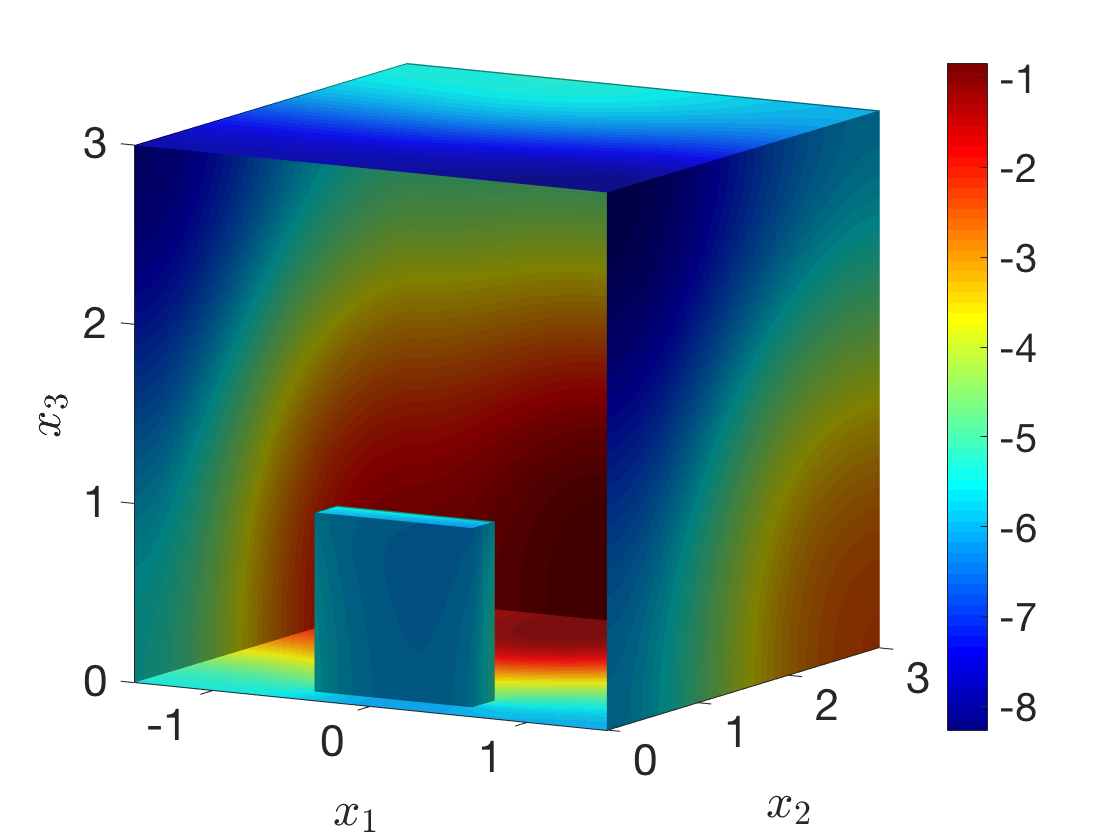}}
 ~
\subfloat[ ${\rm Re}(u^{\rm true})$  $\mu=0.5$ ] {\includegraphics[width=0.33\textwidth]
 {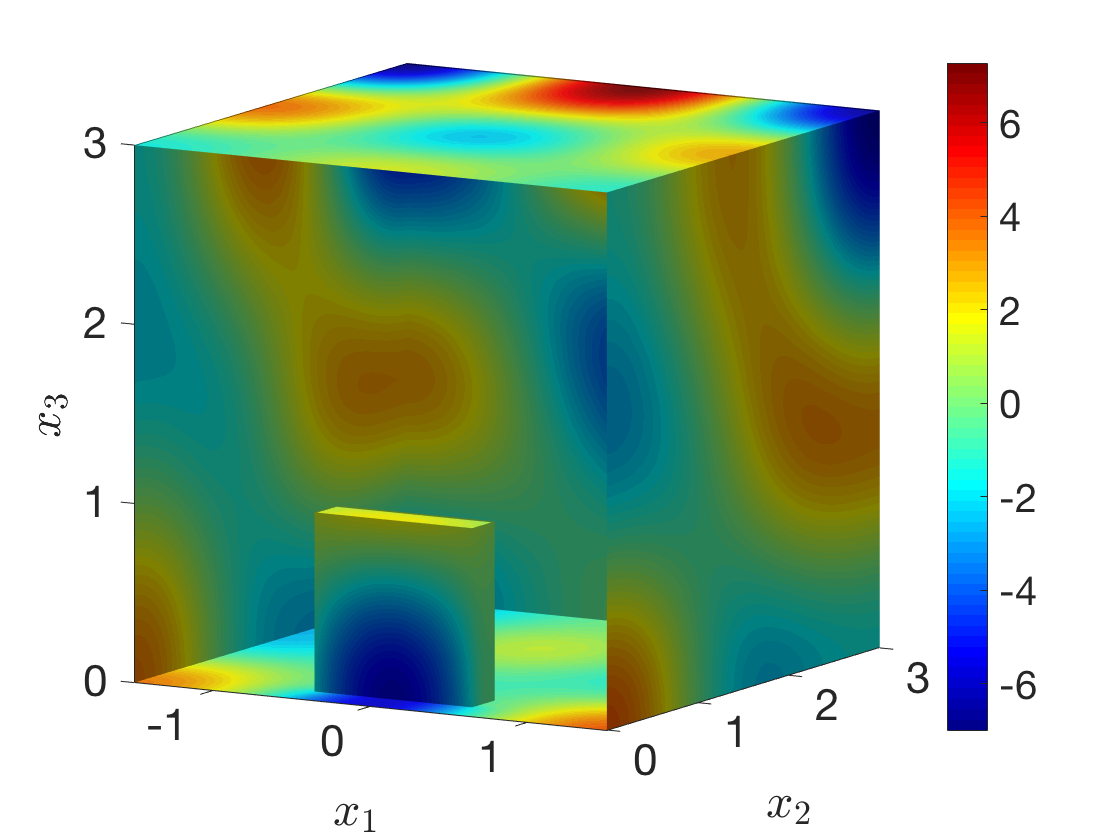}}

    \caption{Application to a  three-dimensional acoustic problem:  visualization of bk and true fields.
  }
 \label{fig:vis3D}
\end{figure}

As in the previous example, we model the synthetic observations by a Gaussian convolution with standard deviation $r_{\rm w}$, see \eqref{eq:exp_observations}.
For simplicity, in the tests below, we only consider perfect measurements.
Furthermore, we measure performance by computing the relative
$L^2$  and $H^1$ errors  $E_{\rm avg}^{\rm rel}$
\eqref{eq:E_avg_APBDW} for $n_{\rm test}=10$  different choices of the parameter $\mu$ in $\mathcal{P}^{\rm bk}$.

\subsubsection{PBDW spaces}

We consider the ambient space $\mathcal{X}=H^1(\Omega)$ endowed with the inner product $(\cdot, \cdot)$ \eqref{eq:inner_product}. 
Furthermore, the background space $\mathcal{Z}_N$ is built using the Weak-Greedy algorithm based on the residual.

As regards the update space, we consider the variational  update associated with the inner product \eqref{eq:inner_product}, and the user-defined updates 
$\mathcal{U}_M = {\rm span} \{  \phi (   \lambda \|  \cdot - x_m^{\rm obs} \|_2   )   \}_{m=1}^M$ for $ \phi(r) = \frac{1}{1+r^2}$ (inverse multiquadrics) and $\lambda=1$.
The observation centers $\{  x_m^{\rm obs} \}_{m=1}^M$ are chosen according to  Algorithm \ref{SGreedy_plus}.

\subsubsection{Numerical results}

In Figure \ref{fig:Sgreedy_3d}, we perform the same test of
Figure \ref{fig:PBDW_Sgreedy}.
We compare the behavior of the inf-sup constant $\beta_{N=30,M}$ with $M$ using the SGreedy procedure, and using randomly-generated centers. For the second choice, we average over $35$ different random choices of the $M$ centers. 
As in the previous example, the Greedy procedure   improves the stability of the PBDW formulation.

\begin{figure}[h!]
\centering
\subfloat[ $H^1$] {\includegraphics[width=0.33\textwidth]
 {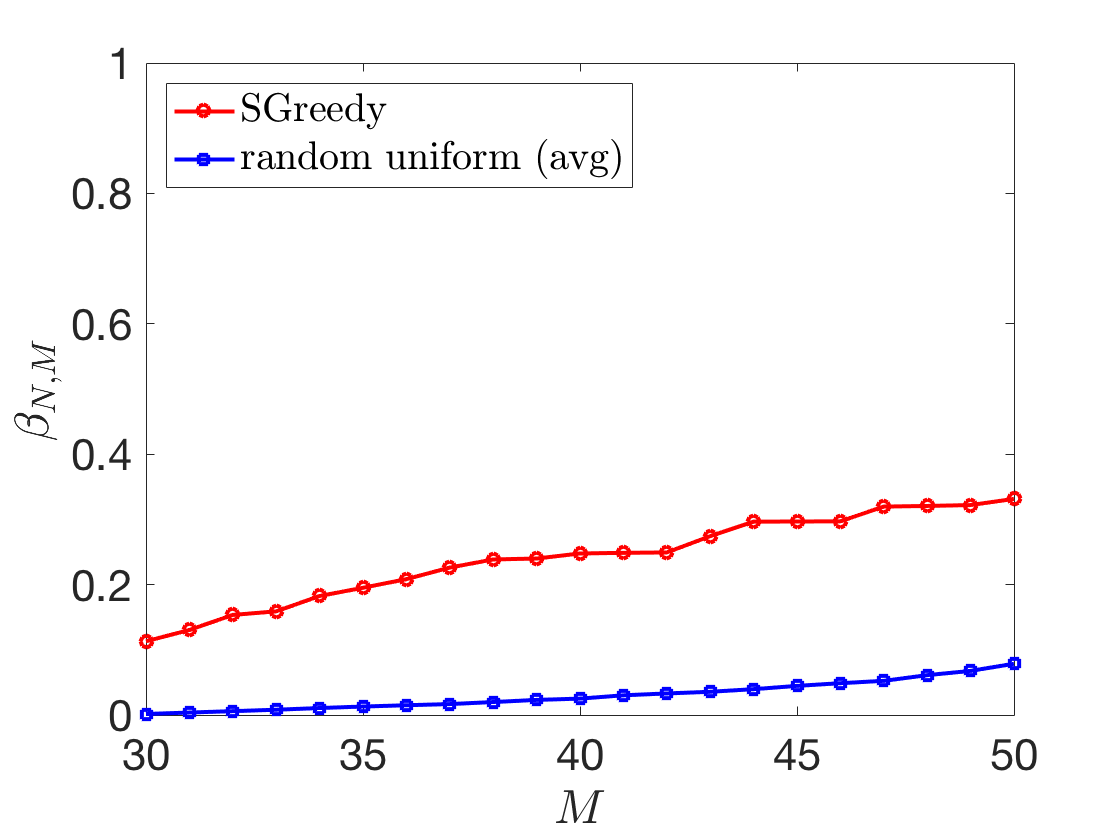}}
 ~
\subfloat[ inverse multiquadrics] {\includegraphics[width=0.33\textwidth]
 {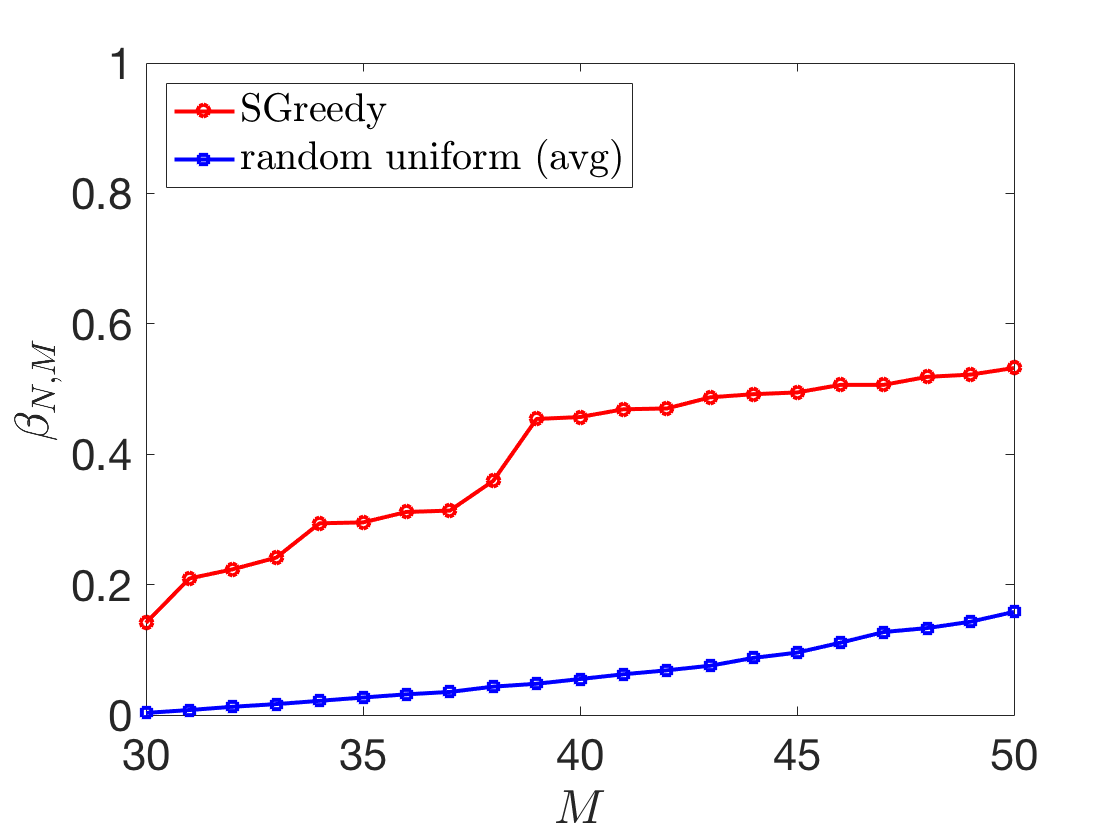}}
  
\caption{  Application to a  three-dimensional acoustic problem:
application of the SGreedy algorithm ($N=30$, $r_{\rm w}=0.02$, $tol=0.4$). 
Results for random centers are averaged over $35$ random trials.}
 \label{fig:Sgreedy_3d}
\end{figure} 

Figure \ref{fig:PBDW_M_conv} shows the behavior of the relative error $E_{\rm avg}^{\rm rel}$
\eqref{eq:E_avg_APBDW} with $M$ for perfect observations, $N=30$, and $r_{\rm w}=0.02$ in \eqref{eq:exp_observations}. As in the previous case, the user-defined update based on inverse multiquadrics leads to more accurate state estimates compared to the standard $H^1$ PBDW. 

\begin{figure}[h!]
\centering
\subfloat[ ] {\includegraphics[width=0.33\textwidth]
 {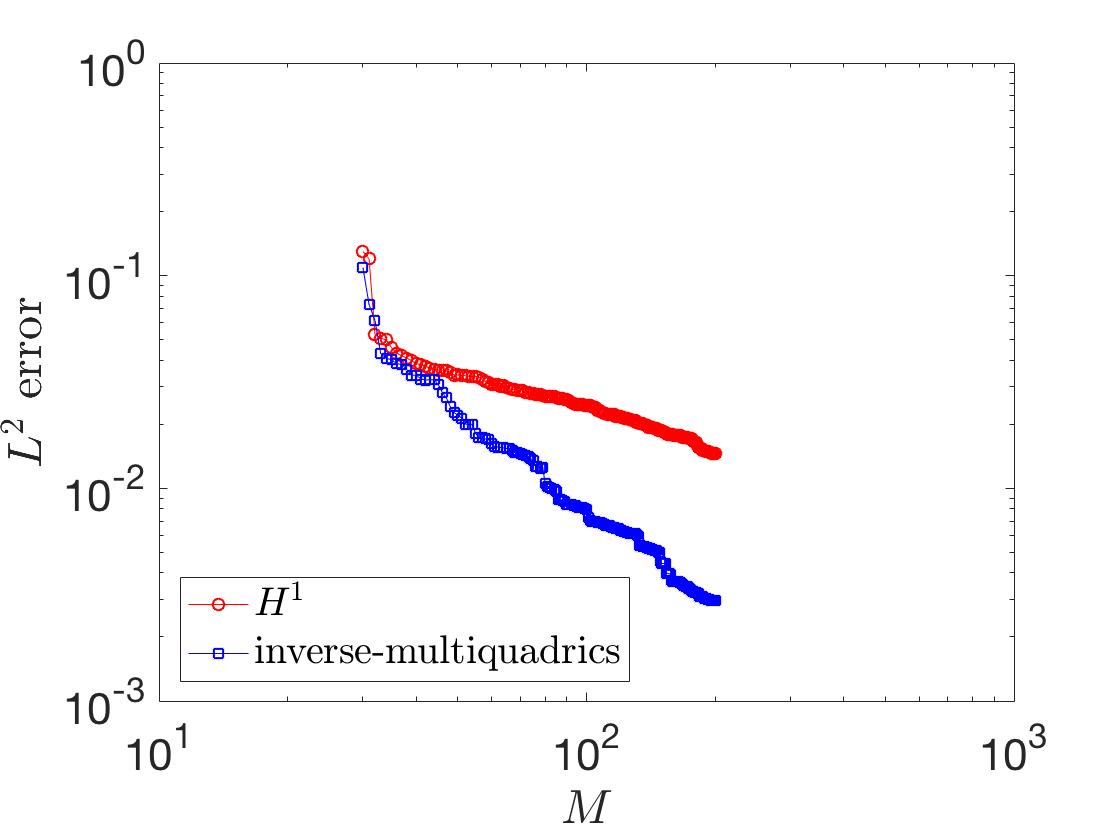}}
 ~
\subfloat[] {\includegraphics[width=0.33\textwidth]
 {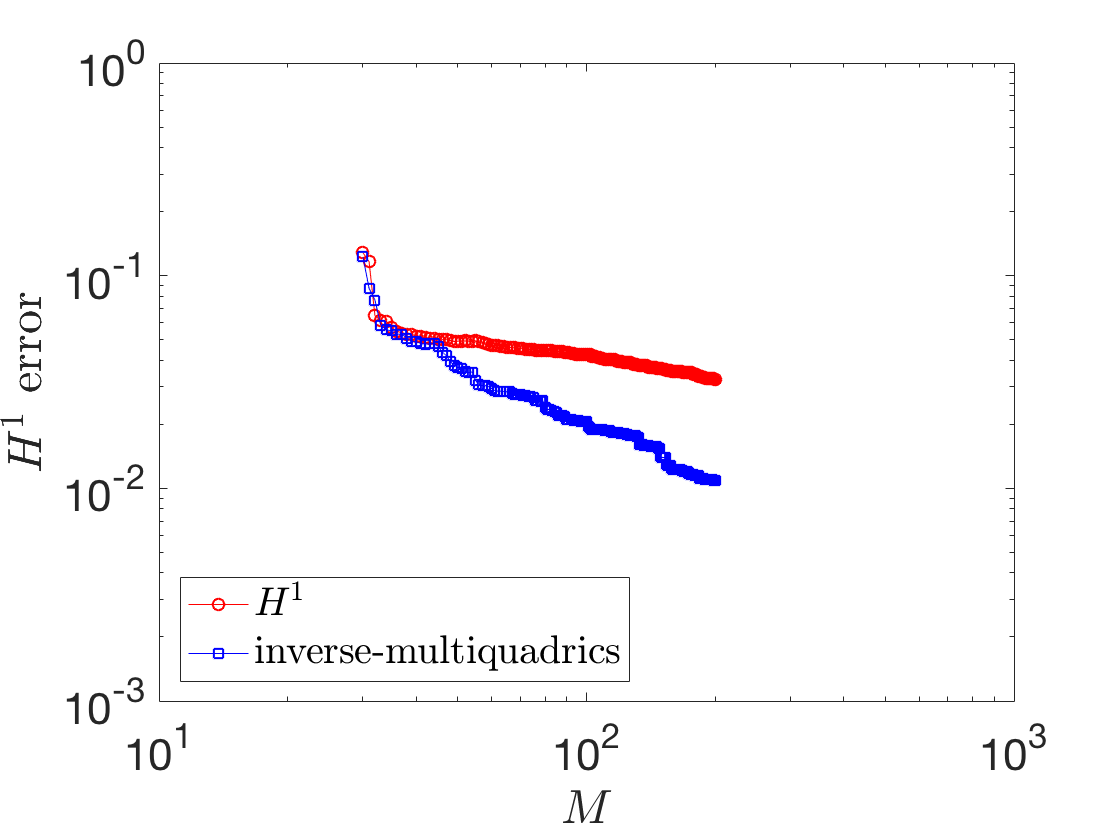}}
  
\caption{  Application to a  three-dimensional acoustic problem:
$M$ convergence fo two different choices of the update space for perfect observations ($N=30$, $r_{\rm w}=0.02$, $tol=0.4$).
}
 \label{fig:Mconv_3d}
\end{figure}

 \section{Application to Fluid Mechanics}
\label{sec:fluids} 
  \subsection{Problem statement}

We consider the following model problem\footnote{
The configuration is similar to the one considered in \cite[section 5]{d2012variational};
however, while here we consider a 2D domain, in \cite{d2012variational} the authors consider a more realistic axi-symmetric model.
}:
\begin{equation}
\label{eq:NS_model_problem}
\left\{
\begin{array}{ll}
\displaystyle{ 
-\,
\frac{1}{\rm Re}
\,
 \nabla \cdot \left(
\nabla u_g + 
\nabla u_g^T
\right)
\,
+
 (u_g \cdot \nabla) u_g + \nabla p_g = 0 } 
 & {\rm in} \, \Omega \\[4mm]
\displaystyle{\nabla \cdot u_g = 0} & {\rm in} \, \Omega \\[3mm]
p_g \mathbf{n} - \frac{1}{\rm Re} (\nabla u_g + \nabla u_g ^T) \mathbf{n}  = 0
& {\rm on} \, \Gamma_{\rm out,1} \cup  \Gamma_{\rm out,2}   \\[3mm]
u_g = g \mathbf{e}_1  & {\rm on} \, \Gamma_{\rm in}   \\[3mm]
u_g = 0    & {\rm on} \, \partial 
\Gamma_{\rm hom}=
\Omega \setminus \left(   \Gamma_{\rm in}  \cup  \Gamma_{\rm out} \right)  \\[3mm]
\end{array}
\right.
\end{equation}
where the domain $\Omega$ is depicted in Figure \ref{fig:domain}(a).
 We then define the bk and true manifolds  as follows:
  $$
  \mathcal{M}^{\rm bk}
  =
  \{
  u_g({\rm Re}):
  {\rm Re} \in [50, 350],
  \; \;
  g(x_2) = 4 (1-x_2) \, x_2
  \};
  $$
 and 
   $$
  \mathcal{M}^{\rm true}
  =
  \{
  u_g({\rm Re}):
  {\rm Re} \in [50, 350],
  \; \;
  g(x_2) = 4 (1-x_2) x_2
  \left(   
  1 + 0.1 \sin(2\pi x_2)
  \right) 
  \}.
  $$
Here,   uncertainty in $\mu = {\rm Re}$ constitutes the anticipated (parametric) uncertainty in the system, while 
  uncertainty in the inflow condition is the unanticipated (non-parametric) uncertainty.
  
  As in the previous examples, we model synthetic observations by  a Gaussian convolution with standard deviation $r_{\rm w}=0.01$. Furthermore, we resort to a conforming Taylor-Hood P3-P2 Finite Element discretization with 
  $\mathcal{N}_{\rm u}=24038$ degrees of freedom.

 \begin{figure}[h!]
 \centering
\subfloat[ ] {\includegraphics[width=0.33\textwidth]
 {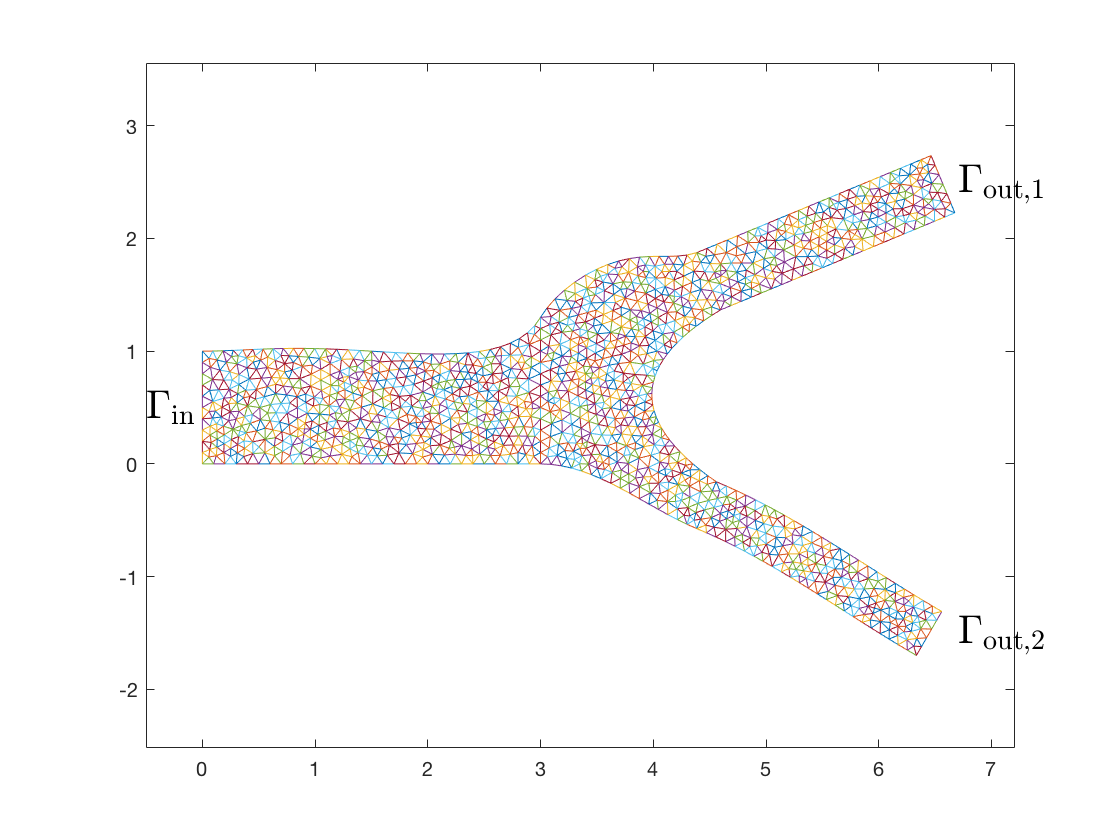}}
 ~
\subfloat[ ] {\includegraphics[width=0.33\textwidth]
 {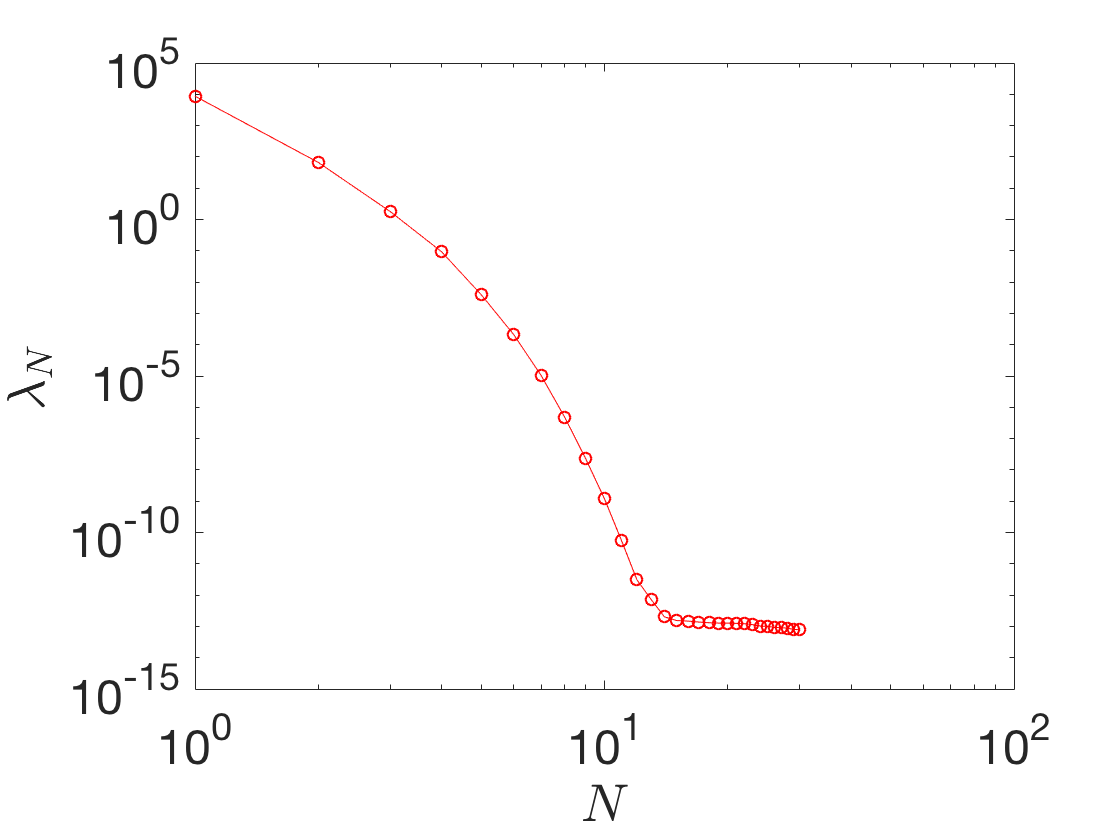}}  

\caption{Application to Fluid Dynamics.  
Figure (a): computational domain;
Figure (b): behavior of the POD eigenvalues ($n_{\rm train}=100$).
}
 \label{fig:domain}
\end{figure}

\subsection{PBDW spaces}

We consider the ambient space $\mathcal{X} = \{ v \in [H_{0,\Gamma_{\rm hom}}(\Omega)]^2: \, \nabla \cdot v =  0  \}$ endowed with the inner product:
$$
(u, v)
= \int_{\Omega} \, \nabla u : \nabla v \, + \, u \cdot v \, dx.
$$
The background space is built using Proper Orthogonal Decomposition based on the $(\cdot, \cdot)$ inner product
(see, e.g., \cite{volkwein2011model}).
Figure \ref{fig:domain}(b) shows the behavior of the POD eigenvalues.
On the other hand, we consider the update generators:
\begin{equation}
\label{eq:update_generator_fluids}
\phi_i(\cdot, x)
=
R_{\mathcal{X}} \, \ell_i(\cdot, x, R_{\rm w}),
\quad
\ell_i(u,x,R_{\rm w})
=
\ell(u_i, x, R_{\rm w}),
\qquad
i=1,2,
\end{equation}
for several values of $R_{\rm w} \geq r_{\rm w}$.
We observe that the update space is divergence-free; furthermore, it satisfies the no-slip boundary conditions on $\Gamma_{\rm hom}$.

\subsection{Numerical results}

In Figure \ref{fig:SGreedy_fluids}, we show the 
performance of the SGreedy procedure for $N=5$ and $R_{\rm w}=0.05$ ($tol=0.3$).
Figure \ref{fig:SGreedy_fluids}(a) shows the behavior of the inf-sup constant $\beta_{N,M}$ with $M$, 
for the user-defined update with $R_{\rm w}=0.05$:
we observe that the SGreedy procedure outperforms on average the random uniform selector.
Figure \ref{fig:SGreedy_fluids}(b)  shows the location of the points selected by the SGreedy (stabilization stage) Algorithm. We observe that  SGreedy selects   points in the boundary layer that develops near the junction:
this can be explained by recalling that the Reynolds number --- the parameter associated with the solution manifold ---
deeply affects the thickness of the boundary layer.

\begin{figure}[h!]
\centering
\subfloat[] {\includegraphics[width=0.33\textwidth]
 {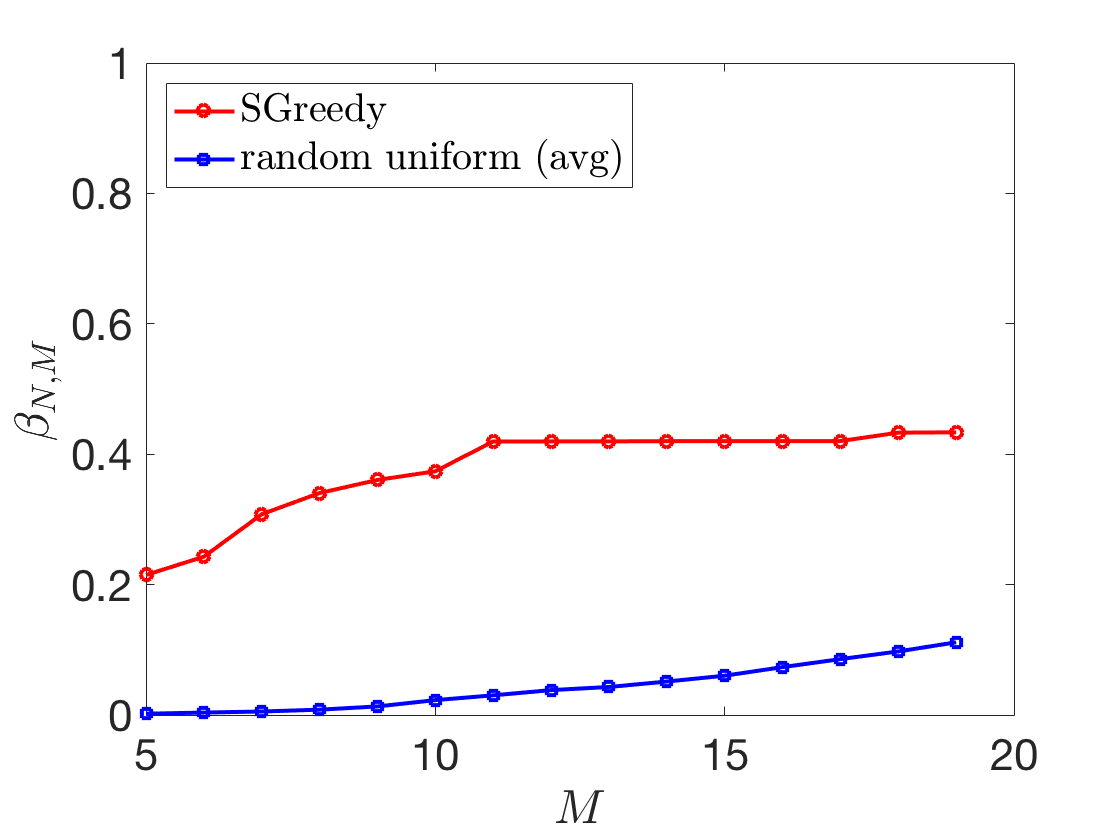}}
 ~
\subfloat[ ] {\includegraphics[width=0.33\textwidth]
 {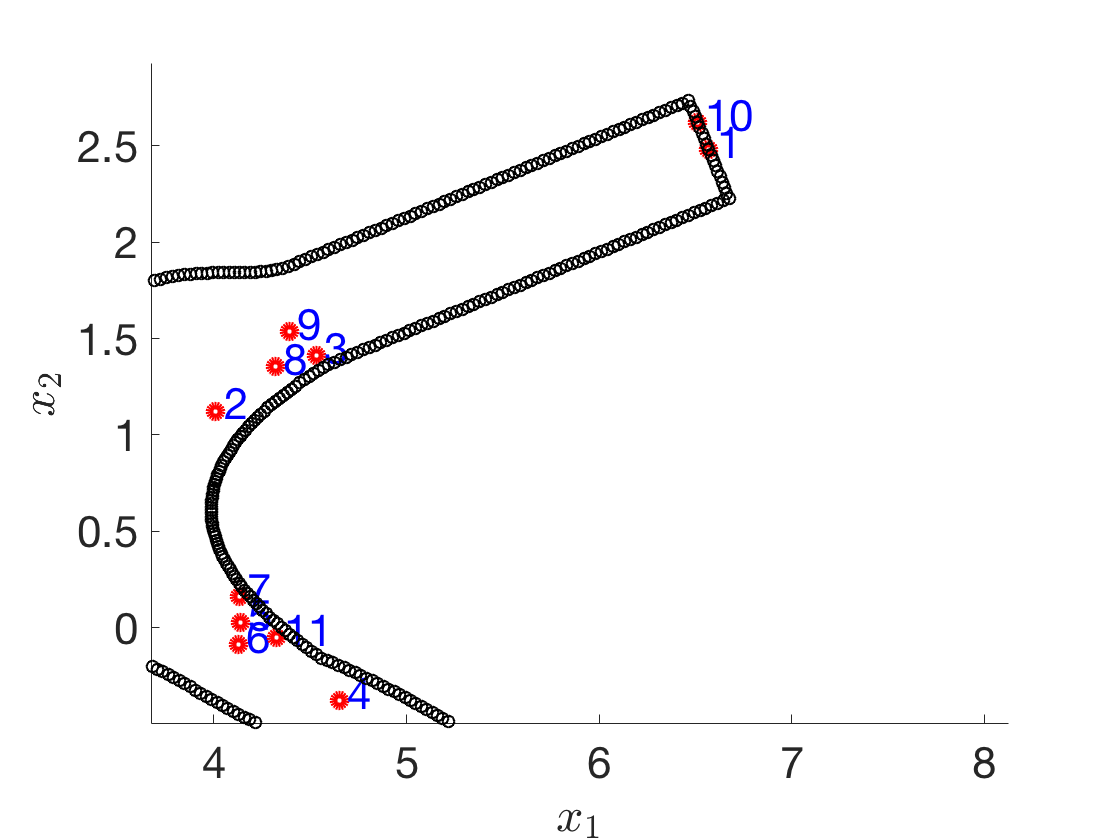}}
  
\caption{  application of the SGreedy + approx algorithm.
Figure (a): behavior of the inf-sup constant $\beta_{N,M}$ with $M$, 
for the user-defined update with $R_{\rm w}=0.05$;
Figure (b): centers $\{ x_m^{\rm obs} \}_m$ selected by the SGreedy algorithm
($N=5$, $r_{\rm w}=0.01$, $tol=0.3$). }
 \label{fig:SGreedy_fluids}
\end{figure} 

Figure \ref{fig:Mconv} shows the behavior of the relative error $E_{\rm avg}^{\rm rel}$ 
\eqref{eq:E_avg_APBDW} with $M$ for perfect measurements, $N=5$, and three values of $R_{\rm width}$ in
\eqref{eq:update_generator_fluids}. We observe that for $R_{\rm width}=0.01$ the user-defined update corresponds to the variational update.
As for the previous test cases, the use of an user-defined update strongly improves performance,  particularly for moderate-to-large values of $M$.
 
    \begin{figure}[h!]
\centering
\subfloat[] {\includegraphics[width=0.33\textwidth]
 {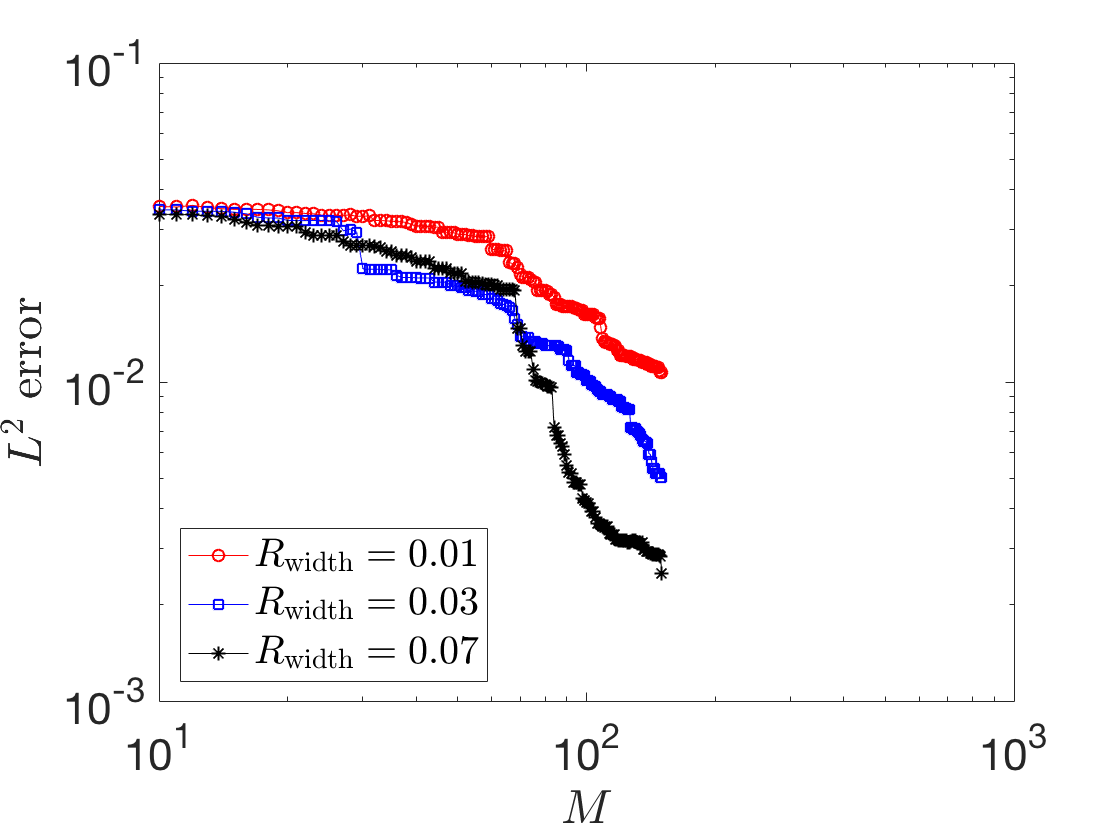}}
 ~
\subfloat[ ] {\includegraphics[width=0.33\textwidth]
 {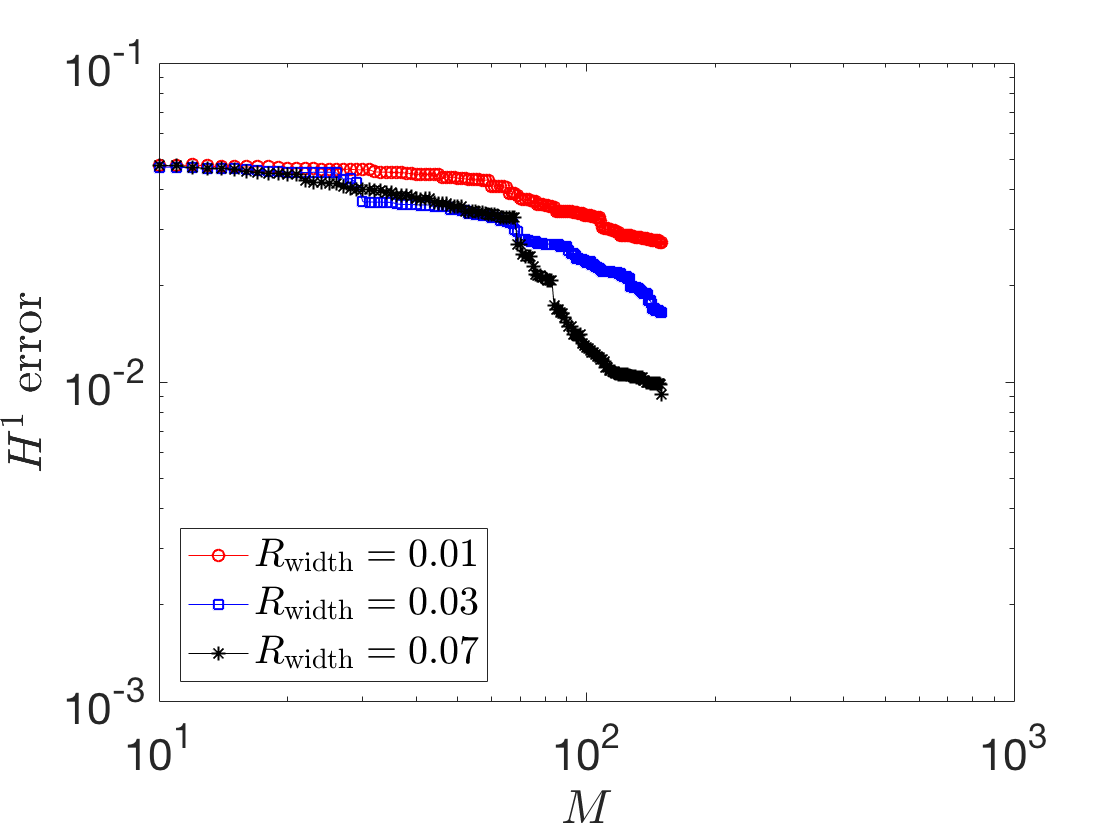}}
  
\caption{  $M$-convergence.
Behavior of the relative averaged error   $E_{\rm avg}^{\rm rel}$
over $n_{\rm test}=10$ different true fields in $\mathcal{M}^{\rm true}$.
($N=5$, $r_{\rm w}=0.01$, $tol=0.3$). }
 \label{fig:Mconv}
\end{figure}

 \section{Conclusions}
 \label{sec:conclusions}
 
In this paper, we presented a number of extensions  to the PBDW formulation for state estimation.
First, we proposed a Tikhonov regularization of the original PBDW statement for general linear functionals, which relies on holdout validation, to systematically deal with noisy measurements.
Second, we proposed user-defined update spaces, which guarantee rapid convergence with respect to the number of measurements $M$ and also might not require the solution to $M$ Riesz problems. 
Third, we presented an \emph{a priori} error analysis that provides insights into the role of the regularization hyper-parameter $\xi$ associated with the penalized formulation.

We identify a number of future research directions to improve the PBDW formulation  and extend its range of applications.
First, we wish to extend the formulation to different models for the experimental noise, and also to probabilistic background.
Towards this end, we wish to rely on the connection between PBDW and PSM established in \cite{taddei_APBDW}, and recapped in section \ref{sec:pb_statement}.
Second, we wish to extend the formulation to time-dependent problems.
This might be accomplished by exploiting a space-time variational formulation (\cite{urban2014improved})
to incorporate   the space-time structure of the evolution equations in the empirical
expansion.


\bibliographystyle{unsrt}
\bibliography{bib_folder/bib_MOR_RB,bib_folder/bib_MOR,bib_folder/bib_RBF,bib_folder/bib_data_assimilation,bib_folder/bib_statistics,bib_folder/bib_general_theory,bib_folder/bib_MOR_scrbe,bib_folder/bib_validation,bib_folder/bib_my_publications,bib_folder/bib_POD}


\end{document}